\newtheorem{theorem}{Theorem}
\newtheorem{lemma}[theorem]{Lemma}
\newtheorem{proposition}[theorem]{Proposition}
\theoremstyle{definition}
\newtheorem{definition}[theorem]{Definition}
\newtheorem{example}[theorem]{Example}
\theoremstyle{remark}
\newtheorem{remark}[theorem]{Remark}
\numberwithin{equation}{section}
\numberwithin{theorem}{section}
\title{Poisson Pseudoalgebras}
\author{Bojko Bakalov}
\address{Department of Mathematics,
North Carolina State University,
Raleigh, NC 27695, United States}
\email{bojko\_bakalov@ncsu.edu}
\author{Ju Wang}
\address{Department of Mathematics,
University of Lynchburg,
Lynchburg, VA 24501, United States}
\email{wang\_j@lynchburg.edu}
\date{July 30, 2023}
\subjclass[2010]{Primary 17B63; Secondary 17B65, 17B66, 18M70}
\keywords{Lie superalgebra; Lie pseudoalgebra; operad; Poisson vertex algebra}
\thanks{The first author was supported in part by a Simons Foundation grant 584741.}
\begin{document}

\begin{abstract}
For any cocommutative Hopf algebra $H$ and a left $H$-module $V$, we construct an operad $\mathcal{P}^{cl}_H(V)$, which in the special case when $H$ is the algebra of polynomials in one variable reduces to the classical operad $\mathcal{P}^{cl}(V)$ of \cite{BDHK19}. Morphisms from the Lie operad to $\mathcal{P}^{cl}(V)$ correspond to Poisson vertex algebra structures on $V$. Likewise, our operad $\mathcal{P}^{cl}_H(V)$ gives rise to the notion of a Poisson pseudoalgebra; thus extending the notion of a Lie pseudoalgebra from \cite{BDK01}. As a byproduct of our construction, we introduce two cohomology theories for Poisson pseudoalgebras, generalizing the variational and classical cohomology of Poisson vertex algebras.
\end{abstract}

\maketitle

\tableofcontents

\section{Introduction}

The notion of a Lie conformal algebra, introduced by Victor Kac \cite{K98}, provides an axiomatic description of the operator product expansion of chiral fields in conformal field theory,
and is closely related to infinite-dimensional Lie algebras such as affine Kac--Moody algebras and the Virasoro algebra.
Recall that a \emph{Lie conformal} (super)\emph{algebra} is a vector superspace $V$,
endowed with an even endomorphism $\partial\in\mathrm{End}(V)$
and a bilinear (over the ground field $\mathbb F$) $\lambda$-bracket
$[\cdot\,_\lambda\,\cdot] \colon V\times V\to V[\lambda]$
satisfying the following three axioms for all $a,b,c\in V$:
\begin{itemize}
\item
\textbf{Sesquilinearity:} 
\begin{equation}\label{sesquil}
[\partial a_\lambda b]=-\lambda[a_\lambda b] \,,
\quad
[a_\lambda \partial b]=(\lambda+\partial)[a_\lambda b]
\,,
\end{equation}
\item
\textbf{Skewsymmetry:} 
\begin{equation}\label{20170612:eq4}
[a_\lambda b]=-(-1)^{p(a)p(b)}[b_{-\lambda-\partial}a]
\,,
\end{equation}
\item
\textbf{Jacobi identity:} 
\begin{equation}\label{20170612:eq5}
[a_{\lambda}[b_\mu c]]-(-1)^{p(a)p(b)}[b_\mu [a_\lambda c]]
=[[a_\lambda b]_{\lambda+\mu}c]
\,,
\end{equation}
\end{itemize}
where $p(a)$ denotes the parity of $a$.

The theory of conformal algebras, their representations and cohomology has been developed in a series of papers; see e.g.\ \cite{DK98,CK97, CKW98, BKV99}.
The definition of conformal algebra can be generalized naturally to the case when $\lambda$ is a
vector of dimension $N$ (see \cite{BKV99}): one only needs to replace the single indeterminate
$\lambda$ with the vector $\vec\lambda = (\lambda_1,\dots,\lambda_N)$ and $\partial$ with
$\vec\partial = (\partial_1,\dots,\partial_N)$. This replacement is straightforward in \eqref{20170612:eq4} and \eqref{20170612:eq5}, while  sesquilinearity becomes coordinate-wise: 
\begin{align}\label{sesq-comp}
[\partial_i a_{\vec{\lambda}} b]&=-\lambda_i[a_{\vec{\lambda}} b]
\,,\quad
[a_{\vec{\lambda}} \partial_i b]=(\lambda_i+\partial_i)[a_{\vec{\lambda}} b]
\,, \quad 1\le i\le N \,. \\
[a_{\vec{\lambda}} b] &= -(-1)^{p(a)p(b)}[b_{-\vec{\lambda}-\vec{\partial}}a], \label{skewsymN} \\
[a_{\vec{\lambda}}[b_{\vec{\mu}}c]] &= [[a_{\vec{\lambda}}b]_{\vec{\lambda}+\vec{\mu}}c]+(-1)^{p(a)p(b)}[b_{\vec{\mu}}[a_{\vec{\lambda}}c]]. \label{jacN}
\end{align}


Taking this generalization a step further, one arrives at the notion of a \emph{Lie} (super)\emph{pseudo\-algebra} over a cocommutative Hopf algebra $H$, which was introduced in \cite{BDK01}. To be more concise, from now on, we will omit the prefix ``super" where its meaning is clear from the context. Recall that a Lie $H$-pseudoalgebra $L$ is a left $H$-module endowed with an even linear map 
\begin{equation}\label{eq:introbeta}
\beta \colon L \otimes L \to (H \otimes H) \otimes_H L \,,
\quad \beta(a\otimes b) = [a*b] \,,
\end{equation}
which is called a \emph{pseudobracket} and is subject to the following three axioms. For $a,b,c \in L$, $f,g \in H$, and $\sigma = (12) \in S_2$, we have:
\begin{itemize}
    \item \textbf{$H$-bilinearity:} \begin{align}[fa*gb] = ((f \otimes g) \otimes _H 1)[a*b],\end{align} 
    \item \textbf{Skewsymmetry:} \begin{align}[b*a] = -(-1)^{p(a)p(b)}(\sigma \otimes _H \text{id})[a*b],\label{skewsym1}\end{align}
    \item \textbf{Jacobi identity:} \begin{align}[a*[b*c]] - (-1)^{p(a)p(b)}((\sigma \otimes \text{id}) \otimes_H \text{id})[b*[a*c]] = [[a*b]*c].\label{Jacobi1}\end{align}
\end{itemize}

The details on how to compose pseudobrackets in (\ref{Jacobi1}) are given in Remark \ref{bracketcomp} below.
Unless otherwise specified, we will work over a field $\mathbb F$ of characteristic $0$ and the Hopf algebra $H$ will be cocommutative. With the above definition, a Lie conformal algebra is the same as a Lie pseudoalgebra with $H = \mathbb{F}[\partial]$, while a Lie conformal algebra in dimension $N$ is just a Lie pseudoalgebra with $H = \mathbb F[\partial_1,\dots,\partial_N]$ (see \cite{BDK01}).

The simple Lie pseudoalgebras that are finitely generated as $H$-modules have been classified in \cite{BDK01}. They turn out to be closely related to the Lie--Cartan algebras of vector fields $W_N$, $S_N$, $H_N$, and $K_N$. The representation theory of simple Lie pseudoalgebras was developed in \cite{BDK06,BDK13,BDK21,D23}, while their cohomology is related to the Gelfand--Fuchs cohomology of the Lie--Cartan algebras of vector fields \cite{F86}.

In the discussion above, Lie conformal algebras are generalized to Lie pseudoalgebras. On the other hand, one can consider the Poisson type algebras of Lie conformal algebras, which are called Poisson vertex algebras \cite{FB04, L04}.
Recall that a \emph{Poisson vertex} (super)\emph{algebra} is a (super)commutative differential algebra 
endowed with a Lie conformal (super)algebra 
$\lambda$-bracket satisfying the 
\begin{itemize}
    \item \textbf{Leibniz rule:} 
\begin{equation}\label{eq:intro11}
[a_\lambda bc]
=
[a_\lambda b]c
+
(-1)^{p(b)p(c)}
[a_\lambda c]b
\,.
\end{equation}
\end{itemize}
Poisson vertex algebras have been studied extensively in recent years; see, e.g., \cite[Sect.\ 16]{BDK01} and \cite{BDK09, DK13, K15} for applications of Poisson vertex algebras to the integrability of Hamiltonian partial differential equations, and \cite{DK13, BDK20, BDK20v, BDHKV21} for the cohomology theory of Poisson vertex algebras and vertex algebras.

In this paper, we introduce the notion of a \emph{Poisson pseudoalgebra} as a Lie pseudoalgebra $V$ equipped with a (super)commutative associative product $V\otimes V\to V$, which
is a homomorphism of $H$-modules and satisfies the following generalization of \eqref{eq:intro11}:
\begin{itemize}
    \item \textbf{Leibniz rule:} 
\begin{align}\label{leibniz1}
    [a*bc] = [a*b]c + (-1)^{p(b)p(c)}
    [a*c]b,
\end{align}
\end{itemize}
where the product $[a*b]c$ is defined in \eqref{interaction} below.

We remark that the skewsymmetry \eqref{skewsym1} in a Lie pseudoalgebra looks more symmetric than the skewsymmetry \eqref{20170612:eq4} or \eqref{skewsymN}
in a Lie conformal algebra. Likewise, in a Poisson vertex algebra, there is a right version of the Leibniz rule, which looks more complicated than the left one
(cf.\ \cite[(1.26)]{BDK09}). For Poisson pseudoalgebras, we derive a \emph{right Leibniz rule} very similar to \eqref{leibniz1}:
\begin{align}\label{leibnizr1}
    [ab*c] = a[b*c]+(-1)^{p(a)p(b)}b[a*c],
\end{align}
where the product $a[b*c]$ is defined in \eqref{right1} below. 
This allowed us to find an \emph{iterated Leibniz rule} \eqref{iterleib} for the pseudobracket of any two products (cf.\ \cite[(1.34)]{BDK09}).
Using \eqref{iterleib}, we show that the symmetric algebra $S(L)$ of any Lie pseudoalgebra $L$ has a canonical structure of a Poisson pseudoalgebra,
just as in the usual case of Lie superalgebras (corresponding to $H=\mathbb F$). Then, in Sect.\ \ref{Poisson pseudoalgebra4examples}, we present several examples of Poisson pseudoalgebras, which generalize examples from \cite{BDK20} and \cite{BDK01}.

The paper \cite{BDHK19} developed a unified approach to Lie superalgebras, Lie conformal algebras, Poisson vertex algebras, vertex algebras, and their cohomology theories. 
The main idea, due to Beilinson and Drinfeld \cite{BD04}, is to view all these different algebras as Lie algebras in certain pseudotensor categories. A \emph{pseudotensor category} is equipped with notions of $n$-linear maps for all $n\ge1$ that can be composed and have actions of the symmetric groups $S_n$. This allows one to define the notions of a Lie algebra,
module, and cohomology; see \cite{BDK01} where these ideas were developed for Lie pseudoalgebras.

For any fixed object $V$ in a pseudotensor category, the spaces $\mathcal{P}(V)(n)$ of $n$-linear maps from $V$ to $V$ form
an \emph{operad}, a notion that originated in algebraic topology in the works of Boardman--Vogt and May in the early 1970's. Since then, operads have been used extensively in algebra and
mathematical physics; see \cite{MSS02,LV12} for modern reviews. In the language of operads, an object $V$ in a pseudotensor category has a Lie algebra structure if and only if
there is a morphism of operads from the so-called Lie operad $\mathcal{L}ie$ to the operad $\mathcal{P}(V)$.

Another equivalent formulation, which is more suitable for introducing cohomology, goes back to \cite{NR67} and was developed in \cite{DK13, BDHK19}.
As a first step, to any operad $\mathcal{P}$, one assigns a $\mathbb Z$-graded Lie superalgebra
\begin{align}
W(\mathcal{P}) = \bigoplus_{n \geq -1} W_n(\mathcal{P}) \,,
\end{align}
where $W_n(\mathcal{P})$ is the set of $S_{n+1}$-invariant elements in $\mathcal{P}(n+1)$ (see \cite{T02,BDHK19} and Sect.\ \ref{UniversalLie} below).
Next, we replace $\mathcal{P}(V)$ with the operad $\mathcal{P}(\Pi V)$, where $\Pi V$ is the same vector superspace as $V$ but with reversed parity.
Then an operad morphism $\mathcal{L}ie \to \mathcal{P}(V)$ is equivalent to an odd element $X \in W_1(\mathcal{P}(\Pi V))$ such that $[X,X] =0$.
Moreover, $[X,X] =0$ implies $\mathrm{ad}_X^2 = 0$; hence, $W(\mathcal{P}(\Pi V))$ becomes a cohomology complex with the differential $\mathrm{ad}_X$.

Let us fix a left module $V$ over a cocommutative Hopf algebra $H$.
The operad corresponding to Lie $H$-pseudoalgebra structures on $V$ via the above construction is given by \cite[Sect.\ 3]{BDK01}:
\begin{align}\label{p*hn}
\mathcal{P}_H^*(n) = \mathrm{Hom}_{H^{\otimes n}} (V^{\otimes n}, H^{\otimes n} \otimes_H V) \,.
\end{align}
In the case when $H = \mathbb{F}[\partial]$, this operad is also known as $\mathcal{C}hom$ or the \emph{conformal Hom} (see \cite{DK13,BDHK19}).
Again for $H = \mathbb{F}[\partial]$, the paper \cite{BDHK19} constructs extensions $\mathcal{P}^{cl}$ and $\mathcal{P}^{ch}$ of $\mathcal{C}hom$, called the \emph{classical}
and \emph{chiral operads}, which correspond to Poisson vertex algebras and vertex algebras, respectively.
The classical operad $\mathcal{P}^{cl}$ consists of certain linear maps labeled by acyclic graphs, so that for graphs with $n$ vertices and no edges they reduce to
maps from $\mathcal{C}hom(n)$. Note that, when $H = \mathbb{F}$, the operad $\mathcal{P}_H^*$ is the $\mathcal{H}om$ operad (also known as $\mathcal{E}nd$)
defined by $\mathcal{H}om(n) = \mathrm{Hom}(V^{\otimes n},V)$.
In this case, the operad $\mathcal{P}^{cl}$ has an analogue
$\mathcal{P}^{fn}$ called the \emph{finite classical operad} \cite[Sect.\ 10.5]{BDHK19}.
The operads $\mathcal{H}om$ and $\mathcal{P}^{fn}$ correspond to Lie superalgebras and Poisson superalgebras, respectively.

In this paper, we generalize the classical operad $\mathcal{P}^{cl}$ and its finite version $\mathcal{P}^{fn}$ to the case of an arbitrary cocommutative Hopf algebra $H$. At the same time, our \emph{generalized classical operad} $\mathcal{P}^{cl}_H$ contains $\mathcal{P}_H^*$ when restricted to graphs with no edges.
We prove that Poisson pseudoalgebra structures on an $H$-module $V$ are in bijection with odd elements $X \in W_1(\mathcal{P}^{cl}_H(\Pi V))$ such that $[X,X] =0$.
This result motivates our definition of a Poisson pseudoalgebra. As an application of the construction of the generalized classical operad, we obtain a cohomology complex,
called the \emph{classical cohomology} complex of $V$. Recall that, in the Poisson vertex algebras case ($H = \mathbb{F}[\partial]$), there is another cohomology theory called
the \emph{variational cohomology} \cite{DK13}, which is related to the classical one \cite{BDHK19,BDHKV21}. We also introduce the variational cohomology of Poisson pseudoalgebras,
but the main theorem of \cite{BDHKV21}, which asserts the isomorphism of the two cohomology theories under certain conditions, remains beyond the scope of the present paper.

The paper is organized as follows. In Sect.\ \ref{sec2}, we review some preliminaries including the definition of an operad, its reformulation in terms of $\circ$-products, the universal Lie superalgebra associated to an operad, and the cooperad of graphs. In Sect.\ \ref{sec3}, we introduce the generalized classical operad $\mathcal{P}^{cl}_H$ and prove that it is indeed an operad. In Sect.\ \ref{sec4}, we define the notion of a Poisson pseudoalgebra, establish its relation to the generalized classical operad, provide examples of Poisson pseudoalgebras, and introduce the classical and variational cohomology of Poisson pseudoalgebras.

\section{Preliminaries on Operads}\label{sec2}

In this section, we review the definition and some key properties of operads. In particular, we formulate the compositions in an operad in terms of $\circ$-products. 
We also recall the universal Lie superalgebra associated to an operad, introduced in \cite{T02} (see also \cite{BDHK19}).
For more detailed reviews on operads, we refer the readers to \cite{MSS02,LV12}.


\subsection{Axioms defining an operad} \label{defofoperad}

Recall from \cite[Sect.\ 3.1]{BDHK19}
that a (linear, unital, symmetric) (super)\emph{operad} consists of a sequence $\mathcal{P}(n)$ $(n \geq 0)$ of vector superspaces, with parity denoted by $p$,
equipped with the following operations and subject to the following axioms.

\begin{itemize}
    \item \textbf{Compositions.} For $n \geq 1$ and $m_1, \dots, m_n \geq 0$, we have parity preserving linear maps
\begin{align}
\mathcal{P}(n) \otimes \mathcal{P}(m_1) \otimes\dots \otimes \mathcal{P}(m_n) &\rightarrow \mathcal{P}(M_n),
\qquad M_n := m_1 + \dots+ m_n, \notag \\
f \otimes g_1 \otimes \dots \otimes g_n &\mapsto f(g_1\otimes \dots \otimes g_n) \,. \label{gencom}
\end{align}
    \item \textbf{Associativity axiom.} The compositions satisfy:
\begin{align*}
f((g_1 \otimes \dots \otimes g_n)(h_1 \otimes \dots \otimes h_{M_n})) = (f(g_1 \otimes \dots \otimes g_n))(h_1 \otimes \dots \otimes h_{M_n}),
\end{align*}
for $h_j \in \mathcal{P}(l_j)$, $j = 1, \dots, M_n$, where in the left-hand side

    \begin{align*}
        &(g_1 \otimes \dots \otimes g_n)(h_1 \otimes \dots \otimes h_{M_n})\notag \\
        = & \pm g_1(h_1 \otimes \dots \otimes h_{M_1}) \otimes \dots \otimes g_n(h_{M_{n-1}+1} \otimes \dots \otimes h_{M_n}),
    \end{align*}
    with the Koszul--Quillen sign given by
    \begin{align*}
        \pm = (-1)^{\sum_{i<j, M_{i-1}<k\leq M_{i}}p(g_j)p(h_k)} \,,
    \end{align*}
    and $M_0:=0$, $M_k:= m_1+\cdots+m_k$.
    
    \item \textbf{Unity axiom.} There exists a unit $1 \in \mathcal{P}(1)$ such that:
    \begin{align}\label{unity}
        f(1 \otimes \dots \otimes 1) = 1(f) =f, \quad \text{for any} \; f \in \mathcal{P}(n).
    \end{align}

    \item \textbf{Permutation actions.}
    For each $n \geq 1$, there is a right action of the symmetric group $S_n$ on $\mathcal{P}(n)$:
    \begin{align*}
        \mathcal{P}(n) \times S_n \rightarrow \mathcal{P}(n) \,, \quad
        (f, \sigma) \mapsto f^\sigma .
    \end{align*}
    
    \item \textbf{Equivariance axiom.}
    For any $\sigma\in S_n$ and $\tau_1 \in S_{m_1}, \dots, \tau_n \in S_{m_n}$, we have:
    \begin{align}\label{equivariance}
        f^\sigma(g_1^{\tau_1} \otimes \dots \otimes g_n^{\tau_n}) = \bigl(f(\sigma(g_1 \otimes \dots \otimes g_n))\bigr)^{\sigma(\tau_1, \dots, \tau_n)},
    \end{align}
    where $\sigma(\tau_1, \dots, \tau_n) \in S_{M_n}$ is defined by \cite[(2.12)]{BDHK19}:
    \begin{align}\label{permutation comp}
        \sigma(\tau_1, &\dots, \tau_n)(M_{k-1}+i) := \tau_k(i) + \sum_{j=1}^{\sigma(k)-1} m_{\sigma^{-1}(j)} \,,
    \end{align}
    for $1\le k\le n$, $1\le i \le m_k$, and
    \begin{align}\label{permuvec}
        \sigma(g_1 \otimes \dots \otimes g_n) := \epsilon_g(\sigma)(g_{\sigma^{-1}(1)} \otimes \dots \otimes g_{\sigma^{-1}(n)}),
    \end{align}
    with the sign factor $\epsilon_g(\sigma)$ given again by the Koszul--Quillen rule:
    \begin{align}\label{sign1}
        \epsilon_g(\sigma) := \prod_{i<j \,:\, \sigma(i)> \sigma(j)}(-1)^{p(g_i)p(g_j)}.
    \end{align}
\end{itemize}  


For simplicity, in the rest of the paper, we will use the term operad in place of superoperad.

\subsection{$\circ$-product formulation of the axioms}\label{circleprod}
For an operad $\mathcal{P}$, one can define the $\circ_i$-product as the insertion at the $i$-th position in the composition. 
More precisely, 
for $m\ge0$, $n\ge1$ and all $1\le i\le n$,
we define the linear maps
\begin{align}\label{circleprodgeneral}
    \circ_i\colon \mathcal{P}(n) \otimes \mathcal{P}(m) \rightarrow \mathcal{P}(n+m-1),\notag \\
    Y \circ_i X = Y(1 \otimes \dots \otimes 1 \otimes X \otimes 1 \otimes \dots \otimes 1),
\end{align}
 where $X$ is inserted at the $i$-th position in the tensor product above. 
 
Note that if all $\circ_i$-products are known, then one can recover the general compositions \eqref{gencom}.
Indeed, by the associativity axiom, we have:
\begin{align}\label{compcircle}
    Y(X_1 \otimes \dots \otimes X_n) = ( \cdots ((Y \circ_1 X_1)\circ_{M_1+1} X_2) \cdots )\circ_{M_{n-1}+1}X_n,
\end{align}
for $Y \in \mathcal{P}(n)$ and $X_k \in \mathcal{P}(m_k)$ $(1\le k \le n)$.
Hence, the axioms of an operad from the previous subsection can be formulated equivalently in terms of $\circ$-products
(see, e.g., \cite{MSS02,LV12}).

Explicitly, the associativity axiom is equivalent to the following identities:
\begin{align}\label{assoeq}
    (Z \circ_i Y) \circ_j X =
    \begin{cases}
    (-1)^{p(Y)p(X)}(Z \circ_j X) \circ_{i+m-1} Y \,, \quad &\text{if }\; 1 \leq j <i,\\
    Z \circ_i(Y \circ_{j-i+1} X) \,, \quad &\text{if }\; i \leq j <i+n,\\
    (-1)^{p(Y)p(X)}(Z \circ_{j-n+1} X) \circ_i Y \,, \quad &\text{if }\; i+n \leq j <n+l,
    \end{cases}
\end{align}
for $X \in \mathcal{P}(m)$, $Y \in \mathcal{P}(n)$ and $Z \in \mathcal{P}(l)$.
The unity axiom is equivalent to:
\begin{align}\label{unityeq}
    1 \circ_1 Y = Y \circ_i 1 = Y, \; \text{ for any } \; i = 1, \dots n,
\end{align}
and the equivariance axiom is equivalent to:
\begin{align}\label{equieq}
    Y^\sigma \circ_i X^\tau = (Y \circ_{\sigma(i)} X)^{\sigma \circ_i \tau}.
\end{align}
Here the $\circ_i$-products of permutations are defined similarly as above (cf. (\ref{permutation comp})):
\begin{align}\label{sigcircitau}
    \sigma \circ_i \tau = \sigma (1, \dots, 1,\tau,1, \dots, 1) \in S_{m+n-1},
\end{align}
for $\sigma \in S_n$ and $\tau \in S_m$,
where $\tau$ is inserted at the $i$-th position.

We remark that the third identity in \eqref{assoeq} is equivalent to the first one after flipping the equality.
Another observation is that the $\circ_1$-product is associative, which is obvious from \eqref{assoeq}.

\subsection{Universal Lie superalgebra associated to an operad}\label{UniversalLie}

With $\circ$-products from the last subsection, one can construct the universal Lie superalgebra associated to an operad \cite{T02,BDHK19}.

Let $\mathcal{P}$ be an operad. For $n \geq -1$, we define $W_n$ to be the set of all permutation invariant elements in $\mathcal{P}(n+1)$, that is
\begin{align*}
    W_n = \bigl\{f \in \mathcal{P}(n+1) \;\big|\; f^\sigma = f \;\; \forall\; \sigma \in S_{n+1} \bigr\},
\end{align*}
and form the $\mathbb{Z}$-graded vector superspace
\begin{align*}
W(\mathcal{P}) = \bigoplus_{n \geq -1} W_n \,.
\end{align*}
We define a product on $W(\mathcal{P})$ as follows:
\begin{align*}
    f \square g = \sum_{\sigma \in S_{m+1,n}}(f \circ_1 g)^{\sigma^{-1}} \in W_{n+m}, \qquad
    f \in W_n, \;\; g \in W_m,
\end{align*}
where $S_{m,n}$ is the subset of $S_{m+n}$ consisting of all $(m,n)$-shuffles:
\begin{align*}
    S_{m,n} = \bigl\{ \sigma \in S_{m+n} \; \big| \; \sigma(1)< \dots < \sigma(m), \; \sigma(m+1)< \dots < \sigma(m+n) \bigr\}.
\end{align*}

Here is a quick example, which will be useful later. For $f,g \in W_1$, their $\square$-product is:
\begin{align}\label{square1}
    f \square g & = \sum_{\sigma \in S_{2,1}}(f \circ_1 g)^{\sigma^{-1}} \notag \\
    & = f \circ_1 g + (f \circ_1 g)^{(23)^{-1}} + (f \circ_1 g)^{(123)^{-1}}\notag \\
    & = f \circ_1 g + (f \circ_1 g)^{(23)} + (f \circ_1 g)^{(132)}\notag \\
    & = f \circ_1 g + f \circ_2 g + (f \circ_2 g)^{(12)}.
\end{align}
The last equality follows from the equivariance axiom (\ref{equieq}) 
and the identities $(23) = (132)(12)$ and $(132) = (12) \circ_2 (1)$. 
Another example is when $f \in W_{-1}$; then $S_{m+1,-1} = \emptyset$ and hence $f \square g = 0$ is trivial. 

Now we define a bracket on $W(\mathcal{P})$ as the commutator bracket of the $\square$-product:
\begin{align}\label{bracket}
    [f,g] = f \square g -(-1)^{p(f)p(g)}g\square f.
\end{align}

\begin{theorem}[\cite{T02}, \cite{BDHK19}]
With the bracket given by \eqref{bracket}, $W(\mathcal{P})$ is a $\mathbb{Z}$-graded Lie superalgebra,  
called the \emph{universal Lie superalgebra} associated to the operad $\mathcal{P}$.
\end{theorem}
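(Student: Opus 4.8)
The plan is to establish the three defining properties of a $\mathbb{Z}$-graded Lie superalgebra in turn: that the bracket is well defined and compatible with the grading, that it is graded skewsymmetric, and that it obeys the graded Jacobi identity. Skewsymmetry is built into the definition \eqref{bracket} of $[\cdot,\cdot]$ as a graded commutator, and bilinearity over $\mathbb{F}$ is clear since the $\circ_i$-products, the permutation actions, and the shuffle sums are all $\mathbb{F}$-linear; so the real content lies in the remaining two points.

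First I would show closure, namely $f\square g\in W_{n+m}$ for $f\in W_n$, $g\in W_m$, which gives $[W_n,W_m]\subseteq W_{n+m}$. The degree is automatic because $f\circ_1 g\in\mathcal{P}(n+m+1)$, so the issue is $S_{n+m+1}$-invariance. Using the equivariance identity \eqref{equieq} together with $f^\sigma=f$ and $g^\tau=g$, one checks that $f\circ_1 g$ is already invariant under the Young subgroup $S_{m+1}\times S_n\subseteq S_{n+m+1}$ that permutes among themselves the first $m+1$ inputs (those fed into $g$) and the last $n$ inputs (those fed into the remaining slots of $f$). Since the $(m+1,n)$-shuffles $S_{m+1,n}$ are a complete set of coset representatives for this subgroup, the sum $\sum_{\sigma}(f\circ_1 g)^{\sigma^{-1}}$ is precisely the symmetrization of an $S_{m+1}\times S_n$-invariant element over $S_{n+m+1}/(S_{m+1}\times S_n)$, hence $S_{n+m+1}$-invariant by the standard averaging argument for right cosets. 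The only care needed is to track the Koszul--Quillen signs so that the reindexing of the averaging sum preserves them; the degenerate cases such as $n=-1$ are trivial as noted after \eqref{square1}.

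For the Jacobi identity I would follow the Nijenhuis--Richardson philosophy \cite{NR67} and prove that $\square$ is a (right-)pre-Lie product, i.e. that for $f\in W_n$, $g\in W_m$, $h\in W_l$ its associator
\[
 A(f,g,h):=(f\square g)\square h-f\square(g\square h)
\]
is graded symmetric in its last two arguments, $A(f,g,h)=(-1)^{p(g)p(h)}A(f,h,g)$. It is a classical fact that the graded commutator of any right-symmetric product is a Lie superbracket, so once this identity is in hand the graded Jacobi identity follows by a short formal computation. To prove right-symmetry I would expand each nested $\square$-product as a double sum of doubly iterated $\circ_1$-insertions over pairs of shuffles, and then apply the associativity relations \eqref{assoeq}: the middle case rewrites the terms in which $h$ lands inside $g$ as $f\square(g\square h)$, while the first and third cases rewrite the terms in which $g$ and $h$ are inserted into disjoint slots of $f$ and produce exactly the sign $(-1)^{p(g)p(h)}$ under interchange. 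Reindexing the shuffle sums and absorbing the residual permutations with the $S_{n+1}$-, $S_{m+1}$-, and $S_{l+1}$-invariance of $f$, $g$, $h$ then matches the two sides.

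I expect the combinatorial and sign bookkeeping in this last step to be the main obstacle. The substantive task is to exhibit the bijection between the pairs of shuffles appearing on the two sides of the right-symmetric identity and to verify that every sign produced by \eqref{assoeq} and by the permutation actions agrees with the Koszul--Quillen signs built into the definition of $\square$ and into the claimed symmetry relation; the operad axioms guarantee that such a matching exists, but making it explicit is where the work concentrates.
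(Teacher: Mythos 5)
The paper itself does not prove this theorem—it cites \cite{T02} and \cite{BDHK19}—so the comparison is against the standard proof in those references, and your proposal reproduces exactly that argument: closure of $\square$ by observing that $f\circ_1 g$ is invariant under the Young subgroup $S_{m+1}\times S_n$ (via \eqref{equieq} and the invariance of $f$ and $g$) and then averaging over the shuffle coset representatives $S_{m+1,n}$; right-symmetry of the associator of $\square$ (the pre-Lie property) via the three cases of \eqref{assoeq}; and the classical Gerstenhaber-type fact that the graded commutator of a right pre-Lie product satisfies the graded Jacobi identity. Your outline is correct and takes essentially the same route as the paper's cited sources.
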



\subsection{The cooperad of $n$-graphs}\label{ngraphs}

In this subsection, we review some properties of graphs from \cite{BDHK19} that are necessary ingredients of the new operad we will introduce later. 

For $n \geq 1$, an \emph{$n$-graph} $\Gamma$ is a set of \emph{vertices} $V(\Gamma)$ labeled from $1$ to $n$ and a collection of oriented \emph{edges} $E(\Gamma)$ between them. Let $G(n)$ denote all graphs without tadpoles (edges that start and end at the same vertex), and $G_0(n)$ be the set of all acyclic $n$-graphs, which are graphs in $G(n)$ with no (unoriented) cycles, including multiple edges. By convention, we let $G_0(0) = G(0) = \{\emptyset\}$ be the set consisting of a single element, the empty graph $\emptyset$ with no vertices.

\begin{example}\label{exg012}
For $n=1$, the only graph in $G_0(1) = G(1) =\{\bullet\}$ is a single vertex with no edges. When $n = 2$, there are $3$ graphs $\Gamma$ in $G_0(2)$$:$
\begin{figure}[h]
     \begin{subfigure}[b]{0.3\textwidth}
          \centering
          \resizebox{1.8cm}{!}{
          \begin{tikzpicture}
          \tikzset{>={Latex[width=2mm,length=2mm]}}
          \tikzstyle{vertex} =[circle, fill = black,inner sep=0pt,minimum size=2.4mm,scale=1]
          \node[vertex,label=below:$1$](v1) at (1,0){};
          \node[vertex,label=below:$2$](v2) at (2,0){};
          \end{tikzpicture}
          }  
          \caption{$E(\Gamma)=\emptyset$}
          \label{fig:A2}
     \end{subfigure}
     \begin{subfigure}[b]{0.3\textwidth}
          \centering
          \resizebox{1.8cm}{!}{\begin{tikzpicture}
          \tikzset{>={Latex[width=2mm,length=2mm]}}
          \tikzstyle{vertex} =[circle, fill = black,inner sep=0pt,minimum size=2mm,scale=1]
          \node[vertex,label=below:$1$](v1) at (1,0){};
          \node[vertex,label=below:$2$](v2) at (2,0){};
          \draw [->,thick] (v1) to (v2);
          \end{tikzpicture}}  
          \caption{$E(\Gamma)=\{1 \rightarrow 2\}$}
          \label{fig:B2}
     \end{subfigure}
     \begin{subfigure}[b]{0.3\textwidth}
          \centering
          \resizebox{1.8cm}{!}{\begin{tikzpicture}
          \tikzset{>={Latex[width=2mm,length=2mm]}}
          \tikzstyle{vertex} =[circle, fill = black,inner sep=0pt,minimum size=2mm,scale=1]
          \node[vertex,label=below:$1$](v1) at (1,0){};
          \node[vertex,label=below:$2$](v2) at (2,0){};
          \draw [->,thick] (v2) to (v1);
          \end{tikzpicture}}  
          \caption{$E(\Gamma)=\{2 \rightarrow 1\}$}
          \label{fig:C2}
     \end{subfigure}
 \end{figure}

 \noindent
 For $n=3$, the graphs in $G_0(3)$ are the following, with any combination of directions on the edges$:$
 
 \begin{figure}[h]
     \begin{subfigure}[b]{0.24\textwidth}
          \centering
          \resizebox{1.8cm}{!}{
          \begin{tikzpicture}
          \tikzstyle{vertex} =[circle, fill = black,inner sep=0pt,minimum size=2.35mm,scale=1.7]
          \node[vertex,label=below:$1$](v1) at (1,0){};
          \node[vertex,label=below:$2$](v2) at (2,0){};
          \node[vertex,label=below:$3$](v3) at (3,0){};
          \end{tikzpicture}
          }  
          \caption{}
          \label{fig:A3}
     \end{subfigure}
     \begin{subfigure}[b]{0.24\textwidth}
          \centering
          \resizebox{1.8cm}{!}{\begin{tikzpicture}
          \tikzstyle{vertex} =[circle, fill = black,inner sep=0pt,minimum size=2mm,scale=1.7]
          \node[vertex,label=below:$1$](v1) at (1,0){};
          \node[vertex,label=below:$2$](v2) at (2,0){};
          \node[vertex,label=below:$3$](v3) at (3,0){};
          \draw [-,thick] (v1) to (v2);
          \end{tikzpicture}}  
          \caption{}
          \label{fig:B3}
     \end{subfigure}
     \begin{subfigure}[b]{0.24\textwidth}
          \centering
          \resizebox{1.8cm}{!}{\begin{tikzpicture}
          \tikzstyle{vertex} =[circle, fill = black,inner sep=0pt,minimum size=2mm,scale=1.7]
          \node[vertex,label=below:$1$](v1) at (1,0){};
          \node[vertex,label=below:$2$](v2) at (2,0){};
          \node[vertex,label=below:$3$](v3) at (3,0){};
          \draw [-,thick] (v2) to (v3);
          \end{tikzpicture}}  
          \caption{}
          \label{fig:C3}
     \end{subfigure}
     \begin{subfigure}[b]{0.24\textwidth}
          \centering
          \resizebox{1.8cm}{!}{\begin{tikzpicture}
          \tikzstyle{vertex} =[circle, fill = black,inner sep=0pt,minimum size=2mm,scale=1.7]
          \node[vertex,label=below:$1$](v1) at (1,0){};
          \node[vertex,label=below:$2$](v2) at (2,0){};
          \node[vertex,label=below:$3$](v3) at (3,0){};
          \draw [-,thick] (v1) to [out = 45, in = 135] (v3);
          \end{tikzpicture}}  
          \caption{}
          \label{fig:D3}
     \end{subfigure}\\~\\ 
     \begin{subfigure}[b]{0.24\textwidth}
          \centering
          \resizebox{1.8cm}{!}{\begin{tikzpicture}
          \tikzstyle{vertex} =[circle, fill = black,inner sep=0pt,minimum size=2mm,scale=1.7]
          \node[vertex,label=below:$1$](v1) at (1,0){};
          \node[vertex,label=below:$2$](v2) at (2,0){};
          \node[vertex,label=below:$3$](v3) at (3,0){};
          \draw [-,thick] (v1) to (v2);
          \draw [-,thick] (v2) to (v3);
          \end{tikzpicture}}  
          \caption{}
          \label{fig:E3}
     \end{subfigure}
     \begin{subfigure}[b]{0.24\textwidth}
          \centering
          \resizebox{1.8cm}{!}{\begin{tikzpicture}
          \tikzstyle{vertex} =[circle, fill = black,inner sep=0pt,minimum size=2mm,scale=1.7]
          \node[vertex,label=below:$1$](v1) at (1,0){};
          \node[vertex,label=below:$2$](v2) at (2,0){};
          \node[vertex,label=below:$3$](v3) at (3,0){};
          \draw [-,thick] (v1) to (v2);
          \draw [-,thick] (v1) to [out = 45, in = 135] (v3);
          \end{tikzpicture}}  
          \caption{}
          \label{fig:F3}
     \end{subfigure}
     \begin{subfigure}[b]{0.24\textwidth}
          \centering
          \resizebox{1.8cm}{!}{\begin{tikzpicture}
          \tikzstyle{vertex} =[circle, fill = black,inner sep=0pt,minimum size=2mm,scale=1.7]
          \node[vertex,label=below:$1$](v1) at (1,0){};
          \node[vertex,label=below:$2$](v2) at (2,0){};
          \node[vertex,label=below:$3$](v3) at (3,0){};
          \draw [-,thick] (v2) to (v3);
          \draw [-,thick] (v1) to [out = 45, in = 135] (v3);
          \end{tikzpicture}}  
          \caption{}
          \label{fig:G3}
     \end{subfigure}
 \end{figure}
\end{example}

For an $n$-tuple of positive integers $(m_1,\dots, m_n)$, and $M_i$ defined as in (\ref{gencom}), 
we have the following \emph{cocomposition} map \cite{BDHK19}: 
\begin{align}\label{cocomposition}
    \Delta^{m_1 \dots m_n} \colon G(M_n) &\rightarrow G(n) \times G(m_1) \times \dots \times G(m_n),\notag \\
    \Gamma &\mapsto \bigl( \Delta_0^{m_1 \dots m_n}(\Gamma),\Delta_1^{m_1 \dots m_n}(\Gamma),\dots, \Delta_n^{m_1 \dots m_n}(\Gamma) \bigr),
\end{align}
where:
\begin{enumerate}
    \item $\Delta_0^{m_1 \dots m_n}(\Gamma)$ is the graph obtained by clasping the $m_k$ vertices in group $k$ into a single vertex, for each $1 \leq k \leq n$;
    \item $\Delta_k^{m_1 \dots m_n}(\Gamma)$, for $1 \leq k \leq n$, is the subgraph of $\Gamma$ obtained by taking all $m_k$ vertices in group $k$ and all edges among them.
\end{enumerate}

\begin{example}\label{tengraph}
Consider the following graph $\Gamma \in G(10)$ and the partition of its vertices given by $(m_1,m_2,m_3,m_4) = (2,4,1,3)$$:$
\medskip

\begin{center}
\begin{tabular}{cc}
$\Gamma :$
\raisebox{-.5\height}{
\begin{tikzpicture}
\tikzset{>={Latex[width=2mm,length=2mm]}}
\tikzstyle{vertex} =[circle, fill = black,inner sep=0pt,minimum size=1mm,scale=2.5]

\node[vertex,label=below:$1$](v1) at (1,0){};
\node[vertex,label=below:$2$](v2) at (2,0){};
\node[vertex,label=below:$3$](v3) at (3,0){};
\node[vertex,label=below:$4$](v4) at (4,0){};
\node[vertex,label=below:$5$](v5) at (5,0){};
\node[vertex,label=below:$6$](v6) at (6,0){};
\node[vertex,label=below:$7$](v7) at (7,0){};
\node[vertex,label=below:$8$](v8) at (8,0){};
\node[vertex,label=below:$9$](v9) at (9,0){};
\node[vertex,label=below:$10$](v10) at (10,0){};

\draw [->,thick] (v1) to [out=45,in=135] (v4);
\draw [->,thick] (v2) to  (v3);
\draw [->,thick] (v4) to  (v5);
\draw [->,thick] (v5) to [out=-45,in=-135] (v8);
\draw [->,thick] (v6) to [out=45,in=135] (v10);
\draw [->,thick] (v8) to (v9);

\draw[dashed] (1.5,0) circle[radius=0.9cm];
\draw[dashed] (4.5,0) circle[radius=1.85cm];
\draw[dashed] (7,0) circle[radius=0.5cm];
\draw[dashed] (9,0) circle[radius=1.30cm];
\end{tikzpicture}
}
\end{tabular}

\end{center}
\medskip
where the dashed circles denote the groupings. Then, after relabeling the vertices so that they start from $1$, we have$:$

\medskip

\begin{center}
\begin{tabular}{cc}
$\Delta_0^{2413}(\Gamma) =$
\raisebox{-.45\height}{
\begin{tikzpicture}
\tikzset{>={Latex[width=2mm,length=2mm]}}
\tikzstyle{vertex} =[circle, fill = black,inner sep=0pt,minimum size=1mm,scale=2.5]

\node[vertex,label=below:$1$](v1) at (2,0){};
\node[vertex,label=below:$2$](v2) at (4,0){};
\node[vertex,label=below:$3$](v3) at (6,0){};
\node[vertex,label=below:$4$](v4) at (8,0){};

\draw [->,thick] (v1) to [out=45,in=135] (v2);
\draw [->,thick] (v1) to [out=-45,in=-135] (v2);
\draw [->,thick] (v2) to [out=45,in=135] (v4);
\draw [->,thick] (v2) to [out=-45,in=-135] (v4);
\end{tikzpicture}
}
\end{tabular}
\end{center}
\vspace{3mm}

\medskip
\begin{flushleft}
\begin{tabular}{cc}
\hspace{18mm}$\Delta_1^{2413}(\Gamma) =$
\raisebox{-.65\height}{
\begin{tikzpicture}
\tikzset{>={Latex[width=2mm,length=2mm]}}
\tikzstyle{vertex} =[circle, fill = black,inner sep=0pt,minimum size=1mm,scale=2.5]
\node[vertex,label=below:$1$](v1) at (2,0){};
\node[vertex,label=below:$2$](v2) at (4,0){};
\end{tikzpicture}
}
\end{tabular}
\end{flushleft}
\vspace{3mm}

\medskip
\medskip

\begin{center}
\begin{tabular}{cc}

$\Delta_2^{2413}(\Gamma) =$
\raisebox{-.68\height}{
\begin{tikzpicture}
\tikzset{>={Latex[width=2mm,length=2mm]}}
\tikzstyle{vertex} =[circle, fill = black,inner sep=0pt,minimum size=1mm,scale=2.5]

\node[vertex,label=below:$1$](v1) at (2,0){};
\node[vertex,label=below:$2$](v2) at (4,0){};
\node[vertex,label=below:$3$](v3) at (6,0){};
\node[vertex,label=below:$4$](v4) at (8,0){};
\draw [->,thick] (v2) to (v3);
\end{tikzpicture}
}
\end{tabular}
\end{center}
\vspace{3mm}

\medskip
\begin{flushleft}
\begin{tabular}{cc}
\hspace{18mm}$\Delta_3^{2413}(\Gamma) =$
\raisebox{-.68\height}{
\begin{tikzpicture}
\tikzset{>={Latex[width=2mm,length=2mm]}}
\tikzstyle{vertex} =[circle, fill = black,inner sep=0pt,minimum size=1mm,scale=2.5]

\node[vertex,label=below:$1$](v1) at (2,0){};
\end{tikzpicture}
}
\end{tabular}
\end{flushleft}
\vspace{3mm}

\medskip
\begin{flushleft}
\begin{tabular}{cc}
\hspace{18mm}$\Delta_4^{2413}(\Gamma) =$
\raisebox{-.68\height}{
\begin{tikzpicture}
\tikzset{>={Latex[width=2mm,length=2mm]}}
\tikzstyle{vertex} =[circle, fill = black,inner sep=0pt,minimum size=1mm,scale=2.5]

\node[vertex,label=below:$1$](v1) at (2,0){};
\node[vertex,label=below:$2$](v2) at (4,0){};
\node[vertex,label=below:$3$](v3) at (6,0){};
\draw [->,thick] (v1) to (v2);
\end{tikzpicture}
}
\end{tabular}
\end{flushleft}
\vspace{3mm}
\end{example}

Another notion we will need is the external connectedness defined below.

\begin{definition}[\cite{BDHK19}]\label{externalconn}
For an $n$-tuple of positive integers $(m_1,\dots,$ $m_n)$, let $M_i$ be defined as in \eqref{gencom}, and $\Gamma \in G(M_n)$. For a vertex $k \in \{1,\dots, M_n\}$ and a group $j \in \{1,\dots, n\}$, suppose that $k$ is in group $i$, i.e., $M_{i-1}+1 \leq k \leq M_i$. We say that $j$ is \emph{externally connected} to $k$ if there exists an unoriented path (without repeating edges) between $j$ and $i$ in $\Delta_0^{m_1 \dots m_n}(\Gamma)$ such that the edge connecting $i$ is the image under $\Delta$ of an edge in $\Gamma$ that starts or ends at $k$. 
\end{definition}

The set of all $j$'s that are externally connected to $k$ is denoted by $\mathcal{E}(k)$, a subset of $\{1,\dots,n\}$. Given a set of variables $x_1,\dots,x_n$, we define:
\begin{align}\label{excon}
    X(k) = \sum_{j \in \mathcal{E}(k)}x_j \,.
\end{align}

\begin{example}
For the graph $\Gamma \in G(10)$ in {\normalfont{Example \ref{tengraph}}}, we have:
\begin{align*}
    X(7)=X(9)=0, \quad X(k)=x_1 + x_2 + x_4 \;\text{ for all other $k \in \{1,\dots,10\}$}.
\end{align*}
\end{example}

Now we state the properties of the cocomposition map of $n$-graphs, which will be needed in Sect.\ \ref{sec3} below. In fact, $(G(n),\Delta)$ defines a cooperad, which is a dual notion of an operad \cite{LV12}, but we only focus on its properties here.

\begin{lemma}[\cite{BDHK19}]\label{edgecorrespondence}
For any positive integers $m_1, \dots, m_n$, there is a natural bijection
\begin{align*}
    \Delta\colon E(\Gamma)\rightarrow E(\Delta_0^{m_1 \dots m_n}(\Gamma)) \sqcup E(\Delta_1^{m_1 \dots m_n}(\Gamma)) \sqcup \dots \sqcup E(\Delta_n^{m_1 \dots m_n}(\Gamma)).
\end{align*}
\end{lemma}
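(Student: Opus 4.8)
The plan is to define the map $\Delta$ explicitly by tracking where each edge of $\Gamma$ is sent under the cocomposition, and then to verify bijectivity by partitioning $E(\Gamma)$ according to which groups contain the endpoints of each edge. The key observation is that every edge $e \in E(\Gamma)$ has its two endpoints in well-defined groups, say groups $i$ and $j$ (where group $k$ consists of the vertices $M_{k-1}+1, \dots, M_k$), and exactly one of two mutually exclusive situations occurs: either $i = j$, in which case $e$ is \emph{internal} to a single group, or $i \neq j$, in which case $e$ \emph{crosses} between two distinct groups.

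First I would partition $E(\Gamma)$ as the disjoint union of the sets $E_k$ of edges internal to group $k$ (for $1 \le k \le n$) together with the set $E_{\mathrm{cr}}$ of crossing edges. This partition is exhaustive and disjoint precisely because each vertex lies in exactly one group. On the internal part, I would send an edge in $E_k$ to the corresponding edge of the subgraph $\Delta_k^{m_1 \dots m_n}(\Gamma)$; by item (2) of the cocomposition \eqref{cocomposition}, $\Delta_k$ retains exactly the edges among the $m_k$ vertices of group $k$, so this restriction is already a bijection $E_k \to E(\Delta_k^{m_1 \dots m_n}(\Gamma))$, modulo the harmless relabeling of vertices to start from $1$. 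On the crossing part, I would send an edge of $E_{\mathrm{cr}}$ joining groups $i$ and $j$ to the edge of $\Delta_0^{m_1 \dots m_n}(\Gamma)$ joining the clasped vertices $i$ and $j$, with the orientation inherited from $e$.

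Bijectivity then follows piece by piece. Injectivity on each $E_k$ and on $E_{\mathrm{cr}}$ is clear, and edges landing in different summands cannot coincide since the target is a disjoint union; the one point that must be checked is that clasping preserves the multiplicity of crossing edges, so that two distinct edges of $E_{\mathrm{cr}}$ between the same pair of groups yield two distinct parallel edges in $\Delta_0$, as illustrated in Example \ref{tengraph}. Surjectivity is equally direct: every edge of $\Delta_k$ is by construction an internal edge of group $k$, and every edge of $\Delta_0$ arises by clasping a unique crossing edge of $\Gamma$.

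The step I expect to require the most care is the crossing part, specifically the claim that clasping neither creates nor destroys crossing edges and discards exactly the internal edges. Under clasping (item (1) of \eqref{cocomposition}), an internal edge of group $k$ would become a tadpole at the clasped vertex $k$; since the graphs in $G(n)$ carry no tadpoles, such edges are \emph{not} recorded in $\Delta_0$ but are instead captured by the subgraphs $\Delta_k$. This routing is exactly the mechanism making the right-hand side a genuine disjoint union rather than an overcounting: internal edges go to the $\Delta_k$'s and crossing edges to $\Delta_0$, with no overlap. Finally, the bijection is natural because the whole construction is expressed purely in terms of the incidence data of $\Gamma$ together with the fixed grouping, so it commutes with any isomorphism of $M_n$-graphs respecting the partition.
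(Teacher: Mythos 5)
Your proof is correct and follows essentially the same route as the paper's: the paper's (much terser) argument is exactly your decomposition of $E(\Gamma)$ into edges internal to a single group, which are recorded by the subgraphs $\Delta_k^{m_1\dots m_n}(\Gamma)$, and crossing edges, whose images under clasping are the edges of $\Delta_0^{m_1\dots m_n}(\Gamma)$. Your additional checks (preservation of multiplicities under clasping and the tadpole observation explaining why internal edges are routed to the $\Delta_k$'s rather than $\Delta_0$) are correct elaborations of details the paper leaves implicit.
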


\begin{proof}
This is true because an edge in $\Gamma$ is either contained in one of the $n$ subgraphs $\Delta_k^{m_1 \dots m_n}(\Gamma)$ $(1\le k\le n)$,
or it connects two different subgraphs. In the latter case, its image under $\Delta$ is in $\Delta_0^{m_1 \dots m_n}(\Gamma)$.
\end{proof}

\begin{lemma}[\cite{BDHK19}]\label{cyclelemma}
For an oriented cycle $C \in E(\Gamma)$ in an $n$-graph $\Gamma$, either $\Delta(C) \subset E(\Delta_k^{m_1 \dots m_n}(\Gamma))$ for some $1\le k\le n$ or $\Delta(C) \cap E(\Delta_0^{m_1 \dots m_n}(\Gamma))$ is an oriented cycle in $\Delta_0^{m_1 \dots m_n}(\Gamma)$.
\end{lemma}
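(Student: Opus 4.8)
The plan is to split the edges of the oriented cycle $C$ into two types according to Lemma \ref{edgecorrespondence}. Write $C$ as a cyclically ordered sequence of edges $e_1,\dots,e_s$, where $e_t$ runs from a vertex $v_{t-1}$ to $v_t$ with indices read modulo $s$ (so $v_s=v_0$), and let $g(v)\in\{1,\dots,n\}$ denote the group containing a vertex $v$. Call $e_t$ \emph{internal} if $g(v_{t-1})=g(v_t)$ and \emph{crossing} otherwise. By Lemma \ref{edgecorrespondence}, an internal edge in group $k$ has $\Delta(e_t)\in E(\Delta_k^{m_1\dots m_n}(\Gamma))$, whereas a crossing edge has $\Delta(e_t)\in E(\Delta_0^{m_1\dots m_n}(\Gamma))$. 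Thus $\Delta(C)\cap E(\Delta_0^{m_1\dots m_n}(\Gamma))$ is exactly the $\Delta$-image of the crossing edges of $C$, and the dichotomy in the statement is governed by whether or not $C$ has a crossing edge.

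First I would treat the case in which $C$ has no crossing edge. Then every consecutive pair $v_{t-1},v_t$ lies in one group, so traversing $C$ once from $v_0$ forces all of its vertices into a single group $k$. Every edge of $C$ is then internal to group $k$, giving $\Delta(C)\subset E(\Delta_k^{m_1\dots m_n}(\Gamma))$, which is the first alternative.

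In the remaining case $C$ has at least one crossing edge, and I would show that the crossing edges project to an oriented cycle in $\Delta_0^{m_1\dots m_n}(\Gamma)$. Reading off the groups visited as one goes around $C$, an internal edge keeps us at the same vertex $g(v_t)$ of $\Delta_0^{m_1\dots m_n}(\Gamma)$, while a crossing edge $e_t$ contributes the $\Delta_0$-edge $\Delta(e_t)$ from $g(v_{t-1})$ to $g(v_t)$. Hence listing the crossing edges in their cyclic order around $C$ yields a sequence of $\Delta_0$-edges in which the head of each is the tail of the next, and since $C$ closes up, so does this sequence. Therefore $\Delta(C)\cap E(\Delta_0^{m_1\dots m_n}(\Gamma))$ is a closed oriented walk, i.e.\ an oriented cycle in $\Delta_0^{m_1\dots m_n}(\Gamma)$, which is the second alternative.

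The main obstacle is the bookkeeping in this last case: one must check that deleting the internal edges really yields a single closed oriented walk rather than an unlinked collection of $\Delta_0$-edges. This reduces to the observation that any two consecutive crossing edges in the cyclic order are separated only by internal edges, all lying in one group and hence clasped to the same vertex of $\Delta_0^{m_1\dots m_n}(\Gamma)$, so that the associated $\Delta_0$-edges meet head-to-tail; closure of the walk is then inherited from the closure of $C$.
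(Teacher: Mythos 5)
Your proposal is correct and follows essentially the same route as the paper's proof: split according to whether the cycle $C$ stays inside one group (no crossing edges, giving the first alternative) or crosses between groups, in which case the crossing edges, read in cyclic order, close up to an oriented cycle in $\Delta_0^{m_1 \dots m_n}(\Gamma)$. The paper states this dichotomy in two sentences; you have merely filled in the head-to-tail bookkeeping that the paper leaves implicit.
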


\begin{proof}
If $C$ is contained in some $\Delta_k^{m_1 \dots m_n}(\Gamma)$, then $\Delta(C) \subset E(\Delta_k^{m_1 \dots m_n}(\Gamma))$. Otherwise, the set of edges from $C$ connecting different subgraphs in $\Gamma$, which is $\Delta(C) \cap E(\Delta_0^{m_1 \dots m_n}(\Gamma))$, gives an oriented cycle in $\Delta_0^{m_1 \dots m_n}(\Gamma)$.
\end{proof}


For positive integers $m_1, \dots, m_n$, let $M_i$ be defined as in \normalfont{(\ref{gencom})}, and $l_1, \dots, l_{M_n}$ be $M_n$ positive integers. For $i \in \{0,\dots, M_n\}$ and $j \in \{1, \dots, n\}$, let 
\begin{align*}
    L_i  = \sum_{n=1}^{i}l_n,\qquad
    K_j  = \sum_{m = M_{j-1}+1}^{M_j}l_m.
\end{align*}

\begin{proposition}[\textbf{Coassociativity} \cite{BDHK19}] \label{graphcoasso}
The cocomposition map $\Delta$, given by \eqref{cocomposition}, satisfies the following coassociativity conditions.
For a graph $\Gamma \in G_0(L_{M_n})$, we have$:$
\begin{enumerate}
     \item $\Delta_0^{m_1 \dots m_n}\bigl(\Delta_0^{l_1 \dots l_{M_n}}(\Gamma)\bigr) = \Delta_0^{K_1 \dots K_n}(\Gamma) \in G(n);$
     \smallskip
     
     \item $\Delta_i^{m_1 \dots m_n}\bigl(\Delta_0^{l_1 \dots l_{M_n}}(\Gamma)\bigr) = \Delta_0^{l_{M_{i-1}+1} \dots l_{M_i}}\bigl(\Delta_i^{K_1 \dots K_n}(\Gamma)\bigr) \in G(m_i)$, for $i = 1, \dots, n;$
     \smallskip
     
     \item $\Delta_{M_{i-1}+j}^{l_1 \dots l_{M_n}}(\Gamma) = \Delta_j^{l_{M_{i-1}+1} \dots l_{M_i}}\bigl(\Delta_i^{K_1 \dots K_n}(\Gamma)\bigr) \in G(l_{M_{i-1}+j})$, for $i = 1, \dots,n$ and $j =1,\dots, m_i$.
\end{enumerate}
\end{proposition}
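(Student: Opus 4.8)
The plan is to prove all three identities by a single method. Since every object here is a labeled graph, it suffices for each equation to check that the two sides have the same vertex set with matching labels and the same (multi)set of oriented edges. The structural observation that makes this manageable is that the two levels of grouping are \emph{nested}: writing $B_p=\{L_{p-1}+1,\dots,L_p\}$ for the finest block of $l_p$ vertices ($1\le p\le M_n$) and $C_i=B_{M_{i-1}+1}\cup\dots\cup B_{M_i}$ for the $i$-th super-block, the very definition of $K_i$ gives that $C_i$ has exactly $K_i$ vertices, so the coarse partition $\{C_1,\dots,C_n\}$ is obtained by merging consecutive finest blocks. Each of the maps $\Delta_0$ and $\Delta_k$ acts on a single edge of $\Gamma$ in a way depending only on which blocks its endpoints occupy: a clasping $\Delta_0$ keeps an edge, with relabeled endpoints, precisely when its endpoints lie in distinct blocks (intra-block edges become tadpoles and are dropped), while a restriction $\Delta_k$ keeps an edge precisely when both endpoints lie in block $k$. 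Thus each side of each identity is a prescription for which edges of $\Gamma$ survive and how their endpoints are relabeled, and the proof reduces to comparing these prescriptions.

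First I would settle the vertex bookkeeping. In identity (1) both sides produce a graph on $n$ vertices in which vertex $k$ corresponds to the super-block $C_k$: on the left, $\Delta_0^{l_1\dots l_{M_n}}$ first replaces each $B_p$ by a vertex $p$, and then $\Delta_0^{m_1\dots m_n}$ clasps $\{M_{k-1}+1,\dots,M_k\}$ (the vertices coming from $C_k$) into vertex $k$; on the right, $\Delta_0^{K_1\dots K_n}$ clasps $C_k$ directly. In identity (2) both sides give a graph on $m_i$ vertices in which vertex $j$ corresponds to the finest block $B_{M_{i-1}+j}\subseteq C_i$, and in identity (3) both sides are the induced subgraph of $\Gamma$ on the single block $B_{M_{i-1}+j}$. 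In each case the two sides apply the ``relabel to start from $1$'' convention in different orders, so the one point to verify is that the resulting relabeling bijections coincide; indexing everything by the finest partition $\{B_p\}$ throughout makes this transparent, since all intermediate labels are order-preserving reindexings of the same blocks.

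It then remains to match edges, for which I would invoke the natural edge bijection of Lemma~\ref{edgecorrespondence}. Identity (3) is immediate, as both sides are literally the induced subgraph on $B_{M_{i-1}+j}$ and hence retain exactly the edges of $\Gamma$ with both endpoints in that block. For identity (2), both sides retain exactly those edges of $\Gamma$ whose endpoints lie in two \emph{distinct} finest blocks $B_{M_{i-1}+j},B_{M_{i-1}+j'}$ inside $C_i$, relabeled to the edge between $j$ and $j'$; the left side is ``clasp then restrict'' and the right is ``restrict then clasp,'' and these agree because an edge interior to $C_i$ joining distinct finest blocks is kept, with the same endpoints, by both procedures. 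For identity (1), both sides should retain exactly the edges of $\Gamma$ joining two distinct super-blocks $C_i,C_{i'}$, relabeled to an edge between $i$ and $i'$. Here I expect the only genuine subtlety, and hence the main obstacle: an edge of $\Gamma$ joining two distinct finest blocks that happen to lie in the \emph{same} super-block. On the right such an edge is discarded at once by $\Delta_0^{K_1\dots K_n}$, whereas on the left it survives the first clasping $\Delta_0^{l_1\dots l_{M_n}}$ as an edge between distinct vertices and is killed only as a tadpole by the second clasping $\Delta_0^{m_1\dots m_n}$. To handle this cleanly I would compose the two instances of Lemma~\ref{edgecorrespondence} and check that the composite edge bijection for the iterated clasping agrees, block by block, with the single edge bijection for the coarse clasping $\Delta_0^{K_1\dots K_n}$, so that the surviving edges and their endpoints match and the intermediate tadpoles are consistently dropped. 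With this compatibility established, all three identities follow.
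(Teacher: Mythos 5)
Your proposal is correct, but there is nothing in the paper to compare it against: Proposition~\ref{graphcoasso} is stated with a citation to \cite{BDHK19} and no proof is reproduced in the text (unlike the neighboring Lemmas~\ref{edgecorrespondence} and \ref{cyclelemma}, which get short in-line arguments). Judged on its own merits, your blind proof is sound and complete in outline. The reduction to vertex bookkeeping plus edge bookkeeping is exactly the right frame: since both $\Delta_0$ (clasping) and $\Delta_k$ (induced subgraph) act edge-by-edge according to which blocks the endpoints occupy, and since the two partitions are nested ($\{B_p\}$ refines $\{C_i\}$ with $|C_i|=K_i$ by definition of $K_i$), each of the three identities reduces to checking that the two procedures keep the same edges with the same relabeled endpoints. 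Your identification of the one genuine subtlety in identity (1) --- an edge joining distinct finest blocks inside a single super-block, which survives the first clasping on the left but must be discarded by the second, versus being discarded immediately on the right --- is the correct place to focus, and your resolution via composing the edge bijections of Lemma~\ref{edgecorrespondence} works; this is also where the ``tadpole-free'' convention for $G(n)$ does real work. Your remark that all intermediate relabelings are order-preserving reindexings of the same blocks disposes of the labeling issue cleanly. Two small observations: your argument nowhere uses the acyclicity hypothesis $\Gamma\in G_0(L_{M_n})$, which is fine --- the identities are purely combinatorial and hold for all of $G(L_{M_n})$, with $G_0$ appearing in the statement only because that is the case used later --- and your edge-by-edge matching automatically handles multiple edges, which the definition of $G(n)$ permits.
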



The symmetric group $S_n$ $(n \geq 1)$ acts on the set of $n$-graphs $G(n)$ as follows. For a permutation $\sigma \in S_n$ and $\Gamma \in G(n)$, the new graph $\sigma\Gamma$ is defined by relabeling each vertex $i$ as vertex $\sigma(i)$ while not changing the edges. This action restricts to an action on $G_0(n)$. 

\begin{example}\label{permutegraph}
Suppose that $\sigma=(12)(354) \in S_5$ and $\Gamma \in G_0(5)$ is the graph below:
\medskip
\begin{center}
\begin{tabular}{cc}
$\Gamma = $
\raisebox{-.55\height}{
\begin{tikzpicture}
\tikzset{>={Latex[width=2mm,length=2mm]}}
\tikzstyle{vertex} =[circle, fill = black,inner sep=0pt,minimum size=1mm,scale=2.5]

\node[vertex,label=below:$1$](v1) at (2,0){};
\node[vertex,label=below:$2$](v2) at (4,0){};
\node[vertex,label=below:$3$](v3) at (6,0){};
\node[vertex,label=below:$4$](v4) at (8,0){};
\node[vertex,label=below:$5$](v5) at (10,0){};

\draw [->,thick] (v1) to  (v2);
\draw [->,thick] (v1) to [out=45,in=135] (v3);
\draw [->,thick] (v4) to [out=-135,in=-45] (v1);
\draw [->,thick] (v5) to (v4);
\end{tikzpicture}
}
\end{tabular}
\end{center}
\medskip
Then $\sigma\Gamma$ is the graph: 
\begin{center}
\begin{tabular}{cc}
$\sigma\Gamma = $
\raisebox{-.55\height}{
\begin{tikzpicture}
\tikzset{>={Latex[width=2mm,length=2mm]}}
\tikzstyle{vertex} =[circle, fill = black,inner sep=0pt,minimum size=1mm,scale=2.5]

\node[vertex,label=below:$2$](v2) at (2,0){};
\node[vertex,label=below:$1$](v1) at (4,0){};
\node[vertex,label=below:$5$](v5) at (6,0){};
\node[vertex,label=below:$3$](v3) at (8,0){};
\node[vertex,label=below:$4$](v4) at (10,0){};

\draw [->,thick] (v2) to  (v1);
\draw [->,thick] (v2) to [out=45,in=135] (v5);
\draw [->,thick] (v3) to [out=-135,in=-45] (v2);
\draw [->,thick] (v4) to (v3);
\end{tikzpicture}
}
\end{tabular}
\end{center}
\medskip
After rearranging the vertices from $1$ to $5$, we can draw the same graph as:
\begin{center}
\begin{tabular}{cc}
$\sigma\Gamma = $
\raisebox{-.22\height}{
\begin{tikzpicture}
\tikzset{>={Latex[width=2mm,length=2mm]}}
\tikzstyle{vertex} =[circle, fill = black,inner sep=0pt,minimum size=1mm,scale=2.5]
\node[vertex,label=below:$1$](v1) at (2,0){};
\node[vertex,label=below:$2$](v2) at (4,0){};
\node[vertex,label=below:$3$](v3) at (6,0){};
\node[vertex,label=below:$4$](v4) at (8,0){};
\node[vertex,label=below:$5$](v5) at (10,0){};

\draw [->,thick] (v2) to  (v1);
\draw [->,thick] (v2) to [out=45,in=135] (v5);
\draw [->,thick] (v3) to (v2);
\draw [->,thick] (v4) to (v3);
\end{tikzpicture}
}
\end{tabular}
\end{center}
\end{example}

The actions of the symmetric groups are compatible with the cocompositions, as expressed in the next statement.

\begin{proposition}[\textbf{Coequivariance} \cite{BDHK19}] \label{graphcoequivariance}
For any positive integers $m_1, \dots, m_n \; (n\ge1)$, permutations $\sigma \in S_n$, $\tau_i \in S_{m_i}$ $(i = 1,\dots, n)$, and a graph $\Gamma \in G_0(\sum_{i=1}^nm_i)$, we have
\begin{align*}
    & \Delta^{m_{\sigma^{-1}(1)}\dots m_{\sigma^{-1}(n)}}\bigl(\sigma(\tau_1,\dots, \tau_n) \Gamma\bigr) \notag \\ 
    = & \bigl( \sigma \Delta_0^{m_1\dots m_n}(\Gamma),\;\tau_{\sigma^{-1}(1)} \Delta_{\sigma^{-1}(1)}^{m_1\dots m_n}(\Gamma),\;\dots,\;\tau_{\sigma^{-1}(n)} \Delta_{\sigma^{-1}(n)}^{m_1\dots m_n}(\Gamma)\bigr),
\end{align*}
where $\sigma(\tau_1,\dots, \tau_n)$ is given by \eqref{permutation comp}.
\end{proposition}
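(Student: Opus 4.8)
The plan is to reduce the entire statement to a careful analysis of how the single permutation $\pi := \sigma(\tau_1,\dots,\tau_n) \in S_{M_n}$ relabels the vertices of $\Gamma$, and then to read off the effect of this relabeling on each of the $n+1$ components of the cocomposition $\Delta$. Throughout I would use that the $S_{M_n}$-action on graphs only relabels vertices while leaving the edges intact, so that there is a canonical bijection $E(\Gamma)\to E(\pi\Gamma)$ sending an edge with endpoints $a,b$ to the edge with endpoints $\pi(a),\pi(b)$; in particular $\pi\Gamma$ is again acyclic, so both sides of the asserted equality live in the same sets and it suffices to check equality graph by graph, with no signs to track.

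First I would record the basic block-transport property of $\pi$. Write $M_0=0$, $M_k=m_1+\dots+m_k$ for the old grouping, and $N_0=0$, $N_p=m_{\sigma^{-1}(1)}+\dots+m_{\sigma^{-1}(p)}$ for the new grouping whose block sizes $m_{\sigma^{-1}(1)},\dots,m_{\sigma^{-1}(n)}$ match the superscript on the left-hand side. I claim $\pi$ maps the $k$-th old block $\{M_{k-1}+1,\dots,M_k\}$ bijectively onto the $\sigma(k)$-th new block $\{N_{\sigma(k)-1}+1,\dots,N_{\sigma(k)}\}$, sending the $i$-th vertex $M_{k-1}+i$ to the $\tau_k(i)$-th vertex of that new block. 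This is immediate from \eqref{permutation comp}: since $\sum_{j=1}^{\sigma(k)-1} m_{\sigma^{-1}(j)}=N_{\sigma(k)-1}$, we get $\pi(M_{k-1}+i)=N_{\sigma(k)-1}+\tau_k(i)$, and as $i$ runs over $1,\dots,m_k$ the value $\tau_k(i)$ runs over all of $\{1,\dots,m_k\}=\{1,\dots,m_{\sigma^{-1}(\sigma(k))}\}$. Consequently, an edge of $\Gamma$ internal to old block $k$ becomes an edge of $\pi\Gamma$ internal to new block $\sigma(k)$, while an edge of $\Gamma$ joining old blocks $k\ne k'$ becomes an edge of $\pi\Gamma$ joining new blocks $\sigma(k)\ne\sigma(k')$.

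Now I would match the components one at a time. For the clasped graph, $\Delta_0^{m_{\sigma^{-1}(1)}\dots m_{\sigma^{-1}(n)}}(\pi\Gamma)$ collapses each new block to a vertex; since new block $\sigma(k)$ is exactly the $\pi$-image of old block $k$ and inter-block edges transform as above, Lemma \ref{edgecorrespondence} guarantees that no inter-block edge is created or destroyed, so this collapsed graph is $\Delta_0^{m_1\dots m_n}(\Gamma)$ with each clasped vertex $k$ relabeled as $\sigma(k)$, i.e. $\sigma\,\Delta_0^{m_1\dots m_n}(\Gamma)$, matching the first entry on the right. For the subgraphs, fix $p\in\{1,\dots,n\}$ and put $k=\sigma^{-1}(p)$; then new block $p$ is the $\pi$-image of old block $k$, and by the block-transport property the restriction of $\pi$ to this block, after normalizing indices to start at $1$, is exactly $\tau_k$. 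Hence $\Delta_p^{m_{\sigma^{-1}(1)}\dots m_{\sigma^{-1}(n)}}(\pi\Gamma)=\tau_k\,\Delta_k^{m_1\dots m_n}(\Gamma)=\tau_{\sigma^{-1}(p)}\,\Delta_{\sigma^{-1}(p)}^{m_1\dots m_n}(\Gamma)$, which is precisely the $p$-th subgraph entry on the right-hand side.

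I expect the main obstacle to be purely notational: keeping the two layers of offsets straight — the block reordering encoded by $\sigma$ through the sums $\sum_{j=1}^{\sigma(k)-1} m_{\sigma^{-1}(j)}$, versus the ``relabel so the vertices start from $1$'' normalization built into the definitions of $\Delta_0$ and $\Delta_k$. If the direct argument becomes unwieldy, a clean way to manage this is to factor $\pi=\sigma(1,\dots,1)\cdot(\tau_1\oplus\dots\oplus\tau_n)$ into a pure block-permutation and a block-diagonal permutation (one checks from \eqref{permutation comp} that applying $\tau_1\oplus\dots\oplus\tau_n$ and then $\sigma(1,\dots,1)$ reproduces $\pi$), verify the two special cases $\sigma=\mathrm{id}$ (internal relabelings only) and $\tau_i=\mathrm{id}$ (pure block reordering) separately, and then compose them using the left-action property of the $S_{M_n}$-action on graphs.
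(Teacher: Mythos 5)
The paper itself does not prove this proposition: it is quoted from \cite{BDHK19} (unlike Lemmas \ref{edgecorrespondence} and \ref{cyclelemma}, for which short proofs are supplied), so there is no in-paper argument to compare against, and your proposal must be judged on its own. It is correct and complete. The block-transport property is exactly the content of \eqref{permutation comp}: writing $N_p=m_{\sigma^{-1}(1)}+\dots+m_{\sigma^{-1}(p)}$, one has $\sum_{j=1}^{\sigma(k)-1}m_{\sigma^{-1}(j)}=N_{\sigma(k)-1}$, hence $\pi(M_{k-1}+i)=N_{\sigma(k)-1}+\tau_k(i)$, so $\pi$ carries the $k$-th old block onto the $\sigma(k)$-th new block (which indeed has size $m_{\sigma^{-1}(\sigma(k))}=m_k$). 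Since the $S_{M_n}$-action only relabels vertices, this immediately yields the two facts you need: edges internal to old block $k$ become edges internal to new block $\sigma(k)$, and after subtracting the offsets $M_{k-1}$ and $N_{\sigma(k)-1}$ the restriction of $\pi$ is exactly $\tau_k$, giving $\Delta_{\sigma(k)}^{m_{\sigma^{-1}(1)}\dots m_{\sigma^{-1}(n)}}(\pi\Gamma)=\tau_k\,\Delta_k^{m_1\dots m_n}(\Gamma)$; and edges between distinct old blocks $k\neq k'$ become edges between distinct new blocks $\sigma(k)\neq\sigma(k')$, giving $\Delta_0^{m_{\sigma^{-1}(1)}\dots m_{\sigma^{-1}(n)}}(\pi\Gamma)=\sigma\,\Delta_0^{m_1\dots m_n}(\Gamma)$. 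Two minor points: the appeal to Lemma \ref{edgecorrespondence} is superfluous (the internal/inter-block dichotomy is immediate from the definition of the blocks, and that is all you use); and the acyclicity hypothesis $\Gamma\in G_0$ plays no role in your argument, which proves the identity for all $\Gamma\in G(M_n)$ --- a harmless strengthening, consistent with how the statement is used later in the paper.
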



\section{Generalized Classical Operad $\mathcal{P}_{H}^{cl}$}\label{sec3}
\label{firstnewresult}

In this section, we present the main result of the paper, the construction of the generalized classical operad $\mathcal{P}_{H}^{cl}(V)$
for any cocommutative Hopf algebra $H$ and a left $H$-module $V$. 

\subsection{Notation for Hopf algebras}
First, we review several identities in Hopf algebras that will be useful later. 
Given a Hopf algebra $H$, we denote by $\Delta$ its coproduct, by $S$ the antipode, and $\epsilon$ the counit. 
We will extensively use a version of Sweedler's notation (cf.\ \cite{S69,BDK01}): 
\begin{align}
    \Delta(h) & = h_{(1)} \otimes h_{(2)}, \qquad h\in H, \label{eq3.1} \\
    (S \otimes \text{id})\Delta(h) & = h_{(-1)} \otimes h_{(2)}, \label{eq3.2} \\
    (\text{id} \otimes S)\Delta(h) & = h_{(1)} \otimes h_{(-2)}. \label{eq3.3} 
\end{align}
In this notation, the coassociativity of $\Delta$ is written as: 
\begin{align}\label{coas}
    (\Delta \otimes \text{id})\Delta(h) 
    &= (h_{(1)})_{(1)} \otimes (h_{(1)})_{(2)} \otimes h_{(2)} \\
    = (\text{id} \otimes \Delta)\Delta(h) 
    &= h_{(1)} \otimes (h_{(2)})_{(1)} \otimes (h_{(2)})_{(2)} \notag \\
    &= h_{(1)} \otimes h_{(2)} \otimes h_{(3)}, \notag
\end{align}
and the axioms of the antipode and counit as:
\begin{align}
    \epsilon(h) &= h_{(-1)}h_{(2)} = h_{(1)}h_{(-2)} \label{tool0},\\
    h &= \epsilon(h_{(1)})h_{(2)} = h_{(1)}\epsilon(h_{(2)}), \label{antipax}
\end{align}
A useful consequence from them are the identities
\begin{align}\label{tool1}
    h_{(-1)}h_{(2)} \otimes h_{(3)} = 1 \otimes h = h_{(1)}h_{(-2)} \otimes h_{(3)},\\
    h_{(1)} \otimes h_{(-2)}h_{(3)} = h \otimes 1 = h_{(1)} \otimes h_{(2)}h_{(-3)}.\label{identity1}
\end{align}

We define the \emph{iterated coproducts} $\Delta^{(n)} \colon H\to H^{\otimes (n+1)}$ 
inductively by
\begin{align}\label{itcop}
\Delta^{(1)} := \Delta, \quad 
\Delta^{(n)} := (\Delta^{(n-1)} \otimes \text{id})\Delta,
\qquad n\ge2,
\end{align}
and write
\begin{align}\label{itcopn}
\Delta^{(n-1)}(h) = h_{(1)} \otimes h_{(2)} \otimes\cdots\otimes h_{(n)}.
\end{align}
It will be convenient to extend the definition of $\Delta^{(n)}$ to $n=0,-1$ by letting
\begin{align}\label{itcopext}
\Delta^{(0)} := \text{id}, \qquad
\Delta^{(-1)} := \epsilon.
\end{align}
Then \eqref{antipax} implies that \eqref{itcop} holds for $n=0,1$ as well.

From now on, $H$ will be a \emph{cocommutative} Hopf algebra (which is purely even as a superspace), so that 
\begin{align}
h_{(1)} \otimes h_{(2)} = h_{(2)} \otimes h_{(1)}, \qquad h\in H.
\end{align}

\subsection{Definition of $\mathcal{P}_{H}^{cl}$}
\label{defofnewoperad}

Let $V$ be a vector superspace with parity $p$, which is also a left $H$-module. 
When $V$ is fixed, we will write simply $\mathcal{P}^{cl}_H$ instead of $\mathcal{P}_{H}^{cl}(V)$.
For a graph $\Gamma \in G(n)$, we will denote by $s(\Gamma)$ the number of connected components of $\Gamma$.
When $\Gamma$ is fixed, we will often write $s=s(\Gamma)$, and let $\Gamma_k$ be the $k$-th connected component of $\Gamma$,
so that $\Gamma = \Gamma_1 \sqcup \dots \sqcup \Gamma_s$.

An element $Y$ of $\mathcal{P}^{cl}_H(n)$ is defined to be a collection of linear maps 
\begin{align}\label{pclhdef}
    Y^{\Gamma} \colon V^{\otimes n} \rightarrow H^{\otimes s(\Gamma)} \otimes_H V, \qquad \Gamma \in G(n),
\end{align}
satisfying the following three axioms.

\begin{itemize}
    \item \textbf{First cycle condition:} 
$Y^{\Gamma} = 0$ for any graph $\Gamma$ that contains a cycle, i.e., $\Gamma \not\in G_0(n)$.

    \item \textbf{Second cycle condition:} 
        \begin{align}\label{cycle2}
            \sum_{e \in C}Y^{\Gamma \backslash e} =0,
        \end{align}
for any oriented cycle $C \subset E(\Gamma)$ of $\Gamma$.
    \item \textbf{Componentwise $H$-linearity.} 
For any $h\in H$, $v\in V^{\otimes n}$, and 
$k=1,\dots,s(\Gamma)$, we have:
\begin{align}\label{complinear}
    Y^{\Gamma}(h \cdot_{\Gamma_k} v) = (1 \otimes \dots \otimes 1 \otimes \underbrace{h}_k \otimes 1 \otimes \dots \otimes 1) \, Y^{\Gamma}(v) \,,
\end{align}
where in the the right-hand side, $h$ appears in the $k$-th position in the tensor product.
\end{itemize}
In the left-hand side of \eqref{complinear}, $h \cdot_{\Gamma_k} v$ denotes the action of $h$ on $v=v_1 \otimes \dots \otimes v_n$
obtained by acting via the iterated coproduct on the $v_i$'s for which $i$ belongs to the vertex set $V(\Gamma_k)$ of the $k$-th connected component
$\Gamma_k$ of $\Gamma$. Explicitly, if
\begin{align}\label{vgak}
V(\Gamma_k) = \{i_{1k},\dots,i_{n_k k} \} \subset V(\Gamma) = \{1,\dots,n \}, \qquad n_k:=|V(\Gamma_k)| \,,
\end{align}
then
\begin{align*}
    h \cdot_{\Gamma_k} v
    := (1 \otimes \dots \otimes h_{(1)} \otimes \dots \otimes h_{(n_k)} \otimes \dots \otimes 1)(v_1 \otimes \dots \otimes v_n),
\end{align*}
where $h_{(1)},\dots,h_{(n_k)}$ are placed in positions $i_{1k},\dots,i_{n_k k}$, respectively.

Note that, for $n=0$, our convention is that $\mathcal{P}^{cl}_H(0)$ consists of linear maps 
$Y\colon H^{\otimes 0} \to H^{\otimes 0} \otimes_H V$, which can be identified with elements of
\begin{align}\label{pclh0}
H^{\otimes 0} \otimes_H V = \mathbb{F} \otimes_H V \cong V/H_+ V \,,
\end{align}
where $H_+ := \mathrm{Ker}\,\epsilon$ is the augmentation ideal of $H$.

In order to define the structure of an operad on $\mathcal{P}^{cl}_H$, we need to describe the composition, unity, and the action of the symmetric groups $S_n$. 

\begin{itemize}
    \item \textbf{Unity} is the identity map $1 := \text{id}_V$, which is viewed as an element $1\in\mathcal{P}^{cl}_H(1)$ such that
    $1^\bullet\colon V \to H\otimes_H V \cong V$ is the identity on $V$ for the unique graph $\bullet\in G(1)$.
    
    \item \textbf{Permutation actions.} Let $Y\in\mathcal{P}^{cl}_H(n)$ and $\sigma \in S_n$. Recall that, for $\Gamma \in G(n)$, the graph $\sigma\Gamma$ is defined by relabeling the vertices by applying $\sigma$ while not changing the edges (cf.\ Example \ref{permutegraph}). This induces a permutation $\widetilde{\sigma} \in S_s$ of the connected components of $\Gamma$ (explained in more detail below). Then we define $Y^\sigma\in\mathcal{P}^{cl}_H(n)$ by
\begin{align}\label{snaction}
    (Y^\sigma)^\Gamma(v) := (\widetilde{\sigma} \otimes_H 1)\bigl(Y^{\sigma\Gamma}(\sigma v)\bigr),
    \qquad v\in V^{\otimes n}.
\end{align}
\end{itemize}

In the right-hand side of \eqref{snaction}, we are using the action of $S_n$ on $V^{\otimes n}$ defined by (cf.\ \eqref{permuvec}, \eqref{sign1}):
\begin{align}\label{permuvec2}
\sigma(v_1 \otimes \dots \otimes v_n) &:= \epsilon_v(\sigma) v_{\sigma^{-1}(1)} \otimes \dots \otimes v_{\sigma^{-1}(n)}, \\
\epsilon_v(\sigma) &:= \prod_{i<j \,:\, \sigma(i)> \sigma(j)}(-1)^{p(v_i)p(v_j)}. \label{sign3}
\end{align}
This is consistent with assigning $v_i$ to vertex $i$ of $\Gamma$ in $Y^\Gamma(v_1 \otimes \dots \otimes v_n)$, because
the $i$-th vertex of $\sigma\Gamma$ is vertex $\sigma^{-1}(i)$ in $\Gamma$ and it corresponds to the $i$-th factor $v_{\sigma^{-1}(i)}$
of $\sigma(v_1 \otimes \dots \otimes v_n)$.

Similarly, in \eqref{snaction}, we also have the action of $S_s$ on $H^{\otimes s}$ given by:
\begin{align}\label{permuvec3}
\widetilde{\sigma}(h_1 \otimes \dots \otimes h_s) := h_{\widetilde{\sigma}^{-1}(1)} \otimes \dots \otimes h_{\widetilde{\sigma}^{-1}(s)}.
\end{align}
Now let us explain the definition of $\widetilde{\sigma}$. We fix the labeling of the connected components $\Gamma_1,\dots,\Gamma_s$ of $\Gamma$, where $s=s(\Gamma)$,
so that the lowest-labeled vertices are put in increasing order. Explicitly,
for $V(\Gamma_k)$ given by \eqref{vgak} with $i_{1k} < \cdots < i_{n_k k}$, we assume that $i_{11} < i_{12} < \cdots < i_{1s}$.
Following the same convention, we label the connected components of $\sigma\Gamma$ as $(\sigma\Gamma)_1,\dots,(\sigma\Gamma)_s$.
Notice that these are obtained by applying $\sigma$ to the connected components of $\Gamma$, but possibly in different order.
This defines a permutation $\widetilde{\sigma} \in S_s$ so that
\begin{align}\label{tildesigma}
\sigma(\Gamma_{\widetilde{\sigma}(k)}) = (\sigma\Gamma)_{k} \,, \qquad k=1,\dots,s=s(\Gamma) \,.
\end{align}
This is consistent with \eqref{permuvec3} and \eqref{complinear}, as the $k$-th connected component of $\Gamma$ corresponds to the
$k$-th tensor factor of $H^{\otimes s}$ in the image of $Y^\Gamma$.

\begin{example}\label{ex3.2}
Consider the graph $\Gamma \in G(5)$ below:

\begin{center}
\begin{tabular}{cc}
$\Gamma = $
\raisebox{-.45\height}{
\begin{tikzpicture}
\tikzset{>={Latex[width=2mm,length=2mm]}}
\tikzstyle{vertex} =[circle, fill = black,inner sep=0pt,minimum size=1mm,scale=2.5]

\node[vertex,label=below:$1$](v1) at (2,0){};
\node[vertex,label=below:$2$](v2) at (4,0){};
\node[vertex,label=below:$3$](v3) at (6,0){};
\node[vertex,label=below:$4$](v4) at (8,0){};
\node[vertex,label=below:$5$](v5) at (10,0){};
\draw [->,thick] (v1) to [out=45,in=135] (v3);
\draw [->,thick] (v2) to [out=-45,in=-135] (v4);
\end{tikzpicture}
}
\end{tabular}
\end{center}
Its connected components are $\Gamma_1,\Gamma_2,\Gamma_3$ with vertex sets
\begin{equation*}
V(\Gamma_1) = \{1,3\} \,,\quad V(\Gamma_2) = \{2,4\} \,,\quad V(\Gamma_3) = \{5\} \,.
\end{equation*}
For the permutation $\sigma = (145)(23) \in S_5$, the graph $\sigma\Gamma$ is
\begin{center}
\begin{tabular}{cc}
$\sigma\Gamma = $
\raisebox{-.45\height}{
\begin{tikzpicture}
\tikzset{>={Latex[width=2mm,length=2mm]}}
\tikzstyle{vertex} =[circle, fill = black,inner sep=0pt,minimum size=1mm,scale=2.5]
\node[vertex,label=below:$4$](v1) at (2,0){};
\node[vertex,label=below:$3$](v2) at (4,0){};
\node[vertex,label=below:$2$](v3) at (6,0){};
\node[vertex,label=below:$5$](v4) at (8,0){};
\node[vertex,label=below:$1$](v5) at (10,0){};
\draw [->,thick] (v1) to [out=45,in=135] (v3);
\draw [->,thick] (v2) to [out=-45,in=-135] (v4);
\end{tikzpicture}
}
\end{tabular}
\end{center}
The connected components of $\sigma\Gamma$ have vertex sets
\begin{equation*}
V((\sigma\Gamma)_1) = \{1\} \,,\quad V((\sigma\Gamma)_2) = \{2,4\} \,,\quad V((\sigma\Gamma)_3) = \{3,5\} \,.
\end{equation*}
On the other hand, applying $\sigma$ to $\Gamma_1,\Gamma_2,\Gamma_3$, we obtain the components of $\sigma\Gamma$ with vertex sets
\begin{equation*}
V(\sigma(\Gamma_1)) = \{2,4\} \,,\quad V(\sigma(\Gamma_2)) = \{3,5\} \,,\quad V(\sigma(\Gamma_3)) = \{1\} \,.
\end{equation*}
Therefore, due to \eqref{tildesigma}, the permutation $\widetilde{\sigma} \in S_3$ is equal to $(132)$.
\end{example}

\begin{lemma}\label{lemact}
For every $Y\in\mathcal{P}^{cl}_H(n)$ and $\sigma\in S_n$, we have $Y^\sigma\in\mathcal{P}^{cl}_H(n)$.
Moreover, $(Y^{\sigma_1})^{\sigma_2} = Y^{\sigma_1\sigma_2}$ for\/ $\sigma_1,\sigma_2\in S_n$.
\end{lemma}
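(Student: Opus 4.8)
The plan is to verify the three defining axioms for $Y^\sigma$ and then the right-action identity, working directly from the defining formula \eqref{snaction}. Throughout I will use that both the signed action of $S_n$ on $V^{\otimes n}$ given by \eqref{permuvec2} and the action of $S_s$ on $H^{\otimes s}$ given by \eqref{permuvec3} are honest left actions; the latter carries no signs since $H$ is purely even. To keep track of the dependence of the component permutation on the graph, I write $\widetilde{\sigma}_\Gamma$ for the permutation attached by \eqref{tildesigma} to the pair $(\Gamma,\sigma)$.

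First I would dispatch the two cycle conditions. For the first cycle condition, relabeling vertices by $\sigma$ carries (unoriented) cycles to cycles, so $\Gamma\notin G_0(n)$ forces $\sigma\Gamma\notin G_0(n)$ and hence $Y^{\sigma\Gamma}=0$, giving $(Y^\sigma)^\Gamma=0$. For the second cycle condition I would use the key observation that deleting any single edge $e$ from an oriented cycle $C\subset E(\Gamma)$ leaves the remaining edges of $C$ as a directed path through the same vertices; hence $\Gamma\backslash e$ has exactly the same partition into connected components as $\Gamma$, and the labeling convention yields $\widetilde{\sigma}_{\Gamma\backslash e}=\widetilde{\sigma}_\Gamma$ independently of $e\in C$. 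This lets the operator $\widetilde{\sigma}_\Gamma\otimes_H 1$ be pulled out of the sum; then rewriting $\sigma(\Gamma\backslash e)=(\sigma\Gamma)\backslash\sigma(e)$ and noting that $\sigma(C)$ is an oriented cycle in $\sigma\Gamma$, the second cycle condition \eqref{cycle2} for $Y$ applied to $\sigma(C)$ gives $\sum_{e\in C}(Y^\sigma)^{\Gamma\backslash e}=0$.

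The main work — and the step I expect to be the principal obstacle — is componentwise $H$-linearity, since it requires tracking how $\sigma$ and $\widetilde\sigma$ move the distinguished index. Here I would first establish $\sigma(h\cdot_{\Gamma_k}v)=h\cdot_{(\sigma\Gamma)_{\widetilde{\sigma}_\Gamma^{-1}(k)}}(\sigma v)$: relabeling sends $V(\Gamma_k)$ to $V(\sigma(\Gamma_k))=V((\sigma\Gamma)_{\widetilde{\sigma}_\Gamma^{-1}(k)})$ by \eqref{tildesigma}, cocommutativity of $H$ makes the order of the iterated-coproduct legs on these vertices immaterial, and since $H$ is even the sign $\epsilon_v(\sigma)$ is unaffected by the action of $h$. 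Applying \eqref{snaction} and then the componentwise $H$-linearity \eqref{complinear} of $Y$ on $\sigma\Gamma$ at component $\widetilde{\sigma}_\Gamma^{-1}(k)$ reduces the claim to the commutation relation $\widetilde{\sigma}_\Gamma\circ M_{h,\,\widetilde{\sigma}_\Gamma^{-1}(k)}=M_{h,k}\circ\widetilde{\sigma}_\Gamma$ on $H^{\otimes s}$, where $M_{h,j}$ is left multiplication by $h$ in the $j$-th tensor leg; this holds because \eqref{permuvec3} moves leg $\widetilde{\sigma}_\Gamma^{-1}(k)$ to leg $k$. Combining these yields \eqref{complinear} for $Y^\sigma$, so $Y^\sigma\in\mathcal{P}^{cl}_H(n)$.

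Finally, for the right-action identity I would expand $((Y^{\sigma_1})^{\sigma_2})^\Gamma$ by applying \eqref{snaction} twice and compare with $(Y^{\sigma_1\sigma_2})^\Gamma$. The $V^{\otimes n}$-arguments agree because $\sigma_1(\sigma_2 v)=(\sigma_1\sigma_2)v$, and the graphs agree because $\sigma_1(\sigma_2\Gamma)=(\sigma_1\sigma_2)\Gamma$; it remains to match the $H^{\otimes s}$-operators, i.e.\ to prove the cocycle identity $\widetilde{(\sigma_1\sigma_2)}_\Gamma=\widetilde{(\sigma_2)}_\Gamma\,\widetilde{(\sigma_1)}_{\sigma_2\Gamma}$ in $S_s$. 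I would verify this from \eqref{tildesigma}: writing $a=\widetilde{(\sigma_2)}_\Gamma$ and $b=\widetilde{(\sigma_1)}_{\sigma_2\Gamma}$, the relations $\sigma_2(\Gamma_{a(j)})=(\sigma_2\Gamma)_j$ and $\sigma_1((\sigma_2\Gamma)_{b(k)})=((\sigma_1\sigma_2)\Gamma)_k$ give $(\sigma_1\sigma_2)(\Gamma_{a(b(k))})=((\sigma_1\sigma_2)\Gamma)_k$, and comparing with the defining relation for $\widetilde{(\sigma_1\sigma_2)}_\Gamma$, using injectivity of $j\mapsto(\sigma_1\sigma_2)(\Gamma_j)$ on components, forces $\widetilde{(\sigma_1\sigma_2)}_\Gamma=ab$. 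Since \eqref{permuvec3} is a left action of $S_s$, this permutation identity is exactly what makes the two composite operators coincide, completing the proof.
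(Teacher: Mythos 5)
Your proof is correct and follows essentially the same route as the paper's: direct verification of the two cycle conditions, of componentwise $H$-linearity, and of the composition identity, all straight from the definition \eqref{snaction}. The only difference is one of detail: where the paper tersely asserts that $\sigma\mapsto\widetilde\sigma$ is a group homomorphism for fixed $\Gamma$, you correctly sharpen this to the cocycle identity $\widetilde{(\sigma_1\sigma_2)}_\Gamma=\widetilde{(\sigma_2)}_\Gamma\,\widetilde{(\sigma_1)}_{\sigma_2\Gamma}$ (with $\widetilde{\sigma_1}$ computed on $\sigma_2\Gamma$), and you supply the supporting observations the paper leaves implicit --- that deleting an edge of an oriented cycle does not change the component partition (so $\widetilde\sigma_{\Gamma\backslash e}=\widetilde\sigma_\Gamma$ can be pulled out of the sum), and that cocommutativity and evenness of $H$ make $\sigma(h\cdot_{\Gamma_k}v)=h\cdot_{(\sigma\Gamma)_{\widetilde\sigma^{-1}(k)}}(\sigma v)$ hold on the nose.
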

\begin{proof}
It is clear that $Y^\sigma$ satisfies the two cycle conditions, because $Y$ does and $\sigma(C)$ is a cycle in $\sigma\Gamma$ for every cycle $C\subset E(\Gamma)$.
The componentwise $H$-linearity of $Y^\sigma$ follows from that of $Y$ and the discussion above Example \ref{ex3.2}. This proves that $Y^\sigma\in\mathcal{P}^{cl}_H(n)$.

Next, we note that for any fixed $\Gamma$, \eqref{tildesigma} implies that the map $\sigma \mapsto \widetilde{\sigma}$ is a group homomorphism $S_n\to S_s$.
Then $((Y^{\sigma_1})^{\sigma_2})^\Gamma(v) = (Y^{\sigma_1\sigma_2})^\Gamma(v)$ follows from \eqref{snaction}, 
using that $\sigma_1(\sigma_2\Gamma) = (\sigma_1\sigma_2)\Gamma$ and $\sigma_1(\sigma_2v) = (\sigma_1\sigma_2)v$.
\end{proof}

\begin{itemize}
    \item \textbf{Compositions} in the operad $\mathcal{P}^{cl}_H$ will be defined in terms of $\circ$-products (cf.\ Sect.\ \ref{circleprod}). 
\end{itemize}

We start with the $\circ_1$-product. Let $X \in \mathcal{P}^{cl}_H(m)$, $Y \in \mathcal{P}^{cl}_H(n)$, where $m,n \geq 1$. 
Given a graph $\Gamma \in G(m+n-1)$, suppose that $\Delta_1^{m1 \dots 1}(\Gamma)$ has $s$ connected components, and
$\Delta_0^{m1 \dots 1}(\Gamma)$ has $t$ connected components and $n$ vertices. We can write
\begin{align}\label{x1}
X^{\Delta_1^{m1 \dots 1}(\Gamma)}(v) &= \sum_{i} (f_{i1} \otimes \dots \otimes f_{is}) \otimes_H x_i(v), 
\quad v\in V^{\otimes m}, \\
\label{y0}
Y^{\Delta_0^{m1 \dots 1}(\Gamma)}(w) &= \sum_{j} (g_{j1} \otimes \dots \otimes g_{jt}) \otimes_H y_j(w),
\quad w\in V^{\otimes n},
\end{align}
for some linear maps $x_i\colon V^{\otimes m} \rightarrow V$ and $y_j\colon V^{\otimes n} \rightarrow V$,
and elements $f_{ik}, g_{jl} \in H$.
Then for $v\in V^{\otimes m}$, $u\in V^{\otimes (n-1)}$, we define
\begin{align}\label{circle1}
    (Y &\circ_1 X)^{\Gamma}(v \otimes u) \notag \\
    := & \sum_{i,j}\bigl((f_{i1(1)} \otimes \dots \otimes f_{is(1)} \otimes 1 \otimes \dots \otimes 1)(\Delta^{(s-1)}(g_{j1}) \otimes g_{j2} \otimes \dots \otimes g_{jt})\bigr) \notag \\ 
    & \otimes_H  y_j\bigl(x_i(v) \otimes (f_{i1(-2)} \otimes\cdots\otimes f_{is(-2)})\cdot u\bigr) \notag \\ 
    = & \sum_{i,j} (f_{i1(1)} g_{j1(1)} \otimes f_{i2(1)} g_{j1(2)} \otimes \dots \otimes f_{is(1)} g_{j1(s)} \otimes g_{j2} \otimes g_{j3} \otimes \dots \otimes g_{jt}) \notag \\ 
    & \otimes_H  y_j\bigl(x_i(v) \otimes (f_{i1(-2)} \otimes\cdots\otimes f_{is(-2)})\cdot u\bigr).
\end{align}
Let us explain the notation used in \eqref{circle1}, writing explicitly
\begin{equation}\label{vu}
v = v_1 \otimes \dots \otimes v_{m} \,, \quad u = v_{m+1} \otimes \dots \otimes v_{m+n-1} \,.
\end{equation}
Recall that the iterated coproduct $\Delta^{(s-1)}(g_{j1}) = g_{j1(1)} \otimes \dots \otimes g_{j1(s)}$ is given by \eqref{itcop}.
In the right-hand side of \eqref{circle1}, the action $(f_{i1(-2)} \otimes\cdots\otimes f_{is(-2)})\cdot u$ is defined by
letting $f_{ik(-2)}$ act via the iterated coproduct on the vectors $v_j$ with $m+1\le j \le m+n-1$ such that vertex $j$ is 
externally connected to any vertex of the $k$-th connected component of $\Delta_1^{m1 \dots 1}(\Gamma)$ $(1\le k\le s)$.
Finally, recall that $f_{ik(1)} \otimes f_{ik(-2)}$ is given by \eqref{eq3.3}. 
In the special case when no vertex $j\in\{m+1, \dots, m+n-1\}$ is externally connected to any vertex in the $k$-th connected component of $\Delta_1^{m1 \dots 1}(\Gamma)$, we get $f_{ik(1)} \otimes \epsilon(f_{ik(-2)}) = f_{ik} \otimes 1$ (cf.\ \eqref{antipax}).

To illustrate the formula, we present the following example.

\begin{example}
Let $X \in \mathcal{P}^{cl}_H(3)$, $Y \in \mathcal{P}^{cl}_H(4)$, and consider the graph $\Gamma \in G(6)$:
\begin{center}
\begin{tikzpicture}
\tikzset{>={Latex[width=2mm,length=2mm]}}
\tikzstyle{vertex} =[circle, fill = black,inner sep=0pt,minimum size=1mm,scale=2.5]

\node[vertex,label=below:$1$](v1) at (2,0){};
\node[vertex,label=below:$2$](v2) at (4,0){};
\node[vertex,label=below:$3$](v3) at (6,0){};
\node[vertex,label=below:$4$](v4) at (8,0){};
\node[vertex,label=below:$5$](v5) at (10,0){};
\node[vertex,label=below:$6$](v6) at (12,0){};

\draw [->,thick] (v1) to [out=45,in=135] (v3);
\draw [->,thick] (v3) to (v4);
\draw [->,thick] (v2) to [out=-45,in=-135] (v5);
\end{tikzpicture}
\end{center}
As defined in Sect.\ \ref{ngraphs}, we have:
\begin{flushleft}
\begin{tabular}{cc}
\hspace{15mm}$\Gamma_1 := \Delta^{3111}_1(\Gamma) = $
\raisebox{-.3\height}{
\begin{tikzpicture}
\tikzset{>={Latex[width=2mm,length=2mm]}}
\tikzstyle{vertex} =[circle, fill = black,inner sep=0pt,minimum size=1mm,scale=2.5]
\node[vertex,label=below:$1$](v1) at (2,0){};
\node[vertex,label=below:$2$](v2) at (4,0){};
\node[vertex,label=below:$3$](v3) at (6,0){};

\draw [->,thick] (v1) to [out=45,in=135] (v3);
\end{tikzpicture}
}
\end{tabular}
\end{flushleft}

\hfill\\

\begin{flushleft}
\begin{tabular}{cc}
\hspace{15mm}$\Gamma_0 := \Delta^{3111}_0(\Gamma) = $
\raisebox{-.8\height}{
\begin{tikzpicture}
\tikzset{>={Latex[width=2mm,length=2mm]}}
\tikzstyle{vertex} =[circle, fill = black,inner sep=0pt,minimum size=1mm,scale=2.5]

\node[vertex,label=below:$1$](v1) at (2,0){};
\node[vertex,label=below:$2$](v2) at (4,0){};
\node[vertex,label=below:$3$](v3) at (6,0){};
\node[vertex,label=below:$4$](v4) at (8,0){};

\draw [->,thick] (v1) to (v2);
\draw [->,thick] (v1) to [out=-45,in=-135] (v3);
\end{tikzpicture}
}
\end{tabular}
\end{flushleft}
The number of connected components of $\Gamma_1$ and $\Gamma_0$ are both $2$. In addition, the vertex that is externally connected to (any vertex of) the first connected component of $\Gamma_1$ is $v_4$, and $v_5$ for the second. Let us write
\begin{align*}
    X^{\Gamma_1}(v_1 \otimes v_2 \otimes v_3) &= \sum_{i} (f_{i1} \otimes f_{i2}) \otimes_H x_i(v_1 \otimes v_2 \otimes v_3), \\
    Y^{\Gamma_0}(w_1 \otimes \dots \otimes w_4) &= \sum_{j} (g_{j1} \otimes g_{j2}) \otimes_H y_j(w_1 \otimes \dots \otimes w_4).
\end{align*}
Then \eqref{circle1} becomes:
\begin{align*}
     (Y \circ_1 X)^{\Gamma} &(v_1 \otimes \dots \otimes v_6) = \sum_{i,j} (f_{i1(1)} g_{j1(1)} \otimes f_{i2(1)} g_{j1(2)} \otimes g_{j2}) \notag \\
    & \otimes_H y_j\bigl(x_i(v_1 \otimes v_2 \otimes v_3) \otimes f_{i1(-2)}v_4 \otimes f_{i2(-2)}v_5 \otimes v_6\bigr).
\end{align*}
\end{example}

\begin{remark}
Eq.\ \eqref{circle1} can be extended to the case when $X \in \mathcal{P}^{cl}_H(0)$, for which $X$ can be viewed as a vector in $V$ (see \eqref{pclh0}).
If we define $\Delta^{(-1)} = \epsilon$ to be the counit map, then for $X \in \mathcal{P}^{cl}_H(0)$, $Y \in \mathcal{P}^{cl}_H(n)$, and $\Gamma \in G(n-1)$, Eq.\ \eqref{circle1} becomes:
\begin{align}
    (Y \circ_1 X)^{\Gamma}(u)
    = \sum_{j}\epsilon(g_{j1})(g_{j2} \otimes \dots \otimes g_{jt}) \otimes_H y_j(X \otimes u),
\end{align}
for $u \in V^{\otimes (n-1)}$.
\end{remark}

\begin{remark}
    In the case when $H = \mathbb{F}[\partial]$, our $\mathcal{P}_{\mathbb{F}[\partial]}^{cl}$ coincides with the \emph{classical operad} $P^{\mathrm{cl}}$ introduced in \cite[Sect.\ 10.2]{BDHK19}. 
    When $H = \mathbb{F}$, $\mathcal{P}_{\mathbb{F}}^{cl}$ is the \emph{finite} classical operad $P^{\mathrm{fn}}$ from \cite[Sect.\ 10.5]{BDHK19}. 
\end{remark}

\begin{remark}\label{pstarh}
We can realize the operad $\mathcal{P}^*_H$, defined in \eqref{p*hn} and \cite[Sect.\ 3]{BDK01}, as a suboperad of $\mathcal{P}^{cl}_H$ by considering
maps $Y \in \mathcal{P}^{cl}_H(n)$ such that $Y^\Gamma=0$ for any $n$-graph $\Gamma$ with at least one edge.
\end{remark}

To show that (\ref{circle1}) is well defined, we have the following lemmas.

\begin{lemma}\label{numbercc}
        Suppose that $\Gamma \in G_0(m+n-1)$ and  $\Delta_0^{m1 \dots 1}(\Gamma) \in G_0(n)$. If there are  $s$ connected components in $\Delta_1^{m1\dots 1}(\Gamma)$ and $t$ connected components in $\Delta_0^{m1\dots 1}(\Gamma)$, then there are \,$s+t-1$\, connected components in $\Gamma$. 
\end{lemma}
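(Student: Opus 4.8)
The plan is to reduce the statement to the elementary counting identity for forests: any acyclic graph $G$ satisfies $s(G) = |V(G)| - |E(G)|$, where $s(G)$ denotes the number of connected components. Indeed, since a graph in $G_0$ has no unoriented cycles (and in particular no multiple edges), each of its connected components is a tree, and a tree on $v$ vertices has $v-1$ edges; summing over components gives the identity. I would apply this identity to the three acyclic graphs $\Gamma$, $\Delta_1^{m1\dots1}(\Gamma)$, and $\Delta_0^{m1\dots1}(\Gamma)$, and then combine the resulting equations using the edge bijection of Lemma \ref{edgecorrespondence}.

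First I would record the vertex counts: $\Gamma$ has $m+n-1$ vertices, $\Delta_1^{m1\dots1}(\Gamma)$ has $m$ vertices (the first group), and $\Delta_0^{m1\dots1}(\Gamma)$ has $n$ vertices (one per group). I then need acyclicity of each. For $\Gamma$ this is the hypothesis $\Gamma\in G_0(m+n-1)$; for $\Delta_1^{m1\dots1}(\Gamma)$ it follows because this graph is a subgraph of $\Gamma$, hence a forest; and for $\Delta_0^{m1\dots1}(\Gamma)$ it is exactly the standing hypothesis $\Delta_0^{m1\dots1}(\Gamma)\in G_0(n)$. Applying the forest identity gives $s = m - |E(\Delta_1^{m1\dots1}(\Gamma))|$, $t = n - |E(\Delta_0^{m1\dots1}(\Gamma))|$, and $s(\Gamma) = (m+n-1) - |E(\Gamma)|$.

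Next I would compute $|E(\Gamma)|$ via Lemma \ref{edgecorrespondence}, which provides a bijection $E(\Gamma) \cong E(\Delta_0^{m1\dots1}(\Gamma)) \sqcup E(\Delta_1^{m1\dots1}(\Gamma)) \sqcup \cdots \sqcup E(\Delta_n^{m1\dots1}(\Gamma))$. Here the partition type is $(m,1,\dots,1)$, so for each $k$ with $2\le k\le n$ the group $k$ is a single vertex and the subgraph $\Delta_k^{m1\dots1}(\Gamma)$ has no edges (there are no tadpoles). Hence $|E(\Gamma)| = |E(\Delta_0^{m1\dots1}(\Gamma))| + |E(\Delta_1^{m1\dots1}(\Gamma))|$. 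Substituting into the expression for $s(\Gamma)$ and using the two previous identities yields $s(\Gamma) = (m+n-1) - |E(\Delta_0^{m1\dots1}(\Gamma))| - |E(\Delta_1^{m1\dots1}(\Gamma))| = (m - |E(\Delta_1^{m1\dots1}(\Gamma))|) + (n - |E(\Delta_0^{m1\dots1}(\Gamma))|) - 1 = s + t - 1$, as claimed.

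There is no serious obstacle here; the computation is forced once the forest identity is in place. The only points that require genuine care are verifying acyclicity — in particular observing that the acyclicity of $\Delta_0^{m1\dots1}(\Gamma)$ is an essential separate hypothesis rather than a consequence of $\Gamma\in G_0$, since contracting the first group can create a multiple edge (hence a cycle) even when $\Gamma$ itself is acyclic — and confirming that the singleton groups contribute no internal edges, so that only the $\Delta_0$ and $\Delta_1$ terms survive in the edge bijection. A more hands-on alternative, should one wish to avoid the Euler-characteristic count, is to argue structurally: the $t-1$ connected components of $\Delta_0^{m1\dots1}(\Gamma)$ not containing the contracted vertex are literally connected components of $\Gamma$, while expanding the contracted vertex back into the $s$-component forest $\Delta_1^{m1\dots1}(\Gamma)$ contributes exactly $s$ further components; but the global counting argument is shorter, and I would present that.
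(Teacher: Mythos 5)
Your proof is correct, but it takes a genuinely different route from the paper's. You reduce the statement to the forest identity $s(G)=|V(G)|-|E(G)|$ applied to the three acyclic graphs $\Gamma$, $\Delta_1^{m1\dots1}(\Gamma)$, $\Delta_0^{m1\dots1}(\Gamma)$, and then combine the counts via the edge bijection of Lemma \ref{edgecorrespondence}, noting that the singleton groups contribute no edges; all steps check out, including the two points that need care (acyclicity of $\Delta_0^{m1\dots1}(\Gamma)$ is a separate hypothesis, since contraction can create multiple edges, and multiple edges count as cycles in $G_0$, so components really are trees). The paper instead argues structurally --- exactly the ``hands-on alternative'' you sketch in your last sentence: the component $\widetilde\Gamma_1$ of $\Delta_0^{m1\dots1}(\Gamma)$ containing the clasped vertex is shown to be the image of exactly $s$ components of $\Gamma$ (at most $s$ since every vertex of $\widetilde\Gamma_1$ attaches to one of the $s$ components of $\Delta_1^{m1\dots1}(\Gamma)$, and at least $s$ because a merger would force a cycle in $\Delta_0^{m1\dots1}(\Gamma)$), while the remaining $t-1$ components are untouched components of $\Gamma$. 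Your counting argument is shorter and arguably cleaner as a proof of the numerical identity alone; what the paper's version buys is the explicit correspondence between connected components of $\Gamma$ and those of $\Delta_0^{m1\dots1}(\Gamma)$ and $\Delta_1^{m1\dots1}(\Gamma)$, which is not a by-product of the Euler-characteristic computation but is reused later --- the definition of the permutation $\rho_k^\Gamma$ and several steps in the proof of Proposition \ref{welldefined1} cite precisely this identification ``see the proof of Lemma \ref{numbercc}''. So if one adopted your proof verbatim, those later references would need the structural statement to be extracted and proved separately anyway.
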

\begin{proof}
Recall that $\Delta_1^{m1\dots 1}(\Gamma)$ is the subgraph of $\Gamma$ consisting of the first $m$ vertices and all edges among them,
and $\widetilde\Gamma:=\Delta_0^{m1\dots 1}(\Gamma)$ is obtained from $\Gamma$ by clasping the first $m$ vertices into one vertex labeled $1$.
Denote the connected components of $\widetilde\Gamma$ as $\widetilde\Gamma_1,\dots,\widetilde\Gamma_t$.
Note that $\widetilde\Gamma_1$ is the image under $\Delta_0^{m1\dots 1}$ of all connected components of $\Gamma$
that are connected to one of the first $m$ vertices of $\Gamma$. 

We claim that $\widetilde\Gamma_1$ is the image of exactly $s$ connected components of $\Gamma$.
Indeed, it is at most $s$, because each vertex of $\widetilde\Gamma_1$ is connected to some connected component of $\Delta_1^{m1\dots 1}(\Gamma)$,
and there are $s$ of them. If it is strictly less than $s$, then 
there will be a path in $\Gamma$ from one of the $s$ connected components of $\Delta_1^{m1\dots 1}(\Gamma)$ to another, and this path will have to first traverse outside $\Delta_1^{m1\dots 1}(\Gamma)$ and then back to another connected component of $\Delta_1^{m1\dots 1}(\Gamma)$. 
This will produce a cycle in $\widetilde\Gamma$, which is a contradiction.

Finally, note that the other $t-1$ connected components of $\widetilde\Gamma$ are not connected to the first $m$ vertices of $\Gamma$; 
hence they are the same in $\Gamma$ and $\widetilde\Gamma$.
Altogether, we obtain that $\Gamma$ has $s+t-1$ connected components.
\end{proof}

\begin{lemma}\label{disjoint}
In \eqref{circle1}, the actions of $f_{ik(-2)}$ and $f_{il(-2)}$ are disjoint for $k \neq l$ when $\Delta_0^{m1 \dots 1}(\Gamma) \in G_0(n)$. 
\end{lemma}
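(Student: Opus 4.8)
The plan is to reformulate the asserted disjointness as a statement about external connectedness and then argue by contradiction, the crux being the acyclicity of $\Delta_0 := \Delta_0^{m1\dots1}(\Gamma)$. Write $\Delta_1 := \Delta_1^{m1\dots1}(\Gamma)$ for the subgraph on the first $m$ vertices and let $\Gamma^{(1)},\dots,\Gamma^{(s)}$ be its connected components. Since $f_{ik(-2)}$ acts precisely on those $v_j$ (with $m+1\le j\le m+n-1$) for which vertex $j$ is externally connected to some vertex of $\Gamma^{(k)}$, proving the lemma amounts to showing that no single vertex $j\in\{m+1,\dots,m+n-1\}$ can be externally connected to two distinct components $\Gamma^{(k)}$ and $\Gamma^{(l)}$ with $k\neq l$. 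So I would assume such a $j$ exists and derive a contradiction.

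Unwinding Definition \ref{externalconn} for the partition $(m,1,\dots,1)$, the vertex $j$ forms the singleton group $a:=j-m+1\ge 2$, while all of $\{1,\dots,m\}$ is clasped into node $1$ of $\Delta_0$. External connectedness to $\Gamma^{(k)}$ then yields an unoriented path (without repeated edges) $P_k$ in $\Delta_0$ from node $a$ to node $1$, whose terminal edge $e_k$ at node $1$ is the image under $\Delta$ of an edge $\tilde e_k$ of $\Gamma$ incident to some vertex $c_k\in\Gamma^{(k)}$; symmetrically one obtains $P_l$, $e_l$, $\tilde e_l$ and $c_l\in\Gamma^{(l)}$. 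The key step is then to show $e_k\neq e_l$. Because $e_k,e_l\in E(\Delta_0)$, Lemma \ref{edgecorrespondence} shows that $\tilde e_k$ and $\tilde e_l$ each join group $1$ to a different group, so each has exactly one endpoint in $\{1,\dots,m\}$, namely $c_k$ and $c_l$. As $c_k$ and $c_l$ lie in different components of $\Delta_1$, we have $c_k\neq c_l$; if we had $e_k=e_l$, the bijection in Lemma \ref{edgecorrespondence} would force $\tilde e_k=\tilde e_l$, a single edge meeting both $c_k$ and $c_l$ inside $\{1,\dots,m\}$, contradicting that it has only one endpoint there. Hence $e_k\neq e_l$, and since each of $P_k$ and $P_l$ reaches node $1$ through its own terminal edge, the paths $P_k$ and $P_l$ are distinct.

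Finally, I would invoke acyclicity: $P_k$ and $P_l$ are two distinct unoriented paths without repeated edges joining the distinct nodes $a$ and $1$ in $\Delta_0\in G_0(n)$. Each connected component of a graph in $G_0(n)$ is a tree, and in a tree any walk without repeated edges between two vertices must be the unique simple path between them; in particular two vertices are joined by at most one such path. This contradicts $P_k\neq P_l$, proving the disjointness. I expect the main obstacle to be the bookkeeping in the middle step—carefully using the edge bijection of Lemma \ref{edgecorrespondence} to convert the distinctness of the components $\Gamma^{(k)}$ and $\Gamma^{(l)}$ into the distinctness of the terminal edges $e_k$ and $e_l$—since once that is in hand, the acyclicity of $\Delta_0$ closes the argument immediately.
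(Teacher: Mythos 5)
Your proof is correct and follows essentially the same route as the paper's: assume some vertex $j$ is externally connected to two distinct components of $\Delta_1^{m1\dots1}(\Gamma)$, and derive a contradiction with the acyclicity of $\Delta_0^{m1\dots1}(\Gamma)$. The only difference is one of detail---where the paper tersely asserts "this gives a cycle in $\Delta_0^{m1\dots1}(\Gamma)$," you carefully justify it by using the edge bijection of Lemma~\ref{edgecorrespondence} to show the two paths have distinct terminal edges at the clasped vertex, and then invoke the uniqueness of trails in a tree.
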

\begin{proof}
Note that $1 \leq k,l \leq s \leq m$, hence $k$ and $l$ are identified as the same vertex in  $\Delta_0^{m1 \dots 1}(\Gamma)$.
If there exits a vertex $j \in \{m+1, \dots, m+n-1\}$ such that both $f_{ik(-2)}$ and $f_{il(-2)}$ act on $v_j$, 
then $j$ is externally connected to both $k$ and $l$. This gives a cycle in  $\Delta_0^{m1 \dots 1}(\Gamma)$, which contradicts with $\Delta_0^{m1 \dots 1}(\Gamma) \in G_0(n)$.
\end{proof}

Lemma \ref{disjoint} indicates that the order of the actions of $f_{ik(-2)}$'s does not matter in (\ref{circle1}). 

\begin{proposition}\label{welldefined1}
Eq.\ \eqref{circle1} defines an element $Y \circ_1 X \in \mathcal{P}^{cl}_H(m+n-1)$.
\end{proposition}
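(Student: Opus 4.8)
The plan is to establish two things: that the right-hand side of \eqref{circle1} is independent of the representatives chosen in \eqref{x1} and \eqref{y0}, and that the resulting collection $\{(Y\circ_1 X)^\Gamma\}_{\Gamma\in G(m+n-1)}$ satisfies the three defining axioms of an element of $\mathcal{P}^{cl}_H(m+n-1)$. First I would check that the output lands in the correct space: the $H$-tensor in \eqref{circle1} has $s+(t-1)$ factors, and when $\Gamma\in G_0(m+n-1)$ and $\Delta_0^{m1\dots1}(\Gamma)\in G_0(n)$, Lemma~\ref{numbercc} identifies $s+t-1$ with the number $s(\Gamma)$ of connected components of $\Gamma$, so that $(Y\circ_1 X)^\Gamma\colon V^{\otimes(m+n-1)}\to H^{\otimes s(\Gamma)}\otimes_H V$ as required; in the remaining cases I will show both sides vanish. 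I expect well-definedness to be the main obstacle, the three axioms being essentially combinatorial consequences of the graph lemmas of Sect.~\ref{ngraphs} combined with the corresponding axioms for $X$ and $Y$.

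For well-definedness, since $X^{\Delta_1^{m1\dots1}(\Gamma)}$ and $Y^{\Delta_0^{m1\dots1}(\Gamma)}$ are genuine maps into balanced tensor products, it suffices to verify that \eqref{circle1} is unchanged under the elementary relation transferring a single $h\in H$ across $\otimes_H$, carried out separately in the $X$-factor and the $Y$-factor. Lemma~\ref{disjoint} is needed at the outset: it guarantees that for distinct $k$ the antipode parts $f_{ik(-2)}$ act on disjoint tensor slots of $u$ (those $v_j$ externally connected to the $k$-th component of $\Delta_1^{m1\dots1}(\Gamma)$ in the sense of Definition~\ref{externalconn}), so that $(f_{i1(-2)}\otimes\cdots\otimes f_{is(-2)})\cdot u$ is unambiguous and order-independent. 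The $X$-move is the essential computation: writing $X^{\Delta_1^{m1\dots1}(\Gamma)}(v)$ with factors $f_{ik}h_{(k)}$ in place of $f_{ik}$ and $x_i'(v)$ in place of $x_i(v)=h\cdot x_i'(v)$, each $f_{ik}h_{(k)}$ splits by \eqref{eq3.3} into a part $f_{ik(1)}h_{(k)(1)}$ that stays in the $k$-th $H$-slot and an antipode part $S(h_{(k)(2)})f_{ik(-2)}$ that joins the action on $u$. Using cocommutativity together with the antipode and counit identities \eqref{antipax}, \eqref{tool0}, \eqref{tool1}, \eqref{identity1} and the balancing relation of $\otimes_H$, the spread-out copies of $h$ must be shown to telescope back into the single action of $h$ on the $x$-slot present in the other representative. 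The $Y$-move, which only redistributes $g_{j1}$ through $\Delta^{(s-1)}$ while leaving $g_{j2},\dots,g_{jt}$ fixed, is a more direct application of coassociativity and the same identities.

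For the first cycle condition, suppose $\Gamma\notin G_0(m+n-1)$ and take an unoriented cycle; under the edge bijection of Lemma~\ref{edgecorrespondence}, and since $m_2=\dots=m_n=1$ forces every $\Delta_k^{m1\dots1}(\Gamma)$ with $k\ge2$ to be edgeless, this cycle either lies among the first $m$ vertices, giving a cycle in $\Delta_1^{m1\dots1}(\Gamma)$ and hence $X^{\Delta_1^{m1\dots1}(\Gamma)}=0$, or its crossing edges form a closed walk with no repeated edges in $\Delta_0^{m1\dots1}(\Gamma)$ and hence a cycle there, giving $Y^{\Delta_0^{m1\dots1}(\Gamma)}=0$; either way \eqref{circle1} vanishes. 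For the second cycle condition \eqref{cycle2}, let $C$ be an oriented cycle in $\Gamma$; every edge of $C$ is a non-bridge, so all $\Gamma\setminus e$ have the same number of components and the terms lie in a common space. By Lemma~\ref{cyclelemma} either $\Delta(C)\subset E(\Delta_1^{m1\dots1}(\Gamma))$, in which case $\Delta_0^{m1\dots1}(\Gamma)$ is fixed and summing over $e\in C$ replaces $X^{\Delta_1^{m1\dots1}(\Gamma)}$ by $\sum_{e'}X^{\Delta_1^{m1\dots1}(\Gamma)\setminus e'}=0$ through the second cycle condition for $X$; or $C_0:=\Delta(C)\cap E(\Delta_0^{m1\dots1}(\Gamma))$ is an oriented cycle in $\Delta_0^{m1\dots1}(\Gamma)$, in which case the terms coming from within-group edges of $C$ vanish individually because $\Delta_0^{m1\dots1}(\Gamma\setminus e)=\Delta_0^{m1\dots1}(\Gamma)$ still contains $C_0$, while the remaining crossing-edge terms reduce to the second cycle condition for $Y$. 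I expect the delicate point here to be controlling how the external-connectivity pattern of Definition~\ref{externalconn}, which governs the antipode actions on $u$, behaves across the individual terms as cycle edges are removed.

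Finally, componentwise $H$-linearity \eqref{complinear} is a direct check: by the proof of Lemma~\ref{numbercc}, the connected components of $\Gamma$ correspond to the components of $\Delta_1^{m1\dots1}(\Gamma)$ together with the components of $\Delta_0^{m1\dots1}(\Gamma)$ other than the clasped one, so the action $h\cdot_{\Gamma_p}(v\otimes u)$ unwinds into the corresponding componentwise actions on the inputs of $X$ and $Y$, and the claim follows from their componentwise $H$-linearity.
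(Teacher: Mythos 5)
Your verification of the three axioms follows essentially the same route as the paper's proof: the first cycle condition via Lemma \ref{cyclelemma} (the cycle either lies inside $\Delta_1^{m1\dots1}(\Gamma)$, killing the $X$-factor, or survives in $\Delta_0^{m1\dots1}(\Gamma)$, killing the $Y$-factor); the second cycle condition by splitting the cycle edges into within-group and crossing ones and noting that one of the two cocomposition factors is common to all removed edges, so that linearity reduces the sum to the second cycle condition for $X$ or for $Y$; and componentwise $H$-linearity via the identification of components of $\Gamma$ coming from Lemma \ref{numbercc}. Where you genuinely depart from the paper is in making representative-independence of \eqref{circle1} an explicit part of the proof: the paper's proof checks only the axioms and leaves implicit the fact that \eqref{circle1} depends only on the values $X^{\Delta_1^{m1\dots1}(\Gamma)}(v)$ and $Y^{\Delta_0^{m1\dots1}(\Gamma)}(w)$ rather than on the chosen expressions \eqref{x1}, \eqref{y0}. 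Your insistence on this point is sound, and it is the exact analogue of what the paper does prove later, in Lemma \ref{lintwd}, for the simpler product \eqref{interaction}.

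There is, however, a concrete gap in your plan for the key step, the ``$X$-move''. You propose to telescope the spread-out copies of $h$ using only cocommutativity, the antipode and counit identities, and the balancing relation of $\otimes_H$. That cannot suffice: after the replacement $f_{ik}\mapsto f_{ik}h_{(k)}$, $x_i(v)=h\cdot x_i'(v)$, the element $h$ you must recover sits inside the first argument of the maps $y_j$, which are arbitrary linear maps arising from a representative of $Y^{\Delta_0^{m1\dots1}(\Gamma)}$; the balancing relation of the target $\otimes_H$ relates multiplication of the $H$-coefficients only to the $H$-action on the outputs $y_j(\cdots)$, never to their arguments, so no Hopf-algebra identity alone moves $h$ across $y_j$. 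The missing ingredient is the componentwise $H$-linearity of $Y$, applied to the connected component of $\Delta_0^{m1\dots1}(\Gamma)$ containing the clasped vertex: write the action of $h$ on the first slot alone as the action of $h_{(1)}$ on that entire component composed with the action of $S(h_{(2)})$ on its remaining vertices (this is \eqref{identity1}); by Lemma \ref{disjoint} those remaining vertices are exactly the disjoint union of the sets acted on by the $f_{ik(-2)}$'s; then \eqref{complinear} converts the $h_{(1)}$-action into left multiplication on the first $H$-slot of $Y^{\Delta_0^{m1\dots1}(\Gamma)}$, which distributes through $\Delta^{(s-1)}$ onto the $s$ slots and matches the $f_{ik}h_{(k)}$-representative, while the $S$-parts match the antipode parts. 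The same caveat applies to your final paragraph: for a component of $\Gamma$ meeting both the first $m$ vertices and the external ones, the claim does not follow by ``unwinding'' into the separate $H$-linearities of $X$ and $Y$; one needs precisely this antipode cancellation via \eqref{identity1}, which is the bulk of the paper's own computation \eqref{compHl}.
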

\begin{proof}
We have to check that $Y \circ_1 X$ satisfies the two cycle conditions and the componentwise $H$-linearity.
First, suppose that $\Gamma \in G(m+n-1)$ has a cycle $C$. If $C \subset \Delta_1^{m1 \dots 1}(\Gamma)$, then $X^{\Delta_1^{m1 \dots 1}(\Gamma)} =0$ since $X \in \mathcal{P}^{cl}_H(m)$; hence $(Y \circ_1 X)^{\Gamma} =0$ by \eqref{x1}, \eqref{circle1}. 
Otherwise, $C$ corresponds to a cycle $C' \subset {\Delta_0^{m1 \dots 1}(\Gamma)}$ (see Lemma \normalfont{\ref{cyclelemma}} and ignore the orientation). This makes $Y^{\Delta_0^{m1 \dots 1}(\Gamma)} =0 $ since $Y \in \mathcal{P}^{cl}_H(n)$; thus $(Y \circ_1 X)^{\Gamma} =0$ by \eqref{y0}, \eqref{circle1}. 
This shows that $Y \circ_1 X$ satisfies the first cycle condition.

For the second cycle condition, consider a cycle $C \subset \Delta_1^{m1 \dots 1}(\Gamma)$. For any edge $e \in C$, let $\Gamma \backslash e$ be the graph obtained by removing $e$ from the graph $\Gamma$. We want to evaluate $(Y \circ_1 X)^{\Gamma \backslash e}$.
Since $\Delta_0^{m1 \dots 1}(\Gamma \backslash e)$ is the same for all $e$, we can use the same expression (\ref{y0}) for all $e$. On the other hand, $X$ will be evaluated on graphs $\Delta_1^{m1 \dots 1}(\Gamma \backslash e) = \Delta_1^{m1 \dots 1}(\Gamma) \backslash e$, and removing an edge $e \in C$ from $\Delta_1^{m1 \dots 1}(\Gamma)$ does not change the number of connected components of $\Delta_1^{m1 \dots 1}(\Gamma)$. Hence, in formula \normalfont{(\ref{circle1})}, only the part involving $f_{ik}$ and $x_i$ may vary with different $e$. However, since the transformation $(\text{id} \otimes S) \Delta$ is linear on $f_{ik}$, we can evaluate it on all $X^{\Delta_1^{m1 \dots 1}(\Gamma) \backslash e}$ together before composing with $Y^{\Delta_0^{m1 \dots 1}(\Gamma)}$. Since $\sum_{e \in C}X^{\Delta_1^{m1 \dots 1}(\Gamma) \backslash e} =0$, we get $\sum_{e \in C}(Y \circ_1 X)^{\Gamma \backslash e} =0$.

If the cycle $C$ is not contained in $\Delta_1^{m1 \dots 1}(\Gamma)$, then $\Delta(C)$ is still a cycle in $\Delta_0^{m1 \dots 1}(\Gamma \backslash e)$ for any edge $e \in E(\Delta_1^{m1 \dots 1}(\Gamma))$. For such $e$, $(Y \circ_1 X)^{\Gamma \backslash e} =0$ 
since $Y^{\Delta_0^{m1 \dots 1}(\Gamma \backslash e)} =0$. Thus, 
\begin{equation*}
\sum_{e \in C}(Y \circ_1 X)^{\Gamma \backslash e} = \sum_{e \in C \cap \Delta_0^{m1 \dots 1}(\Gamma)}(Y \circ_1 X)^{\Gamma \backslash e}. 
\end{equation*}
Note that $\Delta_1^{m1 \dots 1}(\Gamma \backslash e)$ is the same for all $e \in C \cap \Delta_0^{m1 \dots 1}(\Gamma)$, which implies that in \normalfont{(\ref{circle1})} only the $g_{jk}$ and $y_j$ parts may vary with different $e \in C \cap \Delta_0^{m1 \dots 1}(\Gamma)$. The vertices externally connected to any connected component of $\Delta_1^{m1 \dots 1}(\Gamma \backslash e)$ 
stay the same for all $e \in C \cap \Delta_0^{m1 \dots 1}(\Gamma)$. As $\Delta^{(s-1)}$ is linear and $\sum_{e \in C \cap \Delta_0^{m1 \dots 1}(\Gamma)}Y^{\Delta_0^{m1 \dots 1}(\Gamma) \backslash e} =0$, we obtain $\sum_{e \in C \cap \Delta_0^{m1 \dots 1}(\Gamma)}(Y \circ_1 X)^{\Gamma \backslash e} =0$. This shows that the second cycle condition is satisfied.

It is left to check the componentwise $H$-linearity of $Y \circ_1 X$. Pick the $k$-th connected component $K$ of $\Gamma \in G(m+n-1)$. If $K$ is contained in the subgraph formed by vertices $\{m+1, \dots, m+n-1\}$, then for any $h\in H$, $v\in V^{\otimes m}$ and $u\in V^{\otimes (n-1)}$,
we have
\begin{align*}
        (Y &\circ_1 X)^{\Gamma}(h \cdot_K (v \otimes u))
        =  (Y \circ_1 X)^{\Gamma}(v \otimes h \cdot_K u).
\end{align*}
Note that $K$ remains a connected component $\widetilde K$ in $\Delta_0^{m1 \dots 1}(\Gamma)$; let it be the $k_0$-th.
Then $k_0>1$ and $k=s-1+k_0$ where $s$ is the number of connected components of $\Delta_1^{m1 \dots 1}(\Gamma)$ (see the proof of Lemma \ref{numbercc}).
From the componentwise $H$-linearity of $Y$, we get for $w_1\in V$:
\begin{align*}
Y&^{\Delta_0^{m1 \dots 1}(\Gamma)}(w_1 \otimes h \cdot_{\widetilde K} u) \notag \\
    &= (1 \otimes \dots \otimes \underbrace{h}_{k_0} \otimes \dots \otimes 1) Y^{\Delta_0^{m1 \dots 1}(\Gamma)}(w_1 \otimes u)\notag \\
    &= \sum_{j} (g_{j1} \otimes \dots \otimes h g_{jk_0} \otimes \dots \otimes g_{jt}) \otimes_H y_j(w_1 \otimes u).
\end{align*}
Note that $h \cdot_K u = h \cdot_{\widetilde K} u$, as both of these correspond to the action of $h$ on the vertices of the same connected component $K=\widetilde K$
of $\Gamma$ or $\Delta_0^{m1 \dots 1}(\Gamma)$, respectively.
Plugging the above in \eqref{circle1}, we obtain
\begin{align*}
(Y &\circ_1 X)^{\Gamma}(h \cdot_K (v \otimes u)) \\
&= \sum_{i,j} (f_{i1(1)} g_{j1(1)} \otimes \dots \otimes f_{is(1)} g_{j1(s)} \otimes g_{j2} \otimes \dots \otimes h g_{jk_0} \otimes \dots \otimes g_{jt}) \\
&\quad \otimes_H y_j\bigl(x_i(v) \otimes (f_{i1(-2)} \otimes \dots \otimes f_{is(-2)})\cdot u\bigr) \\
&= (1 \otimes \dots \otimes \underbrace{h}_{k} \otimes \dots \otimes 1) (Y \circ_1 X)^{\Gamma}(v \otimes u).
\end{align*}
This proves the componentwise $H$-linearity in the case when $K$ is contained in the subgraph with vertices $\{m+1, \dots, m+n-1\}$.

Next, consider the case when $K$ is a connected component of $\Gamma$ that intersects both subgraphs formed by the first $m$ vertices and the last $n-1$ vertices. 
Let us denote these intersections by $K_1$ and $K_2$, respectively. If $K$ is the $k$-th connected component of $\Gamma$, then $K_1$ is the $k$-th connected component of $\Delta_1^{m1 \dots 1}(\Gamma)$ (see the proof of Lemma \ref{numbercc}). 
Then, by the coassociativity of the coproduct, we have for $h\in H$, $v\in V^{\otimes m}$ and $u\in V^{\otimes (n-1)}$:
\begin{align*}
(Y \circ_1 X)^{\Gamma}\bigl(h \cdot_K (v \otimes u)\bigr)
= (Y \circ_1 X)^{\Gamma}\bigl((h_{(1)} \cdot_{K_1} v) \otimes (h_{(2)} \cdot_{K_2} u)\bigr),
\end{align*}
where $\cdot_{K_2}$ denotes the action on the $v_i$'s in $u=v_{m+1} \otimes \dots \otimes v_{m+n-1}$ corresponding to the vertices of $K_2$.
From the componentwise $H$-linearity of $X$, we get
\begin{align*}
X&^{\Delta_1^{m1 \dots 1}(\Gamma)}(h_{(1)} \cdot_{K_1} v) \\ 
&= (1 \otimes \dots \otimes \underbrace{h_{(1)}}_{k} \otimes \dots \otimes 1) X^{\Delta_1^{m1 \dots 1}(\Gamma)}(v) \\
&= \sum_{i} (f_{i1} \otimes \dots \otimes h_{(1)}f_{ik} \otimes \dots \otimes f_{is}) \otimes_H x_i(v),
\end{align*}
where $h_{(1)}$ multiplies only $f_{ik}$. Plugging this in \eqref{circle1}, we obtain:
\begin{align}\label{compHl}
(&Y \circ_1 X)^{\Gamma}\bigl(h \cdot_K (v \otimes u)\bigr) \notag \\
&= (Y \circ_1 X)^{\Gamma}\bigl((h_{(1)} \cdot_{K_1} v) \otimes (h_{(2)} \cdot_{K_2} u)\bigr) \notag \\
&= \sum_{i,j} \bigl(f_{i1(1)} g_{j1(1)} \otimes \dots \otimes (h_{(1)}f_{ik})_{(1)} g_{j1(k)} \otimes \dots \otimes f_{is(1)} g_{j1(s)} \notag \\ 
& \quad \otimes g_{j2} \otimes \dots \otimes g_{jt}\bigr) \otimes_H y_j\bigl(x_i(v) \otimes (f_{i1(-2)} \otimes\dots\otimes (h_{(1)}f_{ik})_{(-2)} \notag \\
& \quad \otimes \dots\otimes f_{is(-2)})\cdot (h_{(2)} \cdot_{K_2} u) \bigr) \notag \\
&= \sum_{i,j} \bigl(f_{i1(1)} g_{j1(1)} \otimes \dots \otimes h_{(1)} f_{ik(1)} g_{j1(k)} \otimes \dots \otimes f_{is(1)} g_{j1(s)} \notag \\ 
& \quad \otimes g_{j2} \otimes \dots \otimes g_{jt}\bigr) \otimes_H y_j\bigl(x_i(v) \otimes (f_{i1(-2)} \otimes\dots\otimes f_{ik(-2)} h_{(-2)} \notag \\
& \quad \otimes \dots\otimes f_{is(-2)})\cdot (h_{(3)} \cdot_{K_2} u) \bigr),
\end{align}
where for the last equality we used the coassociativity \eqref{coas}.
Recall that in the expression
\begin{align*}
(f_{i1(-2)} \otimes\dots\otimes f_{ik(-2)} h_{(-2)} \otimes \dots\otimes f_{is(-2)})\cdot (h_{(3)} \cdot_{K_2} u),
\end{align*}
$f_{ik(-2)} h_{(-2)}$ acts via the iterated coproduct on the vectors $v_j$ in $u$ such that vertex $j$ is 
externally connected to any vertex of the $k$-th connected component $K_1$ of $\Delta_1^{m1 \dots 1}(\Gamma)$.
These are precisely the vertices of $K_2$. Thus, by \eqref{identity1}, we can simplify the right-hand side of (\ref{compHl}) to
\begin{align*}
\sum_{i,j} &\bigl(f_{i1(1)} g_{j1(1)} \otimes \dots \otimes h f_{ik(1)} g_{j1(k)} \otimes \dots \otimes f_{is(1)} g_{j1(s)} \notag \\ 
& \quad \otimes g_{j2} \otimes \dots \otimes g_{jt}\bigr) \otimes_H y_j\bigl(x_i(v) \otimes (f_{i1(-2)} \otimes\dots\otimes f_{is(-2)}) \cdot u \bigr) \notag \\
&= (1 \otimes \dots \otimes \underbrace{h}_{k} \otimes \dots \otimes 1) (Y \circ_1 X)^{\Gamma}(v \otimes u),
\end{align*}
as desired.

Finally, in the special case when $K$ is contained in $\Delta_1^{m1 \dots 1}(\Gamma)$, we can use the same argument as above with an empty subgraph $K_2$. 
This completes the proof.
\end{proof}

Similarly to the $\circ_1$-product, we can define $\circ_k$-products for $2 \leq k \leq n$ as follows. Let $X \in \mathcal{P}^{cl}_H(m)$, $Y \in \mathcal{P}^{cl}_H(n)$, and $\Gamma \in G(m+n-1)$. Now $X$ will be evaluated on $\Delta_k^{1 \dots 1m1 \dots1}(\Gamma)$ where $m$ is at the $k$-th position. 
As before, we write for $v \in V^{\otimes m}$, $w \in V^{\otimes n}$ and some linear maps $x_i$, $y_j$:
\begin{align}\label{xatk}
    X^{\Delta_k^{1 \dots 1m1 \dots1}(\Gamma)}(v) &= \sum_{i} (f_{i1} \otimes \dots \otimes f_{is}) \otimes_H x_i(v), \\
\label{yatk0}
    Y^{\Delta_0^{1 \dots 1m1 \dots1}(\Gamma)}(w) &= \sum_{j} (g_{j1} \otimes \dots \otimes g_{jt}) \otimes_H y_j(w),
\end{align}
where now $s$ is the number of connected components of $\Delta_k^{1 \dots 1m1 \dots1}(\Gamma)$ 
and $t$ is the number of connected components of $\Delta_0^{1 \dots 1m1 \dots1}(\Gamma)$. 
We have the following analogue of Lemma \ref{numbercc}.

\begin{lemma}\label{numbercck}
        Let\/ $\Gamma \in G_0(m+n-1)$ and\/ $\Delta_0^{1 \dots 1m1 \dots 1}(\Gamma) \in G_0(n)$. If there are  $s$ connected components in $\Delta_k^{1 \dots 1m1\dots 1}(\Gamma)$ and $t$ connected components in $\Delta_0^{1 \dots 1m1\dots 1}(\Gamma)$, then there are $s+t-1$ connected components in $\Gamma$. 
\end{lemma}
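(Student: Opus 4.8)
The plan is to mimic the proof of Lemma \ref{numbercc} essentially verbatim, the only change being that the clasped block of $m$ vertices now occupies positions $k,\dots,k+m-1$ rather than $1,\dots,m$. Since that proof only uses the combinatorics of connected components and of cycles created under clasping, and never uses the actual labels of the clasped vertices, the argument transfers without any real modification.

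First I would set $\widetilde\Gamma := \Delta_0^{1\dots1m1\dots1}(\Gamma)$, the $n$-graph obtained from $\Gamma$ by collapsing the $m$ vertices of group $k$ (namely $k,\dots,k+m-1$) into a single vertex, which after relabeling becomes vertex $k$ of $\widetilde\Gamma$. Denote the connected components of $\widetilde\Gamma$ by $\widetilde\Gamma_1,\dots,\widetilde\Gamma_t$, and let $\widetilde\Gamma_{k_0}$ be the one containing this collapsed vertex. As in Lemma \ref{numbercc}, the component $\widetilde\Gamma_{k_0}$ is exactly the image under collapsing of those connected components of $\Gamma$ that meet group $k$, while the other $t-1$ components of $\widetilde\Gamma$ meet no vertex of group $k$.

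The substantive step is to show that $\widetilde\Gamma_{k_0}$ is the image of \emph{exactly} $s$ connected components of $\Gamma$. It is at most $s$, because every connected component of $\Gamma$ mapping into $\widetilde\Gamma_{k_0}$ must contain a vertex of group $k$, i.e.\ a vertex lying in one of the $s$ connected components of $\Delta_k^{1\dots1m1\dots1}(\Gamma)$. If it were strictly fewer than $s$, then two distinct components of $\Delta_k^{1\dots1m1\dots1}(\Gamma)$ would lie in a common component of $\Gamma$, forcing a path in $\Gamma$ that leaves group $k$ and later returns to it; collapsing group $k$ sends this path to a cycle in $\widetilde\Gamma$, contradicting the hypothesis $\widetilde\Gamma \in G_0(n)$. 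Hence it is exactly $s$.

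Finally I would assemble the count: the remaining $t-1$ components of $\widetilde\Gamma$ are unaffected by the collapse and so coincide with connected components of $\Gamma$, giving $s+(t-1)=s+t-1$ connected components of $\Gamma$ in total. I expect no genuine obstacle here; the one point requiring care is the exactness argument via acyclicity of $\widetilde\Gamma$, exactly as in the $k=1$ case, and this step is manifestly insensitive to the positions of the clasped vertices.
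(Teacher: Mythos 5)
Your proposal is correct and matches the paper's intent exactly: the paper states Lemma \ref{numbercck} without proof as an ``analogue of Lemma \ref{numbercc},'' and your argument is precisely that proof transferred to the block at positions $k,\dots,k+m-1$, including the key step that fewer than $s$ components would force a path leaving group $k$ and returning, hence a cycle in $\Delta_0^{1\dots1m1\dots1}(\Gamma)$, contradicting acyclicity. No gaps; the label-independence you note is exactly why the paper omits the proof.
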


Notice that when we identify the connected components of $\Delta_0^{1 \dots 1m1 \dots1}(\Gamma)$ and $\Delta_k^{1 \dots 1m1 \dots1}(\Gamma)$ as connected components in $\Gamma$
(similarly to the proof of Lemma \ref{numbercc}), they may come in different order. 
We denote by $\rho_{k}^\Gamma \in S_{s+t-1}$ the permutation identifying them, which is defined explicitly as follows.
Let us label the connected components of $\Gamma$ as $\Gamma_1,\dots,\Gamma_{s+t-1}$, following the convention above \eqref{tildesigma}.
Note that each of the connected components $K_i$ ($1\le i\le s$) of $\Delta_k^{1 \dots 1m1 \dots1}(\Gamma)$ connects to a unique connected component of $\Gamma$,
labeled as $\Gamma_{\rho_{k}^\Gamma(i)}$.
As in the proof of Lemma \ref{numbercc}, let $\widetilde\Gamma_1,\dots,\widetilde\Gamma_t$ be the connected components of $\Delta_0^{1 \dots 1m1 \dots1}(\Gamma)$.
Suppose that the vertex in $\Delta_0^{1 \dots 1m1 \dots1}(\Gamma)$ obtained by clasping the $m$ vertices $\{k, \dots, k+m-1\}$ in $\Gamma$ is contained in the $q$-th connected component $\widetilde\Gamma_q$. Then each $\widetilde\Gamma_j$ with $1\le j\le t$, $j\ne q$ remains a connected component of $\Gamma$, and we let
\begin{equation}\label{rhokga}
\widetilde\Gamma_j = \begin{cases}
\Gamma_{\rho_{k}^\Gamma(s+j)}, & 1\le j\le q-1, \\
\Gamma_{\rho_{k}^\Gamma(s-1+j)}, & q+1\le j\le t.
\end{cases}
\end{equation}
This completes the definition of $\rho_{k}^\Gamma$. It is illustrated in Example \ref{ex3.11} below.

Now the $\circ_k$-product is defined as follows:
\begin{align}\label{circleatk}
(&Y \circ_k X)^{\Gamma}(v_1 \otimes \dots \otimes v_{m+n-1}) \notag \\
&:= (-1)^{p_k^X} \sum_{i,j}\rho_{k}^\Gamma \bigl(g_{j1} \otimes g_{j2} \otimes \dots \otimes g_{j\,q-1} \otimes f_{i1(1)} g_{jq(1)} \otimes f_{i2(1)} g_{jq(2)} \notag \\
& \quad\;\; \otimes \dots \otimes f_{is(1)} g_{jq(s)} \otimes g_{j\,q+1} \otimes \dots \otimes g_{jt} \bigr) \otimes_H y_j\bigl((f_{i1(-2)} \otimes\dots\otimes f_{is(-2)}) \notag \\ 
& \quad\;\; \cdot (v_1 \otimes \dots \otimes v_{k-1} \otimes x_i(v_{k} \otimes \dots \otimes v_{k+m-1}) \otimes v_{k+m} \otimes \dots \otimes v_{m+n-1})\bigr),
\end{align}
where the sign $(-1)^{p_k^X}$ follows from the Koszul--Quillen rule:
\begin{align}\label{signcircle}
    (-1)^{p_k^X} = (-1)^{p(X) (p(v_1) + \dots + p(v_{k-1}))},
\end{align}
and the $f_{il(-2)}$'s act on the vertices $\{1,\dots,k-1\} \cup \{k+m, \dots, m+n-1\}$ in a way similar to the $\circ_1$-product case.
The rest of the notation is as in ($\ref{circle1}$); see the explanations below ($\ref{circle1}$).


\begin{example}\label{ex3.11}
We will compute $(Y \circ_4 X)^{\Gamma}$ for $X \in \mathcal{P}^{cl}_H(3)$ and $Y \in \mathcal{P}^{cl}_H(5)$, with the graph $\Gamma \in G(7)$ as shown below:

\bigskip

\begin{center}
\begin{tikzpicture}
\tikzset{>={Latex[width=2mm,length=2mm]}}
\tikzstyle{vertex} =[circle, fill = black,inner sep=0pt,minimum size=1mm,scale=2.5]

\node[vertex,label=below:$1$](v1) at (2,0){};
\node[vertex,label=below:$2$](v2) at (4,0){};
\node[vertex,label=below:$3$](v3) at (6,0){};
\node[vertex,label=below:$4$](v4) at (8,0){};
\node[vertex,label=below:$5$](v5) at (10,0){};
\node[vertex,label=below:$6$](v6) at (12,0){};
\node[vertex,label=below:$7$](v7) at (14,0){};

\draw [->,thick] (v1) to [out=45,in=135] (v5);
\draw [->,thick] (v3) to (v4);
\draw [->,thick] (v6) to [out=45,in=135] (v7);

\draw[dashed] (10,0) circle[radius=2.5cm];
\end{tikzpicture}
\end{center}
\bigskip
By definition of the cocomposition map, we have:

\bigskip
\bigskip

\begin{flushleft}
\begin{tabular}{cc}
\hspace{10mm}$G_4 := \Delta_4^{11131}(\Gamma) = $
\hspace{5mm}\raisebox{-.68\height}{
\begin{tikzpicture}
\tikzset{>={Latex[width=2mm,length=2mm]}}
\tikzstyle{vertex} =[circle, fill = black,inner sep=0pt,minimum size=1mm,scale=2.5]

\node[vertex,label=below:$1$](v1) at (2,0){};
\node[vertex,label=below:$2$](v2) at (4,0){};
\node[vertex,label=below:$3$](v3) at (6,0){};

\end{tikzpicture}
}
\end{tabular}
\end{flushleft}

\bigskip
\bigskip

\begin{flushleft}
\begin{tabular}{cc}
\hspace{10mm}$G_0 := \Delta_0^{11131}(\Gamma) = $ \hspace{4mm}
\raisebox{-.25\height}{
\begin{tikzpicture}
\tikzset{>={Latex[width=2mm,length=2mm]}}
\tikzstyle{vertex} =[circle, fill = black,inner sep=0pt,minimum size=1mm,scale=2.5]
\node[vertex,label=below:$1$](v1) at (1.7,0){};
\node[vertex,label=below:$2$](v2) at (3.4,0){};
\node[vertex,label=below:$3$](v3) at (5.1,0){};
\node[vertex,label=below:$4$](v4) at (6.8,0){};
\node[vertex,label=below:$5$](v5) at (8.5,0){};

\draw [->,thick] (v1) to [out=45,in=135] (v4);
\draw [->,thick] (v3) to (v4);
\draw [->,thick] (v4) to [out=45,in=135] (v5);
\end{tikzpicture}
}
\end{tabular}
\end{flushleft}
\bigskip
which gives that $s=3$, $t=2$, and $q=1$. In order to determine the permutation $\rho^\Gamma_{4} \in S_4$,
let us specify the connected components of the graphs $\Gamma$, $G_4$ and $G_0$ by listing their vertices.
Then the 1-st, 2-nd and 3-rd connected components $K_1=\{1\}$, $K_2=\{2\}$, $K_3=\{3\}$ of $G_4$ connect to the
3-rd, 1-st and 4-th connected components $\Gamma_3=\{3,4\}$, $\Gamma_1=\{1,5\}$, $\Gamma_4=\{6,7\}$ of $\Gamma$, respectively.
This gives that 
\begin{align*}
\rho^\Gamma_{4}(1)=3, \quad \rho^\Gamma_{4}(2)=1, \quad \rho^\Gamma_{4}(3)=4.
\end{align*}
The 1-st connected component $\widetilde\Gamma_1=\{1,3,4,5\}$ of $G_0$ is broken into $s$ connected components of $\Gamma$
so it does not contribute to $\rho^\Gamma_{4}$, while the 2-nd connected component $\widetilde\Gamma_2=\{2\}$ of $G_0$
is equal to the 2-nd connected component $\Gamma_2=\{2\}$ of $\Gamma$. This gives
\begin{align*}
\rho^\Gamma_{4}(s-1+2)=\rho^\Gamma_{4}(4)=2.
\end{align*}
Let us write
\begin{align*}
    X^{G_4}(v)  &= \sum_{i} (f_{i1} \otimes f_{i2} \otimes f_{i3} ) \otimes_H x_i(v),
    & v &\in V^{\otimes 3}, \\
    Y^{G_0}(w) &= \sum_{j} (g_{j1} \otimes g_{j2} ) \otimes_H y_j(w),
    & w &\in V^{\otimes 5}.
\end{align*}
Then, using \eqref{circleatk}, we find:
\begin{align*}
    (&Y \circ_4 X)^{\Gamma} \notag \\
    &= (-1)^{p_4^X} \sum_{i,j} \rho^\Gamma_{4}\bigl(f_{i1(1)} g_{j1(1)} \otimes f_{i2(1)} g_{j1(2)} \otimes f_{i3(1)} g_{j1(3)} \otimes g_{j2}\bigr) \notag \\
    & \otimes_H y_j\bigl((f_{i1(-2)} \otimes f_{i2(-2)} \otimes f_{i3(-2)}) \cdot (v_1 \otimes v_2 \otimes v_3 \otimes x_i(v_4 \otimes v_5 \otimes v_6) \otimes v_7)\bigr) \notag \\~ \notag \\
     &= (-1)^{p_4^X} \sum_{i,j} \bigl(f_{i2(1)} g_{j1(2)} \otimes g_{j2} \otimes f_{i1(1)} g_{j1(1)} \otimes f_{i3(1)} g_{j1(3)} \bigr) \notag \\
     & \otimes_H y_j\bigl(f_{i2(-2)} v_1 \otimes v_2 \otimes f_{i1(-2)}v_3 \otimes x_i(v_4 \otimes v_5 \otimes v_6) \otimes f_{i3(-2)} v_7\bigr).
\end{align*}
\end{example}

One can show that the following results are also true analogously to the proofs of Lemma \ref{disjoint} and Proposition \ref{welldefined1}. The $\circ_k$-product can be thought of as a $\circ_1$-product (possibly up to a sign change) after relabeling the vertices of $\Gamma$.

\begin{lemma}\label{disjointk}
In \eqref{circleatk}, the actions of $f_{ij(-2)}$ and $f_{il(-2)}$ are disjoint if $j \neq l$ and $\Delta_0^{1 \dots 1m1 \dots 1}(\Gamma) \in G_0(n)$. 
\end{lemma}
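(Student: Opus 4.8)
The plan is to follow the proof of Lemma~\ref{disjoint} almost verbatim, the only change being that the clasped group now sits at position $k$ instead of position~$1$; I will also indicate how the statement reduces directly to Lemma~\ref{disjoint} itself. Recall that $\Delta_k^{1 \dots 1m1 \dots 1}(\Gamma)$ is the subgraph of $\Gamma$ on the vertices $\{k,\dots,k+m-1\}$, with connected components $K_1,\dots,K_s$, and that $\Delta_0^{1 \dots 1m1 \dots 1}(\Gamma)$ is obtained from $\Gamma$ by clasping these $m$ vertices into a single vertex, which I denote $\star$. All of $K_1,\dots,K_s$ collapse to $\star$ in $\Delta_0^{1 \dots 1m1 \dots 1}(\Gamma)$, exactly as the first $m$ vertices do in the $\circ_1$ case.

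First I would argue by contradiction. Suppose that for some $j\neq l$ there is an outside vertex $v_p$, with $p\in\{1,\dots,k-1\}\cup\{k+m,\dots,m+n-1\}$, on which both $f_{ij(-2)}$ and $f_{il(-2)}$ act in \eqref{circleatk}. By the prescription for the action (the same as in the $\circ_1$ case), the group containing $p$ is then externally connected, in the sense of Definition~\ref{externalconn}, both to a vertex of $K_j$ and to a vertex of $K_l$. Each such external connection provides an unoriented path in $\Delta_0^{1 \dots 1m1 \dots 1}(\Gamma)$ from the group of $p$ to $\star$ whose final edge at $\star$ is, via Lemma~\ref{edgecorrespondence}, the image of an edge of $\Gamma$ attached to $K_j$, respectively to $K_l$.

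The key step is to note that these two final edges at $\star$ are distinct, since they are images of edges attached to the two different components $K_j$ and $K_l$ and $\Delta$ is a bijection on edges by Lemma~\ref{edgecorrespondence}. Concatenating the first path with the reverse of the second therefore produces a nontrivial unoriented closed walk that does not backtrack at $\star$, hence contains an unoriented cycle; this contradicts $\Delta_0^{1 \dots 1m1 \dots 1}(\Gamma)\in G_0(n)$. I expect the only delicate point to be exactly this distinctness of the two edges at $\star$, which is where the hypothesis $j\neq l$ and the component structure of $\Delta_k^{1 \dots 1m1 \dots 1}(\Gamma)$ enter; once it is secured, the cycle and the contradiction are immediate. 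A cleaner alternative, following the observation that the $\circ_k$-product is a relabeled $\circ_1$-product, is to apply the permutation $\sigma\in S_{m+n-1}$ carrying $\{k,\dots,k+m-1\}$ onto $\{1,\dots,m\}$ (preserving the order within the block and within its complement): this is a graph isomorphism under which $\Delta_k^{1 \dots 1m1 \dots 1}(\Gamma)$ and $\Delta_0^{1 \dots 1m1 \dots 1}(\Gamma)$ correspond to $\Delta_1^{m1 \dots 1}(\sigma\Gamma)$ and $\Delta_0^{m1 \dots 1}(\sigma\Gamma)$, and since external connectedness is preserved by relabeling, Lemma~\ref{disjoint} applied to $\sigma\Gamma$ yields the claim at once.
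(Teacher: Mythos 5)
Your proposal is correct and takes essentially the same approach as the paper: the paper gives no separate proof of this lemma, remarking only that it holds analogously to Lemma~\ref{disjoint} because the $\circ_k$-product can be viewed as a $\circ_1$-product after relabeling the vertices of $\Gamma$ --- which is precisely your second (reduction) argument. Your first argument is the direct analogue of the paper's proof of Lemma~\ref{disjoint}, carried out in somewhat more detail (two external-connection paths meeting the clasped vertex in distinct edges force an unoriented cycle in $\Delta_0^{1\dots 1m1\dots 1}(\Gamma)$, contradicting acyclicity).
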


\begin{proposition}\label{welldefinedk}
Eq.\ \eqref{circleatk} defines an element $Y \circ_k X \in \mathcal{P}^{cl}_H(m+n-1)$.
\end{proposition}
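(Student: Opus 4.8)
The plan is to reduce the statement to the already-established case $k=1$ (Proposition \ref{welldefined1}) together with the invariance of $\mathcal{P}^{cl}_H(m+n-1)$ under the $S_{m+n-1}$-action (Lemma \ref{lemact}), exploiting the observation that $\circ_k$ is a relabeled $\circ_1$. First I would fix a permutation $\sigma \in S_n$ with $\sigma(1)=k$ that fixes $\{k+1,\dots,n\}$; the natural choice is the cycle $\sigma = (1\,k\,k{-}1\,\cdots\,2)$, which moves the insertion group $k$ to the front while shifting $1,\dots,k-1$ forward by one. Let $\pi := \sigma \circ_1 \mathrm{id}_{S_m} \in S_{m+n-1}$ be its vertex-level lift defined through \eqref{sigcircitau} and \eqref{permutation comp}; a direct computation with \eqref{permutation comp} shows that $\pi$ carries the block $\{1,\dots,m\}$ to $\{k,\dots,k+m-1\}$, sends $\{m+1,\dots,m+k-1\}$ to $\{1,\dots,k-1\}$, and fixes $\{k+m,\dots,m+n-1\}$. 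The target identity is
\begin{align*}
Y \circ_k X = \bigl(Y^{\sigma} \circ_1 X\bigr)^{\pi^{-1}} ,
\end{align*}
which is the $k$-instance of the equivariance relation \eqref{equieq}; since equivariance for $\mathcal{P}^{cl}_H$ is not yet available at this point, I would verify this identity directly by comparing the explicit formulas \eqref{circleatk} and \eqref{circle1}.

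The first step is to match the graph data. Applying the coequivariance of the cocomposition (Proposition \ref{graphcoequivariance}) to the tuple $(m,1,\dots,1)$ and the permutation $\sigma$ shows that, for $\Gamma \in G(m+n-1)$, the subgraph $\Delta_1^{m1\dots1}(\pi^{-1}\Gamma)$ equals $\Delta_k^{1\dots1m1\dots1}(\Gamma)$ and that $\Delta_0^{m1\dots1}(\pi^{-1}\Gamma)$ is the relabeling of $\Delta_0^{1\dots1m1\dots1}(\Gamma)$ induced by $\sigma$. Consequently the data $f_{i\ell}, x_i$ extracted from $X$ and $g_{jl}, y_j$ extracted from $Y$ in \eqref{circleatk} are exactly those entering the $\circ_1$-product of $Y^{\sigma}$ and $X$ on $\pi^{-1}\Gamma$. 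Next I would match the remaining bookkeeping: external connectedness is a property of the unlabeled graph and is therefore preserved by $\pi$, so the antipode actions $f_{i\ell(-2)}\cdot(-)$ agree on both sides (this is also the relabeling-invariance underlying Lemma \ref{disjointk}); the component-reordering permutation $\rho_k^{\Gamma}$ of \eqref{rhokga} is precisely the permutation $\widetilde{\pi^{-1}}$ of connected components induced by $\pi^{-1}$ via \eqref{tildesigma}; and the sign $(-1)^{p_k^X}$ of \eqref{signcircle} is exactly the Koszul--Quillen factor $\epsilon_v$ produced in \eqref{permuvec2} when $\pi^{-1}$ moves $X$ past $v_1,\dots,v_{k-1}$. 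With these identifications the two formulas coincide and the displayed identity holds.

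Granting the identity, the conclusion is immediate: $Y^{\sigma}\in\mathcal{P}^{cl}_H(n)$ by Lemma \ref{lemact}, hence $Y^{\sigma}\circ_1 X\in\mathcal{P}^{cl}_H(m+n-1)$ by Proposition \ref{welldefined1}, and applying the action $(-)^{\pi^{-1}}$ keeps us inside $\mathcal{P}^{cl}_H(m+n-1)$ by Lemma \ref{lemact} again; thus $Y\circ_k X\in\mathcal{P}^{cl}_H(m+n-1)$. As an alternative, one can instead repeat verbatim the three verifications in the proof of Proposition \ref{welldefined1}---the two cycle conditions and componentwise $H$-linearity---with the inserted block $\{k,\dots,k+m-1\}$ in place of $\{1,\dots,m\}$. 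The only new feature is the sign \eqref{signcircle}, but it depends only on $p(X)$ and on $p(v_1),\dots,p(v_{k-1})$, which are unchanged both when an edge is deleted from a cycle and when $H$ acts on a vertex; hence the sign factors out of the sum $\sum_{e\in C}$ in \eqref{cycle2} and commutes past the $H$-action, so neither cycle condition nor $H$-linearity is affected.

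The step I expect to be the main obstacle is the combinatorial matching in the second paragraph---specifically, confirming that $\rho_k^{\Gamma}$, defined piecewise in \eqref{rhokga}, is indeed the component permutation $\widetilde{\pi^{-1}}$ coming from \eqref{tildesigma}, and that the Koszul--Quillen signs on the two sides genuinely agree. This is purely organizational but delicate, because the lowest-vertex-first labeling convention for connected components and the block-lift convention $\sigma\circ_1\mathrm{id}$ must be reconciled carefully; the explicit computation in Example \ref{ex3.11} serves as a useful consistency check for both $\rho_k^{\Gamma}$ and the sign.
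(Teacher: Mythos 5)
Your proposal is correct and takes essentially the paper's own route: the paper gives no separate argument for Proposition \ref{welldefinedk}, merely remarking that it holds ``analogously to the proofs of Lemma \ref{disjoint} and Proposition \ref{welldefined1}'' because the $\circ_k$-product is a $\circ_1$-product after relabeling the vertices of $\Gamma$, possibly up to a sign change. Your two routes --- the formal reduction $Y \circ_k X = (Y^{\sigma} \circ_1 X)^{\pi^{-1}}$ via Lemma \ref{lemact}, Proposition \ref{welldefined1} and graph coequivariance, and the verbatim repetition of the three verifications with the shifted block --- are exactly that remark made precise (with the caveat, which you yourself flag, that matching $\rho_k^{\Gamma}$ and the Koszul--Quillen sign also involves the inner reordering $\widetilde{\sigma}$ coming from $(Y^{\sigma})^{\Delta_0^{m1\dots1}(\pi^{-1}\Gamma)}$, not $\widetilde{\pi^{-1}}$ and $\epsilon_v(\pi^{-1})$ alone).
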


Now we have definitions of $\circ_k$-products for all $k \geq 1$. Then, for any $Y \in \mathcal{P}(n)$ and $X_j \in \mathcal{P}(m_j)$ $(1\le j \le n)$, we define the composition $Y(X_1 \otimes \dots \otimes X_n)$ by (\ref{compcircle}). We will show in the following subsection that this definition makes sense by verifying the associativity axiom \eqref{assoeq}.

\subsection{Proof that $\mathcal{P}^{cl}_H$ is an operad}
In this subsection, we prove the following main theorem of the paper:
\begin{theorem}\label{firstmainthm}
Let $H$ be a cocommutative Hopf algebra, and $V$ be a vector superspace that is also a left $H$-module. Then the vector superspaces $\mathcal{P}^{cl}_H(n)$, $n \geq 0$, defined in \eqref{pclhdef}, with the actions of the symmetric groups $S_n$ given by \eqref{snaction} and the composition maps given by \eqref{compcircle}, \eqref{circleatk}, form an operad
called the \emph{generalized classical operad}.
\end{theorem}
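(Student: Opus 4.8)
The plan is to invoke the $\circ$-product reformulation of the operad axioms from Section \ref{circleprod}: since the compositions of $\mathcal{P}^{cl}_H$ are defined through \eqref{compcircle} out of the $\circ_k$-products, since Propositions \ref{welldefined1} and \ref{welldefinedk} already guarantee that each $Y \circ_k X$ lands in $\mathcal{P}^{cl}_H(m+n-1)$, and since Lemma \ref{lemact} shows that $\sigma \mapsto Y^\sigma$ is a right $S_n$-action, it remains only to verify the three equivalent identities \eqref{unityeq}, \eqref{equieq}, and \eqref{assoeq}. I would treat them in increasing order of difficulty.

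For the unity axiom \eqref{unityeq}, I would substitute $1 = \text{id}_V$ into \eqref{circle1} and \eqref{circleatk}. When $X = 1 \in \mathcal{P}^{cl}_H(1)$, the inner graph $\Delta_k^{1\dots1}(\Gamma)$ is a single vertex, so $s = 1$ and $X^{\bullet}(v_k) = 1 \otimes_H v_k$; thus the only factor $f$ equals $1$, giving $f_{(1)} = f_{(-2)} = 1$, so that the coproduct multiplications and the external-connectedness action are both trivial, and \eqref{circleatk} returns $Y^\Gamma$ unchanged. The case $1 \circ_1 Y$ is even more immediate, since here the outer graph $\Delta_0^{m}(\Gamma)$ is a single clasped vertex on which the identity is evaluated.

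For the equivariance axiom \eqref{equieq}, the key tool is the coequivariance of the cocomposition of graphs (Proposition \ref{graphcoequivariance}). Unwinding both sides through the definition of the $S_n$-action \eqref{snaction} and the formula for $\circ_k$, the relabelling of the large graph $\Gamma$ by $\sigma \circ_i \tau$ (see \eqref{sigcircitau}) matches, via Proposition \ref{graphcoequivariance}, the simultaneous relabelling of the inner graph by $\tau$ and of the outer graph by $\sigma$; the induced permutations of connected components (the $\widetilde{\cdot}$ of \eqref{tildesigma} and the $\rho_k^\Gamma$ of \eqref{rhokga}) then fit together, while the Koszul--Quillen signs of \eqref{sign3} and \eqref{signcircle} account for the parities. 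I expect this to be a careful but essentially bookkeeping argument.

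The associativity axiom \eqref{assoeq} is the main obstacle. The essential case is the nested one, $i \le j < i+n$, which via the equivariance just established and the reduction of each $\circ_k$ to a relabelled $\circ_1$ (recorded after Proposition \ref{welldefinedk}) amounts to the $\circ_1$-associativity $(Z \circ_1 Y) \circ_1 X = Z \circ_1 (Y \circ_1 X)$. I would verify this by comparing both sides graph by graph using the coassociativity of the cocomposition (Proposition \ref{graphcoasso}): parts (1)--(3) there identify the three-stage clasping on the two sides, so that $X$, $Y$, $Z$ are evaluated on matching subgraphs. The delicate point is reconciling the Hopf-algebraic data: the iterated coproducts $\Delta^{(s-1)}$ produced at the two stages must be merged by the coassociativity \eqref{coas}--\eqref{itcop}, and the antipode factors $f_{(-2)}$ must be shown to act on the correct external vertices, which requires that external connectedness be compatible with the iterated clasping of Proposition \ref{graphcoasso}; they then recombine through \eqref{identity1} exactly as in the proof of Proposition \ref{welldefined1}. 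The two parallel cases ($j < i$ and $i+n \le j$), where $X$ and $Y$ are inserted into disjoint groups, I would settle by the disjointness of the supports of the two antipode actions (Lemmas \ref{disjoint} and \ref{disjointk}), which makes the insertions commute and yields the sign $(-1)^{p(Y)p(X)}$ together with the stated index shift; as remarked in Section \ref{circleprod}, the third case also follows from the first after flipping the equality.
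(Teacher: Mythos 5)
Your overall architecture --- reduce everything to the $\circ$-product axioms, cite Propositions \ref{welldefined1}, \ref{welldefinedk} and Lemma \ref{lemact}, then verify \eqref{unityeq}, \eqref{equieq}, \eqref{assoeq} --- is the same as the paper's, and your treatment of unity and of equivariance via Proposition \ref{graphcoequivariance} matches the paper's Lemma \ref{operadeq3} in substance. The genuine gap is in the parallel case of associativity, $1\le j<i$. You propose to settle it by ``disjointness of the supports of the two antipode actions,'' citing Lemmas \ref{disjoint} and \ref{disjointk}; but those lemmas only assert that, \emph{within a single composition}, the factors $f_{ik(-2)}$ and $f_{il(-2)}$ attached to \emph{different connected components of the same inserted graph} act on disjoint vertices. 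They say nothing about the interaction between the $f$'s coming from $X$ and the $g$'s coming from $Y$ in an iterated composition, and that interaction is where the difficulty sits: although the vertex blocks of $X$ and $Y$ are disjoint when $j<i$, the ambient graph $\Gamma$ may contain a path joining the subgraph $\Gamma_2$ (on $X$'s vertices) to the subgraph $\Gamma_1$ (on $Y$'s vertices). In that situation a vertex on the connecting path receives antipode actions from \emph{both} an $f_{\alpha k(-2)}$ and a $g_{\beta p(-2)}$, and on the coefficient side the factors $f_{\alpha k(1)}$, $g_{\beta p(1)}$ and the $h$'s from $Z$ get multiplied in different orders on the two sides of the identity. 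Since $H$ is only cocommutative --- not commutative --- these orders cannot simply be exchanged, so ``the insertions commute'' is precisely what fails to be automatic.

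The paper's Lemma \ref{operadeq1} handles this by splitting into two sub-cases: when $\Gamma_1$ and $\Gamma_2$ are disconnected in $\Gamma$, your disjointness argument is correct and is what the paper does; when they are connected (necessarily by a unique path, by acyclicity), the paper exploits the freedom in representing elements of $(H\otimes\cdots\otimes H)\otimes_H V$ (cf.\ \eqref{tool1}, \eqref{identity1}) to normalize $f_{\alpha k_2}=1$ and $g_{\beta k_1}=1$ on the components carrying the connecting path, after which all mixed products and double actions become trivial. Any complete proof must include this (or an equivalent) device, and your sketch omits it. Your nested case $i\le j<i+n$ is closer to the mark: the paper's Lemma \ref{operadeq2} indeed runs on graph coassociativity (Proposition \ref{graphcoasso}), Hopf coassociativity, and cocommutativity, though there too the same normalization trick --- not merely \eqref{identity1} --- is used, and the connected/disconnected dichotomy absent from your sketch is where most of the actual work lies.
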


To prove the theorem, we will verify (\ref{assoeq}), (\ref{unityeq}), (\ref{equieq}), which will be done in the following lemmas. The unity axiom (\ref{unityeq}) is obvious, so we focus on the associativity and equivariance axioms. Moreover, since the first and the third equations in (\ref{assoeq}) are equivalent, we only need to prove the first and second. 
Throughout the rest of this subsection, let
\begin{align*}
X \in \mathcal{P}^{cl}_H(m), \quad Y \in \mathcal{P}^{cl}_H(n), \quad Z \in \mathcal{P}^{cl}_H(l).
\end{align*}

\begin{lemma}\label{operadeq1}
We have $(Z \circ_i Y) \circ_j X =(-1)^{p(Y)p(X)}(Z \circ_j X) \circ_{i+m-1} Y$ for $1 \leq j < i$, i.e., the first identity in \eqref{assoeq} holds.
\end{lemma}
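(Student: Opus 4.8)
The plan is to verify the identity componentwise on graphs. Both sides lie in $\mathcal{P}^{cl}_H(l+m+n-2)$, so it suffices to show that $\bigl((Z\circ_i Y)\circ_j X\bigr)^\Gamma = (-1)^{p(Y)p(X)}\bigl((Z\circ_j X)\circ_{i+m-1}Y\bigr)^\Gamma$ for every $\Gamma \in G(l+m+n-2)$. Graphs containing a cycle give zero on both sides by the first cycle condition, so I may assume $\Gamma \in G_0(l+m+n-2)$. I would then expand each side by applying the definition \eqref{circleatk} twice and reading off the relevant subgraphs through the cocomposition maps: the left-hand side requires $X$ on $\Delta_j^{1\dots m\dots 1}(\Gamma)$ and then $Y$, $Z$ on the two cocompositions of the contracted graph $\Delta_0^{1\dots m\dots 1}(\Gamma)$, while the right-hand side requires $Y$ on $\Delta_{i+m-1}^{1\dots n\dots 1}(\Gamma)$ and then $X$, $Z$ on the cocompositions of $\Delta_0^{1\dots n\dots 1}(\Gamma)$.

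The crucial observation is that, because $j<i$, the block of vertices $\{j,\dots,j+m-1\}$ carrying $X$ and the block $\{i+m-1,\dots,i+m+n-2\}$ carrying $Y$ are \emph{disjoint} in $\Gamma$. The two expansions above perform the same pair of contractions on $\Gamma$ in opposite orders, so coassociativity of the cocomposition (Proposition \ref{graphcoasso}), applied to the partition of $V(\Gamma)$ into the $X$-block, the $Y$-block, and the remaining singletons, shows that both orders produce the same three subgraphs on which $X$, $Y$, and $Z$ are evaluated. In particular, the graph fed to $Z$, obtained by clasping both blocks, is independent of the order; hence the families $\{f_{ik}\}$ from $X$, $\{g_{jl}\}$ from $Z$, and the intermediate factors from $Y$ occurring in \eqref{circleatk} are literally the same on the two sides.

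It then remains to match the Hopf-algebraic factors, the reorderings of tensor components, and the signs. Since the two blocks are disjoint, Lemmas \ref{disjoint} and \ref{disjointk} guarantee that the external-connectedness actions of the $f_{(-2)}$'s coming from $X$ and those coming from $Y$ act on disjoint sets of the remaining inputs and thus commute, while coassociativity of the coproduct \eqref{coas} identifies the combined coproduct factors attached to the clasped vertex of $Z$ on both sides; the component-permutations $\rho^\Gamma_k$ of \eqref{rhokga} agree because the connected components attached to the two disjoint blocks are permuted independently, with their counts controlled by Lemma \ref{numbercck}. Finally, comparing the two sign factors of \eqref{signcircle}: on the left the inner composition evaluates $Y$ with the output $x_i(v_j\otimes\cdots\otimes v_{j+m-1})$ of $X$ sitting at a position before $i$, contributing an extra $p(X)$ to the parity that multiplies $p(Y)$, whereas on the right $Y$ is inserted before $X$ is applied; all other parity contributions coincide, so the discrepancy is exactly $(-1)^{p(X)p(Y)}$, as claimed. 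I expect the main obstacle to be the bookkeeping of the permutations $\rho^\Gamma_k$ — checking that the $H^{\otimes s}$ tensor factors land in matching slots on both sides — rather than any conceptual difficulty, since the disjointness forced by $j<i$ makes the two insertions genuinely independent.
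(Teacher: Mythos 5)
Your proposal contains a genuine gap: you conflate disjointness of the two \emph{vertex blocks} with independence of the two \emph{insertions}. It is true that $j<i$ forces the blocks $\{j,\dots,j+m-1\}$ (carrying $X$) and $\{i+m-1,\dots,i+m+n-2\}$ (carrying $Y$) to be disjoint as vertex sets, but the induced subgraphs $\Gamma_2 = \Delta_j^{1\dots 1m1\dots 1}(\Gamma)$ and $\Gamma_1$ can still be joined in $\Gamma$ by an edge, or by a path through vertices outside both blocks. In that situation the two insertions are \emph{not} independent: on one side a factor $g_{\beta k_1(-2)}$ coming from $Y$ acts (via external connectedness) on the output $x_\alpha(v_j\otimes\cdots\otimes v_{j+m-1})$ of $X$, on the other side a factor $f_{\alpha k_2(-2)}$ coming from $X$ acts on the output of $Y$, both families can act on the intermediate vertices of the connecting path, and the coproduct components $f_{\alpha k(1)}$ and $g_{\beta p(1)}$ can land in the same tensor slot of $H^{\otimes(s+t+r-2)}$ and multiply. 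Your appeal to Lemmas \ref{disjoint} and \ref{disjointk} does not cover this: those lemmas assert disjointness of the actions of \emph{different connected components of one and the same inserted element}, not of the actions coming from the two different insertions $X$ and $Y$.

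This connected case is exactly the second (and harder) half of the paper's proof. There, one first observes that there can be only \emph{one} path joining $\Gamma_1$ and $\Gamma_2$ (two paths would create a cycle in $\Gamma_0$, killing both sides), and then uses the $\otimes_H$-ambiguity in the representations \eqref{xgamma2}, \eqref{ygamma1} to normalize $f_{\alpha k_2}=1$ and $g_{\beta k_1}=1$, where $k_2$ and $k_1$ index the connected components of $\Gamma_2$ and $\Gamma_1$ meeting the connecting path. With this normalization all the cross interactions listed above become trivial, and the comparison reduces to the bookkeeping of the disjoint case. Your argument, as written, proves only the paper's first case (when $\Gamma_1$ and $\Gamma_2$ lie in different connected components of $\Gamma$); to close the gap you would need either the paper's normalization trick or some equivalent device handling the cross-terms between the two insertions.
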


\begin{proof}
By Proposition \ref{welldefinedk}, both $(Z \circ_i Y) \circ_j X$ and $(Z \circ_j X) \circ_{i+m-1} Y$ are elements of $P^{cl}_H(m+n+l-2)$. 
Given a graph $\Gamma \in G(m+n+l-2)$, we introduce the following notation:
\begin{align*}
\Gamma_2 &:= \Delta_j^{1 \dots 1m1 \dots 1}(\Gamma) \in G(m), & \overline{\Gamma}_1 &:= \Delta_0^{1 \dots 1m1 \dots 1}(\Gamma) \in G(n+l-1), \\
\Gamma_1 &:= \Delta_i^{1 \dots 1n1 \dots 1}(\overline{\Gamma}_1) \in G(n), & \Gamma_0 &:= \Delta_0^{1 \dots 1n1 \dots 1}(\overline{\Gamma}_1) \in G(l),
\end{align*}
where $m$ appears at the $j$-th position and $n$ appears at the $i$-th position in the superscripts of $\Delta$.

Note that in order to compute $((Z \circ_i Y) \circ_j X)^{\Gamma}$, we need to evaluate $X^{\Gamma_2}$, $Y^{\Gamma_1}$ and $Z^{\Gamma_0}$. Assume that there are $r, t, s$ connected components in $\Gamma_0, \Gamma_1, \Gamma_2$, respectively, and write
\begin{align}
    X^{\Gamma_2}(v) &= \sum_{\alpha} (f_{\alpha1} \otimes \dots \otimes f_{\alpha s}) \otimes_H x_\alpha(v),
    & v &\in V^{\otimes m}, \label{xgamma2}\\
    Y^{\Gamma_1}(w) &= \sum_{\beta} (g_{\beta 1} \otimes \dots \otimes g_{\beta t}) \otimes_H y_\beta(w),
    & w &\in V^{\otimes n}, \label{ygamma1}\\
    Z^{\Gamma_0}(u) &= \sum_{\gamma} (h_{\gamma 1} \otimes \dots \otimes h_{\gamma r}) \otimes_H z_\gamma(u),
    & u &\in V^{\otimes l} \label{zgamma0}.
\end{align}
If $\Gamma_1$ is clasped into a vertex of the $q_1$-th connected component of $\Gamma_0$, then we compute by (\ref{circleatk}):
\begin{align}\label{ziprody}
    (&Z \circ_i Y)^{\overline{\Gamma}_1}(v_1 \otimes \dots \otimes v_{n+l-1}) \notag \\
   &= (-1)^{p_i^Y}\sum_{\beta,\gamma}\rho^{\overline{\Gamma}_1}_{i}\bigl(h_{\gamma1} \otimes \dots \otimes h_{\gamma\, q_1-1} \otimes g_{\beta1(1)} h_{\gamma q_1(1)} \otimes \dots \otimes g_{\beta t(1)} h_{\gamma q_1(t)}  \notag \\
    & \;\;\otimes h_{\gamma\, q_1+1} \dots \otimes h_{\gamma r} \bigr) \otimes_H z_\gamma \bigl((g_{\beta 1(-2)} \otimes\dots\otimes g_{\beta t(-2)}) \cdot(v_1 \otimes \dots \otimes v_{i-1} \notag \\
    & \;\; \otimes y_\beta(v_i \otimes \dots \otimes v_{i+n-1}) \otimes v_{i+n} \otimes \dots \otimes v_{n+l-1})\bigr),
\end{align}
where $p_i^Y$ is given by (\ref{signcircle}). 
To further compute $((Z \circ_i Y) \circ_j X)^{\Gamma}$, we need to consider the relations between $\Gamma_1$ and $\Gamma_2$, hence we split into the following two cases. 

In the first case, assume that $\Gamma_1$ and $\Gamma_2$ are disjoint after they are clasped into two vertices in $\Gamma_0$, that is there exists no path in $\Gamma$ connecting any two connected components of $\Gamma_1$ and $\Gamma_2$ as shown in the graph below. The graph does not show all possible edges; only edges between $\Gamma_2$ and $\Gamma_1$ are not allowed.

\vspace{4mm}
\begin{center}
\begin{tikzpicture}
\tikzset{>={Latex[width=2mm,length=2mm]}}
\tikzstyle{vertex} =[circle, fill = black,inner sep=0pt,minimum size=1mm,scale=2.5]
\tikzset{decorate sep/.style 2 args=
{decorate,decoration={shape backgrounds,shape=circle,shape size=#1,shape sep=#2}}}

$\Gamma$

\node[vertex](v3) at (3,0){};
\node[circle, fill = white,label=below:$\Gamma_2$](v45) at (4.5,-2){};
\node[vertex](v6) at (6,0){};
\node[vertex](v8) at (8,0){};
\node[circle, fill = white,label=below:$\Gamma_1$](v9) at (9.5,-2){};
\node[vertex](v10) at (10,0){};

\draw[decorate sep={0.6mm}{1.6mm},fill] (1,0) -- (2.5,0); 
\draw[decorate sep={0.6mm}{1.6mm},fill] (3.5,0) -- (5.5,0); 
\draw[decorate sep={0.6mm}{1.6mm},fill] (6.5,0) -- (7.5,0); 
\draw[decorate sep={0.6mm}{1.6mm},fill] (8.5,0) -- (9.5,0); 
\draw[decorate sep={0.6mm}{1.6mm},fill] (10.5,0) -- (12,0); 

\draw[dashed] (4.5,0) circle[radius=1.85cm];
\draw[dashed] (9,0) circle[radius=1.30cm];
\end{tikzpicture}
\end{center}

In this case, if $\Gamma_2$ is clasped into a vertex of the $\overline{q}_2$-th connected component of $\overline{\Gamma}_1$, then $\overline{q}_2$ corresponds to some $q_2$-th connected component in $\Gamma_0$ that is different from the $q_1$-th. We have:
\begin{align}\label{ziyjx}
    & ((Z \circ_i Y) \circ_j X)^{\Gamma}(v_1 \otimes \dots \otimes v_{m+n+l-2}) \notag \\
    &= (-1)^{p_{i+m-1}^Y+p_j^X} \sum_{\alpha,\beta,\gamma} \rho^\Gamma_{j}\bigl((
    \underbrace{1 \otimes \dots \otimes 1}_{\overline{q}_2-1} \otimes f_{\alpha1(1)} \otimes \dots \otimes f_{\alpha s(1)} \otimes 1 \dots \otimes 1)  \notag \\
    & (\Delta^{(s-1)}_{\overline{q}_2} \cdot \rho^{\overline{\Gamma}_1}_{i}(h_{\gamma1} \otimes \dots \otimes h_{\gamma\, q_1-1} \otimes g_{\beta1(1)} h_{\gamma q_1(1)} \otimes \dots \otimes g_{\beta t(1)} h_{\gamma q_1(t)} \otimes h_{\gamma\, q_1+1} \notag \\
    & \otimes \dots \otimes h_{\gamma r}))\bigr) \otimes_H z_\gamma \bigl((f_{\alpha 1(-2)} \otimes\cdots \otimes f_{\alpha s(-2)}) \cdot (g_{\beta1(-2)} 
    \otimes\cdots \otimes g_{\beta t(-2)}) \notag \\
    & \cdot(v_1 \otimes \dots \otimes v_{j-1} \otimes x_\alpha(v_j \otimes \dots \otimes v_{j+m-1}) \otimes v_{j+m} \otimes \dots \otimes v_{i+m-2} \notag \\
    & \otimes y_\beta(v_{i+m-1} \otimes \dots \otimes v_{i+m+n-2}) \otimes v_{i+m+n-1} \otimes \dots \otimes v_{m+n+l-2})\bigr).
\end{align}
Note that 
$\Delta^{(s-1)}_{\overline{q}_2}$ only applies on the $\overline{q}_2$-th connected component after the permutation $\rho^{\overline{\Gamma}_1}_{i}$. Since $\Gamma_1$ and $\Gamma_2$ are disconnected in $\Gamma$, $\overline{q}_2$ is not any connected component corresponding to $h_{\gamma q_1 (p)}$ for any $p$. Similarly, one sees that $f_{\alpha k (1)}$ multiplies with $h_{\gamma a (k)}$ from $\Delta^{(s-1)}(h_{\gamma a})$ (here $h_{\gamma a}$ represents the connected component in $\Gamma_0$ that becomes the $\overline{q}_2$-th connected component in $\overline{\Gamma}_1$ under $\rho_{i}^{\overline{\Gamma}_1}$), while $f_{\alpha k(1)}$'s and $g_{\beta p(1)}$'s do not multiply together. Eventually, $\rho^\Gamma_{j}$ permutes the connected components to their correct positions in $\Gamma$. In $(\ref{ziprody})$, $g_{\beta p(-2)}$'s act only on vectors that are not within $y_\beta$.
They do not act on vectors within $x_\alpha$ in (\ref{ziyjx}) either; otherwise we have a path between $\Gamma_1$ and $\Gamma_2$ in $\Gamma$. Thus, $g_{\beta p(-2)}$'s only act on vectors not in $x_\alpha$ and $y_\beta$, which is also true for $f_{\alpha k(-2)}$'s. The last observation is that the vectors not within $x_\alpha$ and $y_\beta$ can have at most one action: either from $g_{\beta p(-2)}$ or $f_{\alpha k(-2)}$; otherwise we have a path between $\Gamma_1$ and $\Gamma_2$ in $\Gamma$. These observations imply that we do not need to worry about the commutativity of $H$.

Next, we compute the right hand side $((Z \circ_j X) \circ_{i+m-1} Y)^\Gamma$. Now $X$ will always be evaluated on $\Gamma_2$, $Y$ on $\Gamma_1$, and $Z$ on $\Gamma_0$. Let $\widetilde{\Gamma}_1 =\Delta_0^{1 \dots 1n1 \dots 1}(\Gamma)$ where $n$ appears at the $(i+m-1)$-th position. Note that $\Gamma_2$ will be clasped into a vertex in the $q_2$-th connected component of $\Gamma_0$, and we have:
\begin{align}\label{zjprodx}
    & (Z \circ_j X)^{\widetilde{\Gamma}_1}(v_1 \otimes \dots \otimes v_{m+l-1}) \notag \\
    &= (-1)^{p_j^X}\sum_{\beta,\gamma} \rho^{\widetilde{\Gamma}_1}_{j} \bigl(h_{\gamma1} \otimes \dots \otimes h_{\gamma\, q_2-1} \otimes f_{\alpha1(1)} h_{\gamma q_2(1)} \otimes \dots \otimes f_{\alpha s(1)} h_{\gamma q_2(s)} \notag \\
    & \;\otimes h_{\gamma\, q_2+1} \otimes\dots \otimes h_{\gamma r} \bigr) \otimes_H z_\gamma \bigl((f_{\alpha 1(-2)} \otimes\cdots\otimes f_{\alpha s(-2)}) \cdot(v_1 \otimes \dots \otimes v_{j-1} \notag \\
    & \;\otimes x_\alpha(v_j \otimes \dots \otimes v_{j+m-1}) \otimes v_{j+m} \otimes \dots \otimes v_{m+l-1})\bigr).
\end{align}
Suppose that $\Gamma_1$ is clasped into a vertex in the $\widetilde{q}_1$-th connected component of $\widetilde{\Gamma}_1$. This will correspond to the $q_1$-th connected component of $\Gamma_0$ as for the left-hand side. Thus
\begin{align}\label{zjxim1y}
    & ((Z \circ_j X) \circ_{i+m-1} Y)^{\Gamma}(v_1 \otimes \dots \otimes v_{m+n+l-2}) \notag \\
    &= (-1)^{p_{i+m-1}^Y+p_j^X+p(Y)p(X)} \sum_{\alpha,\beta,\gamma} \rho^\Gamma_{i+m-1}\bigl((
    \underbrace{1 \otimes \dots \otimes 1}_{\widetilde{q}_1-1} \otimes g_{\beta1(1)} \otimes \dots\otimes g_{\beta t(1)} \notag \\
    & \;\;\otimes 1 \otimes \dots \otimes 1)(\Delta^{(t-1)}_{\widetilde{q}_1} \cdot \rho^{\widetilde{\Gamma}_1}_{j} 
    (h_{\gamma 1} \otimes \dots \otimes h_{\gamma\, q_2-1} \otimes f_{\alpha1(1)} h_{\gamma q_2(1)} \otimes \cdots \notag \\
    & \;\;\otimes f_{\alpha s(1)} h_{\gamma q_2(s)} \otimes h_{\gamma\, q_2+1} \dots \otimes h_{\gamma r} ))\bigr) \otimes_H  z_\gamma \bigl((g_{\beta1(-2)} \otimes\cdots\otimes g_{\beta t(-2)}) \notag \\
    & \;\;\cdot (f_{\alpha 1(-2)} \otimes\cdots \otimes f_{\alpha s(-2)}) \cdot(v_1 \otimes \dots \otimes v_{j-1} \otimes x_\alpha(v_j \otimes \dots \otimes v_{j+m-1}) \notag \\ 
    & \;\;\otimes v_{j+m} \otimes \dots \otimes v_{i+m-2} \otimes y_\beta( v_{i+m-1} \otimes \dots \otimes v_{i+m+n-2}) \notag \\
    & \;\;\otimes v_{i+m+n-1} \otimes \dots \otimes v_{m+n+l-2})\bigr).
\end{align}
The action of $f_{\alpha k(-2)}$'s and $g_{\beta p(-2)}$'s on the vectors are the same as in (\ref{ziyjx}). Each vector gets at most one action; hence the $z_\gamma$ part 
is the same as in (\ref{ziyjx}). For the coefficients representing each connected component, we have the following observations: 
\begin{enumerate}
\item
$h_{\gamma b}$ (which represents the connected component in $\Gamma_0$ that becomes the $\widetilde{q}_1$-th connected component in $\widetilde{\Gamma}_1$ under $\rho_{j}^{\widetilde{\Gamma}_1}$) is distinct from $h_{\gamma q_2}$ since $\Gamma_1$ and $\Gamma_2$ are disjoint.
\item
$\Delta^{(s-1)}(h_{\gamma q_2})$ multiplies with $f_{\alpha 1(1)} \otimes \dots \otimes f_{\alpha s(1)}$ and then gets permuted twice, by $\rho^{\widetilde{\Gamma}_1}_{j}$ and $\rho^\Gamma_{i+m-1}$, while in (\ref{ziyjx}), $\Delta^{(s-1)}(h_{\gamma a})$ will get permuted by $\rho^\Gamma_{j}$. 
\item
These two parts will be the same, since $h_{\gamma q_2}$ and $h_{\gamma a}$ are essentially the same, 
both representing the connected component in $\Gamma_0$ that contains the vertex clasped from $\Gamma_2$. They are eventually identified as connected components of the same graph $\Gamma$. 
\item
Similar arguments work for $h_{\gamma q_1}$ and $h_{\gamma b}$. The connected components that are disjoint from $\Gamma_1$ and $\Gamma_2$ will be the same in both cases, since they are eventually identified as connected components of $\Gamma$. 
\end{enumerate}

The above arguments prove that $((Z \circ_i Y) \circ_j X)^\Gamma =(-1)^{p(Y)p(X)} ((Z \circ_j X) \circ_{i+m-1} Y)^\Gamma$ when $\Gamma_1$ and $\Gamma_2$ are disjoint.
The other case is when $\Gamma_1$ and $\Gamma_2$ are connected. Note that there can be only one path connecting the $k_1$-th connected component of $\Gamma_1$ and the $k_2$-th connected component of $\Gamma_2$; otherwise we have a cycle in $\Gamma_0$. A typical example looks like the graph below, where there may be some vertices $c$ outside $\Gamma_1$ and $\Gamma_2$ on the path from $a$ to $b$.

\vspace{4mm}
\begin{center}
\begin{tikzpicture}
\tikzset{>={Latex[width=2mm,length=2mm]}}
\tikzstyle{vertex} =[circle, fill = black,inner sep=0pt,minimum size=1mm,scale=2.5]
\tikzset{decorate sep/.style 2 args=
{decorate,decoration={shape backgrounds,shape=circle,shape size=#1,shape sep=#2}}}

$\Gamma$

\node[vertex](v3) at (3,0){};
\node[circle, fill = white,label=below:$\Gamma_2$](v45) at (4.5,-2){};
\node[vertex,label=below:$a$](v5) at (5,0){};
\node[vertex,label=below:$c$](v7) at (7,0){};
\node[circle, fill = white,label=below:$\Gamma_1$](v9) at (9.5,-2){};
\node[vertex,label=below:$b$](v8) at (8,0){};
\node[vertex](v10) at (10,0){};

\draw[decorate sep={0.6mm}{1.6mm},fill] (1,0) -- (2.5,0); 
\draw[decorate sep={0.6mm}{1.6mm},fill] (3.5,0) -- (4.5,0); 
\draw[decorate sep={0.6mm}{1.6mm},fill] (5.5,0) -- (6.8,0); 
\draw[decorate sep={0.6mm}{1.6mm},fill] (7.2,0) -- (7.8,0); 
\draw[decorate sep={0.6mm}{1.6mm},fill] (8.5,0) -- (9.5,0); 
\draw[decorate sep={0.6mm}{1.6mm},fill] (10.5,0) -- (12,0); 

\draw [->,thick] (v5) to [out=45,in=135] (v7);
\draw [->,thick] (v7) to [out=45,in=135] (v8);

\draw[dashed] (4.5,0) circle[radius=1.85cm];
\draw[dashed] (9,0) circle[radius=1.30cm];
\end{tikzpicture}
\end{center}

To simplify the calculations, we use the observation that for any left $H$-module $W$ and elements $h_1,\dots,h_n\in H$, $w\in W$,
the expression $(h_1 \otimes \dots \otimes h_n) \otimes_H w$ 
can be rewritten in the form $(\widetilde{h}_1 \otimes \dots \otimes \widetilde{h}_{i-1} \otimes 1 \otimes \widetilde{h}_{i+1} \otimes \dots \otimes \widetilde{h}_n) \otimes_H \widetilde{w}$ for any $i \in \{1,\dots, n\}$, where $\widetilde{h}_j \in H$ and $\widetilde{w} \in W$ are unique (see \eqref{tool1}, \eqref{identity1} and \cite{BDK01}). 
Hence, without loss of generality, we can assume that $f_{\alpha k_2} = 1$ in (\ref{xgamma2}) and $g_{\beta k_1} =1$ in (\ref{ygamma1}). 
Then we compute $(Z \circ_i Y)^{\overline{\Gamma}_1}$ from (\ref{ziprody}) by letting $g_{\beta k_1(1)} = g_{\beta k_1(-2)} =1$. 
For $((Z \circ_i Y) \circ_j X)^{\Gamma}$, one can still use (\ref{ziyjx}) with the following changes:
\begin{enumerate}
    \item $f_{\alpha k_2(1)} = f_{\alpha k_2(-2)} =1$. Note that $g_{\beta k_1 (1)} = 1$ and $\Delta^{(s-1)}_{\overline{q}_2}$ applies on the connected component corresponding to $h_{\gamma q_1 (k_1)}$; hence $f_{\alpha k(1)}$'s and $g_{\beta p(1)}$'s do not multiply together.
    
    \item Note that if a vector $v_l$ in $z_\gamma$ gets both actions from $f_{\alpha k(-2)}$ and $g_{\beta p(-2)}$, then it must be in the component that connects $\Gamma_1$ and $\Gamma_2$, and the actions are from $f_{\alpha k_2(-2)}$ and $g_{\beta k_1(-2)}$, which are trivial. Other vectors can only get one action, either from $f_{\alpha k(-2)}$ or $g_{\beta p(-2)}$; otherwise we get a cycle between $\Gamma_1$ and $\Gamma_2$ in $\Gamma_0$.
\end{enumerate}

The right-hand side $((Z \circ_j X) \circ_{i+m-1} Y)^\Gamma$ can be computed in a similar way. First, $(Z \circ_j Y)^{\widetilde{\Gamma}_1}$ can be obtained by letting $f_{\alpha k_2(1)} = f_{\alpha k_2(-2)} =1$ in (\ref{zjprodx}). Then $((Z \circ_j X) \circ_{i+m-1} Y)^{\Gamma}$ is computed from (\ref{zjxim1y}) with the following changes:
\begin{enumerate}
    \item $g_{\beta k_1(1)} = g_{\beta k_1(-2)} =1$. Note that $f_{\alpha k_2 (1)} = 1$ and $\Delta^{(t-1)}_{\widetilde{q}_1}$ acts on the connected component corresponding to $h_{\gamma q_2 (k_2)}$; hence $f_{\alpha k(1)}$'s and $g_{\beta p(1)}$'s do not multiply together.
    
    \item The second change as we compute $((Z \circ_i Y) \circ_j X)^{\Gamma}$ above also applies here.
\end{enumerate}
Since on both sides we identify the connected components of $\Gamma$ by either $\rho^\Gamma_{j}$ or $\rho^\Gamma_{i+m-1}$, we obtain 
$((Z \circ_i Y) \circ_j X)^\Gamma =(-1)^{p(Y)p(X)} ((Z \circ_j X) \circ_{i+m-1} Y)^\Gamma$ when $\Gamma_1$ and $\Gamma_2$ are connected.
This completes the proof of the lemma.
\end{proof}

\begin{remark}
In the above proof, removing the path connecting $\Gamma_1$ and $\Gamma_2$ (there is one and only one such path) will make these two new subgraphs disjoint. Hence, the case when $\Gamma_1$ and $\Gamma_2$ are connected can be thought of as the disjoint case once $f_{\alpha k_2}$ and $g_{\beta k_1}$ are set to $1$.
\end{remark}

\begin{lemma}\label{operadeq2}
We have $(Z \circ_i Y) \circ_j X = Z \circ_i (Y \circ_{j-i+1} X)$ for $i \leq j < i+n$,
i.e., the second identity in \eqref{assoeq} holds.
\end{lemma}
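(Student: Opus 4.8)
The plan is to mirror the proof of Lemma \ref{operadeq1}, but to exploit that the nesting condition $i \le j < i+n$ places the $X$-block \emph{inside} the block occupied by $Y$; consequently the two sides match directly through the coassociativity of the cocomposition (Proposition \ref{graphcoasso}), with no disjoint/connected case split. Fix $\Gamma \in G(m+n+l-2)$ and keep the notation of the previous proof: on the left I set $\Gamma_2 := \Delta_j^{1\dots1m1\dots1}(\Gamma)$, $\overline{\Gamma}_1 := \Delta_0^{1\dots1m1\dots1}(\Gamma)$, $\Gamma_1 := \Delta_i^{1\dots1n1\dots1}(\overline{\Gamma}_1)$, $\Gamma_0 := \Delta_0^{1\dots1n1\dots1}(\overline{\Gamma}_1)$, so that $X$, $Y$, $Z$ are evaluated on $\Gamma_2$, $\Gamma_1$, $\Gamma_0$ as in \eqref{xgamma2}, \eqref{ygamma1}, \eqref{zgamma0}, and the image lands in $H^{\otimes(s+t+r-2)}\otimes_H V$.

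First I would invoke Proposition \ref{graphcoasso} with the outer tuple $(m_1,\dots,m_l)=(1,\dots,1,n,1,\dots,1)$ ($n$ at position $i$) and the inner tuple $(l_1,\dots,l_{n+l-1})=(1,\dots,1,m,1,\dots,1)$ ($m$ at position $j$). Then $K_k=1$ for $k\ne i$ and $K_i=m+n-1$, and, since $M_{i-1}=i-1$, the three coassociativity identities read
\[
\Delta_0^{m_1\dots m_l}(\overline{\Gamma}_1)=\Delta_0^{K_1\dots K_l}(\Gamma)=\Gamma_0,\quad
\Gamma_1=\Delta_0^{1\dots1m1\dots1}\bigl(\Delta_i^{K_1\dots K_l}(\Gamma)\bigr),\quad
\Gamma_2=\Delta_{j-i+1}^{1\dots1m1\dots1}\bigl(\Delta_i^{K_1\dots K_l}(\Gamma)\bigr),
\]
with $m$ at local position $j-i+1$ in the last two. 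These say precisely that $Z$, $Y$, and $X$ are evaluated on the \emph{same} graphs as on the right-hand side, where $Z$ acts on $\Delta_0^{K_1\dots K_l}(\Gamma)$ and $Y\circ_{j-i+1}X$ on $\Delta_i^{K_1\dots K_l}(\Gamma)\in G(m+n-1)$. With the graph matching settled, I substitute \eqref{xgamma2}--\eqref{zgamma0} into \eqref{circleatk} twice for each side.

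Next I would check that the two resulting elements of $H^{\otimes(s+t+r-2)}\otimes_H V$ agree. On both sides the $H$-factors of $X$ are merged with those of $Y$ through an iterated coproduct and then with those of $Z$; that merging in the order ``$X$ into $Y$, then into $Z$'' coincides with ``$Y$ into $Z$, then $X$'' is exactly the coassociativity of the coproduct \eqref{coas}. For the external actions, Lemma \ref{edgecorrespondence} tracks the edges leaving $\Gamma_2$, leaving $\Gamma_1$, and lying in $\Gamma_0$ bijectively across both claspings, so each vertex outside the $X$-block receives its antipode-twisted action from the same source on either side; a vertex external to both blocks is handled identically via \eqref{tool1} and \eqref{identity1}, as in the connected case of Lemma \ref{operadeq1}. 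The connected-component reorderings produced by the permutations $\rho$ on each side both terminate in the standard ordering of the components of $\Gamma$ fixed by the convention above \eqref{tildesigma}, hence coincide; and the signs match by expanding \eqref{signcircle}, since $p(Y\circ_{j-i+1}X)=p(Y)+p(X)$ yields $p(Y)\bigl(p(v_1)+\dots+p(v_{i-1})\bigr)+p(X)\bigl(p(v_1)+\dots+p(v_{j-1})\bigr)$ on both sides.

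The main obstacle will be the Hopf-algebraic bookkeeping: verifying that the doubly iterated coproducts and the antipode-twisted external actions on vectors outside the $X$-block are rearranged into exactly the same expression on the two sides, keeping precise track of which connected component each $H$-factor is attached to after two successive claspings and permutations. The graph-level identification from Proposition \ref{graphcoasso} is immediate, but aligning the $f_{(-2)}$ and $g_{(-2)}$ actions and the coproduct splittings requires the same careful edge-by-edge and vertex-by-vertex analysis, using Lemma \ref{edgecorrespondence} together with the identities \eqref{tool1} and \eqref{identity1}, that governed the connected case of Lemma \ref{operadeq1}.
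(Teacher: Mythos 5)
Your overall route coincides with the paper's: fix $\Gamma \in G(m+n+l-2)$, match the graphs on which $X$, $Y$, $Z$ are evaluated on the two sides, expand both sides via \eqref{circleatk}, and compare coefficients, actions, permutations, and signs. The graph-matching step, which you make explicit through Proposition \ref{graphcoasso} (the paper leaves it implicit in its definitions of $\Gamma_0$, $\Gamma_1$, $\Gamma_2$, $\overline{\Gamma}_1$), is correct, as are your component count $s+t+r-2$ and your sign computation.

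The genuine gap is at the step you explicitly defer as ``the main obstacle'': the identification of the two resulting Hopf-algebraic expressions. Your sketch attributes this to coassociativity \eqref{coas} plus edge-tracking, but coassociativity alone does not suffice. Concretely, on the left-hand side the single antipode piece $f_{\alpha k(-2)}$ of an $X$-coefficient acts, via the iterated coproduct, on \emph{all} vertices externally connected to the $k$-th component of $\Gamma_2$, whether they lie inside the $Y$-block or outside it; on the right-hand side these actions are split between the two stages, with $f_{\alpha k(-2)}$ acting on the inside vertices at the inner stage \eqref{yx2} and $f_{\alpha k(1)(-2)}$ acting on the outside vertices at the outer stage \eqref{zyx2}. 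Matching the two requires the coassociativity identity $f_{\alpha k(1)(1)} \otimes f_{\alpha k(1)(-2)} \otimes f_{\alpha k(-2)} = f_{\alpha k(1)} \otimes f_{\alpha k(-2)} \otimes f_{\alpha k(-3)}$ \emph{and} the swap $f_{\alpha k(-2)} \otimes f_{\alpha k(-3)} = f_{\alpha k(-3)} \otimes f_{\alpha k(-2)}$, i.e.\ the cocommutativity of $H$, which you never invoke; without it the two sides assign the two coproduct legs to the inside/outside vertices in opposite orders and the claimed equality fails. Relatedly, the normalization device of the paper---using \eqref{tool1}, \eqref{identity1} to set $g_{\beta \overline{q}_1}=1$ and $h_{\gamma q_1}=1$ before comparing---is not optional bookkeeping: without it, a vertex whose path enters the block through the $Y$-part receives $S(f_{\alpha k(2)})\,S(g_{\beta \overline{q}_1(\cdot)(2)})$ on the left but $S(g_{\beta \overline{q}_1(\cdot)(2)})\,S(f_{\alpha k(1)(2)})$ on the right (the antipode is an algebra anti-homomorphism), and $H$ is cocommutative but in general non-commutative (e.g.\ $U(\mathfrak{d})$ for non-abelian $\mathfrak{d}$), so a term-by-term match would break down. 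You gesture at this device by citing \eqref{tool1}, \eqref{identity1} ``as in the connected case'' of Lemma \ref{operadeq1}, but the role it plays, together with the cocommutativity step it enables, is precisely the content of the proof rather than routine bookkeeping that can be left as an obstacle.
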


\begin{proof}
Consider a graph $\Gamma \in G(m+n+l-2)$, and introduce the following notation:
\begin{align*}
    \overline{\Gamma}_1 &:= \Delta_i^{1 \dots 1(m+n-1)1 \dots 1}(\Gamma) \in G(m+n-1), &  \Gamma_2 &:= \Delta_{j-i+1}^{1 \dots 1m1 \dots 1}(\overline{\Gamma}_1) \in G(m), \\
    \Gamma_0 &:= \Delta_0^{1 \dots 1(m+n-1)1 \dots 1}(\Gamma) \in G(l), & \Gamma_1 &:= \Delta_0^{1 \dots 1m1 \dots 1}(\overline{\Gamma}_1) \in G(n), \\
    \overline{\Gamma}_0 &:= \Delta_0^{1 \dots 1m1 \dots 1}(\Gamma) \in G(n+l-1), &&
\end{align*}
where $m+n-1$ is at the $i$-th position and $m$ is at the $(j-i+1)$-th position in the superscripts of $\Delta$, 
except that $m$ is at the $j$-th position in the last cocomposition map defining $\overline{\Gamma}_0$.
Note that $\overline{\Gamma}_0$ is obtained by clasping $\Gamma_2$ in $\Gamma$. 
Since $i \leq j < i+n$, the graph $\Gamma$ looks like this:

\vspace{2mm}
\begin{center}
\begin{tikzpicture}
\tikzset{>={Latex[width=2mm,length=2mm]}}
\tikzstyle{vertex} =[circle, fill = black,inner sep=0pt,minimum size=1mm,scale=2.5]
\tikzset{decorate sep/.style 2 args=
{decorate,decoration={shape backgrounds,shape=circle,shape size=#1,shape sep=#2}}}

$\Gamma$

\node[vertex](v3) at (4,0){};
\node[circle, fill = white,label=below:$\Gamma_2$](v45) at (5.4,1.2){};
\node[vertex](v6) at (6,0){};
\node[vertex](v8) at (8,0){};
\node[circle, fill = white,label=below:$\overline{\Gamma}_1$](v9) at (10,-2){};
\node[vertex](v10) at (10,0){};

\draw[decorate sep={0.6mm}{1.6mm},fill] (1,0) -- (3.5,0); 
\draw[decorate sep={0.6mm}{1.6mm},fill] (4.5,0) -- (5.5,0); 
\draw[decorate sep={0.6mm}{1.6mm},fill] (6.5,0) -- (7.5,0); 
\draw[decorate sep={0.6mm}{1.6mm},fill] (8.5,0) -- (9.5,0); 
\draw[decorate sep={0.6mm}{1.6mm},fill] (10.5,0) -- (13,0); 

\draw[dashed] (7,0) circle[radius=1.3cm];
\draw[dashed] (7,0) circle[radius=3.30cm];
\end{tikzpicture}
\end{center}

As in the proof of Lemma \ref{operadeq1}, $X, Y, Z$ will be evaluated on $\Gamma_2$, $\Gamma_1$, $\Gamma_0$, respectively. 
Using the same notation for the number of connected components and expressions (\ref{xgamma2}), (\ref{ygamma1}), (\ref{zgamma0}), one computes:
\begin{align}\label{ziyaa}
    & (Z \circ_i Y)^{\overline{\Gamma}_0}(v_1 \otimes \dots \otimes v_{n+l-1}) \notag \\
    &= (-1)^{p_i^Y}\sum_{\beta,\gamma}\rho^{\overline{\Gamma}_0}_{i}\bigl((\underbrace{1 \otimes \dots \otimes 1}_{q_1-1} \otimes g_{\beta1(1)} \otimes \dots \otimes g_{\beta t(1)} \otimes 1 \otimes \dots \otimes 1) \notag \\
    & \;\;(h_{\gamma1} \otimes \cdots \otimes \Delta^{(t-1)}(h_{\gamma q_1}) \otimes \dots \otimes h_{\gamma r})\bigr) \, \otimes_H z_\gamma \bigl((g_{\beta 1(-2)} \otimes\cdots\otimes g_{\beta t(-2)}) \notag \\
    & \;\;\cdot(v_1 \otimes \dots \otimes v_{i-1} \otimes y_\beta(v_i \otimes \dots \otimes v_{i+n-1}) \otimes v_{i+n} \otimes \dots \otimes v_{n+l-1})\bigr) \notag \\
    &= (-1)^{p_i^Y}\sum_{\beta,\gamma}\rho^{\overline{\Gamma}_0}_{i}(h_{\gamma1} \otimes \dots \otimes h_{\gamma\, q_1-1} \otimes g_{\beta1(1)} \otimes \dots \otimes g_{\beta t(1)} \otimes h_{\gamma\, q_1+1} \notag \\ 
    & \;\;\otimes \dots \otimes h_{\gamma r}) \otimes_H z_\gamma \bigl((g_{\beta 1(-2)} \otimes\cdots\otimes g_{\beta t(-2)}) \cdot(v_1 \otimes \cdots \otimes v_{i-1}  \notag \\
    & \;\;\otimes y_\beta(v_i \otimes \dots \otimes v_{i+n-1}) \otimes v_{i+n} \otimes \dots \otimes v_{n+l-1})\bigr), 
\end{align}
where the vertex clasped from $\Gamma_1$ is contained in the $q_1$-th connected component of $\Gamma_0$. The last equality is obtained after setting $h_{\gamma q_1} =1$; we can always do that as explained in the proof of Lemma \ref{operadeq1}.

If $\Gamma_2$ is clasped into a vertex in the $\overline{q}_2$-th connected component of $\overline{\Gamma}_0$, one finds $((Z \circ_i Y) \circ_j X)^{\Gamma}$ as follows:
\begin{align}\label{zyx1}
    & ((Z \circ_i Y) \circ_j X)^{\Gamma}(v_1 \otimes \dots \otimes v_{m+n+l-2}) \notag \\
    &=\! (-1)^{p_{i}^Y\!+p_j^X} \!\!\sum_{\alpha,\beta,\gamma} \rho^\Gamma_{j}\bigl((\underbrace{1 \otimes \!\cdots\! \otimes 1}_{\overline{q}_2-1} \otimes f_{\alpha1(1)} \otimes \dots \otimes f_{\alpha s(1)} \otimes 1 \otimes\!\cdots\!\otimes 1) (\Delta^{(s-1)}_{\overline{q}_2} \notag \\
    &\;\;\cdot (\rho^{\overline{\Gamma}_0}_{i}(h_{\gamma1} \otimes \dots\otimes h_{\gamma\, q_1-1} \otimes g_{\beta1(1)} \otimes \dots \otimes g_{\beta t(1)} \otimes h_{\gamma\, q_1+1} \otimes \dots \otimes h_{\gamma r})))\bigr) \notag \\
    &\; \otimes_H z_\gamma \bigl((f_{\alpha 1(-2)} \otimes \dots \otimes f_{\alpha s(-2)}) \cdot (g_{\beta1(-2)} \otimes\dots\otimes g_{\beta t(-2)}) \cdot(v_1 \otimes \dots \otimes v_{i-1} \notag \\
    & \;\otimes y_\beta(v_i \otimes \cdots 
    \otimes v_{j-1} \otimes x_\alpha(v_j \otimes \dots \otimes v_{j+m-1}) \otimes v_{j+m} \otimes \dots \otimes v_{i+m+n-2}) \notag \\
    & \;\otimes v_{i+m+n-1} \otimes \dots \otimes v_{m+n+l-2})\bigr).
\end{align}
Note that in (\ref{zyx1}), since the vertex clasped from $\Gamma_2$ is in $\Gamma_1$, the $\overline{q}_2$-th connected component of $\overline{\Gamma}_0$ corresponds to $g_{\beta l(1)}$ for some $l$. We assume that $g_{\beta l} =1$; hence $\Delta^{(s-1)}_{\overline{q}_2}(g_{\beta l(1)}) = 1 \otimes \dots \otimes 1$ and $f_{\alpha k(1)}$'s and $g_{\beta p(1)}$'s do not multiply together. In $z_\gamma$, $f_{\alpha k(-2)}$'s act only on vectors outside $x_{\alpha}$ and $g_{\beta p(-2)}$'s act only on vectors outside $y_{\beta}$. All vectors outside $y_{\beta}$ can only have at most a single action, either from $f_{\alpha k(-2)}$ or $g_{\beta p(-2)}$; otherwise we get a cycle in $\Gamma_0$. 

Next, we will compute the right-hand side $(Z \circ_i (Y \circ_{j-i+1} X))^\Gamma$. If the vertex clasped from $\Gamma_2$ is contained in the $\overline{q}_1$-th connected component of ${\Gamma}_1$, then we have:
\begin{align}\label{yx2}
    & (Y \circ_{j-i+1} X)^{\overline{\Gamma}_1}(v_1 \otimes \dots \otimes v_{m+n-1}) \notag \\
    &= (-1)^{p_{j-i+1}^X}\sum_{\beta,\gamma}\rho^{\overline{\Gamma}_1}_{j-i+1}\bigl((\underbrace{1 \otimes \dots \otimes 1}_{\overline{q}_1-1} \otimes f_{\alpha1(1)} \otimes \dots \otimes f_{\alpha s(1)} \otimes 1 \otimes\dots \otimes 1) \notag \\
    & \;\; (g_{\beta 1} \otimes \dots \otimes \Delta^{(s-1)}(g_{\beta \overline{q}_1}) \otimes \dots \otimes g_{\beta t})\bigr) \otimes_H y_\beta \bigl((f_{\alpha 1(-2)} \otimes\dots\otimes f_{\alpha s(-2)}) \notag \\
    &  \cdot (v_1 \otimes \dots \otimes v_{j-i} \otimes x_\alpha(v_{j-i+1} \otimes \dots \otimes v_{j-i+m}) \otimes v_{j-i+m+1} \otimes \dots \otimes v_{m+n-1})\bigr) \notag \\
    &= (-1)^{p_{j-i+1}^X}\sum_{\beta,\gamma}\rho^{\overline{\Gamma}_1}_{j-i+1} \bigl(g_{\beta 1} \otimes \dots\otimes g_{\beta\, \overline{q}_1-1} \otimes f_{\alpha 1(1)} \otimes \dots \otimes f_{\alpha s(1)} \otimes g_{\beta\, \overline{q}_1+1} \notag \\
    & \;\;\otimes \dots \otimes g_{\beta t} \bigr) \otimes_H y_\beta \bigl((f_{\alpha 1(-2)} \otimes\dots\otimes f_{\alpha s(-2)}) \cdot(v_1 \otimes \dots 
    \otimes v_{j-i} \notag \\
    & \;\;\otimes x_\alpha(v_{j-i+1} \otimes \dots \otimes v_{j-i+m}) \otimes v_{j-i+m+1} \otimes \dots \otimes v_{m+n-1})\bigr), 
\end{align}
where the last equality is obtained after we set $g_{\beta \overline{q}_1} = 1$. 
Indeed, here $\overline{q}_1 = l$ as in (\ref{zyx1}), since $g_{\beta \overline{q}_1}$ corresponds to the connected component of $\Gamma_1$ that contains the vertex clasped from $\Gamma_2$. So we are imposing the same condition $g_{\beta l} = 1$ both in (\ref{zyx1}) and \eqref{yx2}.

Finally, we can compute $(Z \circ_i (Y \circ_{j-i+1} X))^\Gamma$. Suppose that $\overline{\Gamma}_1$ is clasped into a vertex contained in the $q_2$-th connected component of $\Gamma_0$. Then
\begin{align}\label{zyx2}
    & (Z \circ_i (Y \circ_{j-i+1} X))^\Gamma (v_1 \otimes \dots \otimes v_{m+n+l-2}) \notag \\
    &= (-1)^{p_{i}^Y+p_j^X} \sum_{\alpha,\beta,\gamma} \rho^\Gamma_{i}\bigl((\underbrace{1 \otimes \dots \otimes 1}_{q_2-1} \otimes \rho^{\overline{\Gamma}_1}_{j-i+1}(g_{\beta 1(1)} \otimes \dots \otimes g_{\beta\, \overline{q}_1-1(1)} \notag \\
    & \;\; \otimes f_{\alpha 1(1)} \otimes \dots \otimes f_{\alpha s(1)} \otimes g_{\beta\, \overline{q}_1+1(1)} \otimes \dots \otimes g_{\beta t(1)}) \otimes 1 \otimes \dots \otimes 1) (h_{\gamma1} \otimes \cdots \notag \\
    & \;\;\otimes h_{\gamma\, q_2-1} \otimes \Delta^{(s+t-2)}(h_{\gamma q_2}) \otimes h_{\gamma\, q_2+1} \otimes \dots \otimes h_{\gamma r})\bigr) \otimes_H z_\gamma \bigl((g_{\beta 1(-2)} \otimes \cdots \notag \\
    & \;\; \otimes g_{\beta\, \overline{q}_1-1(-2)} \otimes f_{\alpha 1(-2)} \otimes \dots \otimes f_{\alpha s(-2)} \otimes g_{\beta\, \overline{q}_1+1(-2)} \otimes \dots \otimes g_{\beta t(-2)}) \notag \\
    & \;\;\cdot(v_1 \otimes \dots \otimes v_{i-1} \otimes y_\beta\bigl((f_{\alpha 1(-3)} \otimes\dots \otimes f_{\alpha s(-3)}) \cdot (v_i \otimes \dots \otimes v_{j-1} \notag \\
    & \;\;\otimes x_\alpha(v_j \otimes \dots \otimes v_{j+m-1}) \otimes v_{j+m} \otimes \dots  \otimes v_{i+m+n-2})\bigr) \notag \\
    & \;\; \otimes v_{i+m+n-1} \otimes \dots \otimes v_{m+n+l-2})\bigr). 
\end{align}
Note that here $q_2 = q_1$ and we are imposing the same condition as in (\ref{ziyaa}): $h_{\gamma q_2} = 1$. Thus, $\rho^{\overline{\Gamma}_1}_{j-i+1}(g_{\beta 1(1)} \otimes \dots \otimes f_{\alpha 1(1)} \otimes \dots \otimes f_{\alpha s(1)} \otimes \dots \otimes g_{\beta t(1)})$ will multiply with $\Delta^{(s+t-2)}(h_{\gamma q_2}) = 1 \otimes \dots \otimes 1$. In $z_\gamma$, we use the identity $f_{\alpha k(1)(1)} \otimes f_{\alpha k(1)(-2)} \otimes f_{\alpha k(-2)} = f_{\alpha k(1)} \otimes f_{\alpha k(-2)} \otimes f_{\alpha k(-3)}$. Hence, $f_{\alpha k (-2)}$'s will only act on the vectors outside $y_\beta$, and $f_{\alpha k (-3)}$'s will only act on the vectors inside $y_\beta$ but out of $x_\alpha$. As in (\ref{zyx1}), $f_{\alpha k (-2)}$'s and $g_{\beta p(-2)}$'s will not act on the same vector outside $y_\beta$; otherwise we get a cycle in $\Gamma_0$.

To see that (\ref{zyx1}) and (\ref{zyx2}) are equal, note that for the coefficient part, each tensor factor can only be $f_{\alpha k(1)}$, $g_{\beta p (1)}$ or $h_{\gamma q}$. No multiplication will appear since we impose the conditions $g_{\beta l} = h_{\gamma q_1} = 1$. Since both (\ref{zyx1}) and (\ref{zyx2}) identify the connected components of $\Gamma$ in the end, the coefficient parts are equal. For the vector part, as $\Delta(f_{\alpha k(-2)}) = f_{\alpha k(-2)} \otimes f_{\alpha k(-3)} = f_{\alpha k(-3)} \otimes f_{\alpha k(-2)}$ (since $H$ is cocommutative), the actions of $f_{\alpha k}$'s are the same; hence the vector parts are the same. This completes the proof of the lemma.
\end{proof}

\begin{lemma}\label{operadeq3}
For any $X \in \mathcal{P}^{cl}_H(m)$, $Y \in \mathcal{P}^{cl}_H(n)$, $\sigma \in S_n$, $\tau \in S_m$, and $1\le i\le n$,
we have $Y^\sigma \circ_i X^\tau = (Y \circ_{\sigma(i)} X)^{\sigma \circ_i \tau}$, i.e., \eqref{equieq} holds where $\sigma \circ_i \tau$ is given by \eqref{sigcircitau}.
\end{lemma}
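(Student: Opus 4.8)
The plan is to evaluate both sides of the asserted identity on an arbitrary graph $\Gamma \in G(m+n-1)$ and vector $v = v_1 \otimes \dots \otimes v_{m+n-1}$, expand each side using the definition \eqref{snaction} of the $S_n$-action and the definition \eqref{circleatk} of the $\circ_k$-product, and then match the resulting expressions factor by factor. Write $\Gamma_0 := \Delta_0^{1 \dots 1 m 1 \dots 1}(\Gamma) \in G(n)$ and $\Gamma_i := \Delta_i^{1 \dots 1 m 1 \dots 1}(\Gamma) \in G(m)$, where $m$ sits at position $i$. On the left-hand side, $(Y^\sigma \circ_i X^\tau)^\Gamma(v)$ requires the values $(X^\tau)^{\Gamma_i}$ and $(Y^\sigma)^{\Gamma_0}$; substituting \eqref{snaction} rewrites these through $X^{\tau\Gamma_i}$ and $Y^{\sigma\Gamma_0}$, together with the component permutations $\widetilde{\tau},\widetilde{\sigma}$ and the vector permutations $\tau,\sigma$. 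On the right-hand side, \eqref{snaction} gives $((Y \circ_{\sigma(i)} X)^{\sigma \circ_i \tau})^\Gamma(v) = (\widetilde{\sigma \circ_i \tau} \otimes_H 1)\bigl((Y \circ_{\sigma(i)} X)^{(\sigma \circ_i \tau)\Gamma}((\sigma \circ_i \tau) v)\bigr)$, which requires evaluating the $\circ_{\sigma(i)}$-product of $Y$ and $X$ on the permuted graph $(\sigma \circ_i \tau)\Gamma$.

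The crucial structural input is Proposition \ref{graphcoequivariance}. Applying it with $\tau_i = \tau$, $\tau_j = 1$ $(j \neq i)$ and $m_i = m$, $m_j = 1$, one has $\sigma(\tau_1,\dots,\tau_n) = \sigma \circ_i \tau$ by \eqref{sigcircitau}, and the cocomposition pattern on its left-hand side places $m$ at position $\sigma(i)$ — exactly the pattern used to compute $Y \circ_{\sigma(i)} X$. The proposition then yields $\Delta_0^{1 \dots m \dots 1}\bigl((\sigma \circ_i \tau)\Gamma\bigr) = \sigma\Gamma_0$ and $\Delta_{\sigma(i)}^{1 \dots m \dots 1}\bigl((\sigma \circ_i \tau)\Gamma\bigr) = \tau_{\sigma^{-1}(\sigma(i))}\Gamma_i = \tau\Gamma_i$, while the remaining singleton blocks stay trivial. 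Hence on the right-hand side $X$ is again evaluated on $\tau\Gamma_i$ and $Y$ on $\sigma\Gamma_0$, so the graphs on which $X$ and $Y$ are tested agree with those of the left-hand side.

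With the graphs identified, the verification splits into four checks. First, the vector arguments: I would show that permuting $v$ by $\sigma \circ_i \tau$ and then reorganizing inside the $\circ_{\sigma(i)}$-product produces the same grouping as the separate inner permutation $\tau$ on the $X$-block and outer permutation $\sigma$ on the $Y$-slots, which is precisely the content of \eqref{permutation comp} for $\sigma \circ_i \tau$. Second, the $H$-coefficient tensor factors match because the external-connectedness data and the iterated coproducts $\Delta^{(s-1)}$ are unaffected by relabeling vertices. Third, the component permutations: one must check that $\widetilde{\sigma \circ_i \tau}$ followed by $\rho_{\sigma(i)}^{(\sigma \circ_i \tau)\Gamma}$ equals $\rho_i^\Gamma$ composed with $\widetilde{\sigma}$ and $\widetilde{\tau}$, directly from the definitions \eqref{tildesigma} and \eqref{rhokga}. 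Fourth, the Koszul--Quillen signs: the right-hand factors $(-1)^{p_{\sigma(i)}^{X}}$ (from \eqref{signcircle}, computed on $(\sigma \circ_i \tau)v$) and $\epsilon_v(\sigma \circ_i \tau)$ must reconcile with the left-hand factors $(-1)^{p_i^{X}}$, $\epsilon_v(\tau)$, and the $\sigma$-sign arising on the $n$ slots.

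I expect the main obstacle to lie in the third and fourth checks: simultaneously tracking how three permutations $\widetilde{\sigma},\widetilde{\tau},\rho$ relabel the connected components and how the Koszul--Quillen signs accumulate, then verifying that both collapse to the single data appearing on the other side. A convenient way to organize this is to first treat the two special cases $\tau = \mathrm{id}$ (only $\sigma$ active) and $\sigma = \mathrm{id}$ (only $\tau$ active), where exactly one family of permutations and one sign source is in play, and then recombine them through Lemma \ref{lemact} together with the $\circ$-product identity $(\sigma \circ_i \mathrm{id})(\mathrm{id} \circ_i \tau) = \sigma \circ_i \tau$ for permutations, which follows from \eqref{permutation comp}.
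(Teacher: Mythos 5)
Your proposal follows essentially the same route as the paper's proof: both sides are expanded on an arbitrary graph via \eqref{snaction} and \eqref{circleatk}, the coequivariance of the graph cocomposition (Proposition \ref{graphcoequivariance}) is invoked to show that $X$ and $Y$ are evaluated on the same graphs on either side, and the problem is reduced to matching the vector parts, the $H$-coefficients, the Koszul--Quillen signs, and the compatibility of the component permutations --- your third check is precisely the paper's identity \eqref{permutecommute}, which the paper proves by tracking how each side identifies connected components of $\Gamma$. Your closing suggestion to factor the argument through the special cases $\tau=\mathrm{id}$ and $\sigma=\mathrm{id}$ and recombine via Lemma \ref{lemact} and $(\sigma\circ_i\mathrm{id})(\mathrm{id}\circ_i\tau)=\sigma\circ_i\tau$ is a sound organizational refinement (the group identity does follow from \eqref{permutation comp}), but it is not how the paper proceeds and the underlying verifications in each special case are the same ones the paper carries out for general $\sigma,\tau$ at once.
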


\begin{proof}
Consider a graph $\Gamma \in G(m+n-1)$, and let 
\begin{align*}
 \Gamma_0 = \sigma(\Delta_0^{1 \dots 1m1 \dots 1}(\Gamma)), \quad \Gamma_i = \tau(\Delta_i^{1 \dots 1m1 \dots 1}(\Gamma)),
\end{align*}
where $m$ appears at the $i$-th position. As before, we write
\begin{align}
X^{\Gamma_i}(v) &= \sum_{\alpha} (f_{\alpha 1} \otimes \dots \otimes f_{\alpha s}) \otimes_H x_\alpha (v),
& v &\in V^{\otimes m}, \label{xgammai}\\
Y^{\Gamma_0}(w) &= \sum_{\beta} (g_{\beta 1} \otimes \dots \otimes g_{\beta t}) \otimes_H y_\beta(w),
& w &\in V^{\otimes n}. \label{ygamma0}
\end{align}
First, using (\ref{snaction})--\eqref{tildesigma}, (\ref{xgammai}) and (\ref{ygamma0}), we find:
\begin{align}
    & (X^\tau)^{\Delta_i^{1 \dots 1m1 \dots 1}(\Gamma)}(v_i \otimes \dots \otimes v_{i+m-1}) \notag \\
    &= (\widetilde{\tau} \otimes_H 1)X^{\Gamma_i} \bigl(\tau(v_i \otimes \dots \otimes v_{i+m-1})\bigr) \notag \\
    &= \epsilon(\tau) \sum_{\alpha} \widetilde{\tau}(f_{\alpha 1} \otimes \dots \otimes f_{\alpha s}) \otimes_H x_\alpha (v_{\tau^{-1}(i)} \otimes \dots \otimes v_{\tau^{-1}(i+m-1)})\label{xtau} \notag \\
    &= \epsilon(\tau) \sum_\alpha (f_{\alpha\,\widetilde{\tau}^{-1}(1)} \otimes \dots \otimes f_{\alpha\,\widetilde{\tau}^{-1}(s)}) \otimes_H x_\alpha (v_{\tau^{-1}(i)} \otimes \dots \otimes v_{\tau^{-1}(i+m-1)}),
\end{align}
and 
\begin{align}
    & (Y^\sigma)^{\Delta_0^{1 \dots 1m1 \dots 1}(\Gamma)}(v_1 \otimes \dots \otimes v_{i-1} \otimes x_\alpha(v_i \otimes \dots \otimes v_{i+m-1}) \notag \\
    & \quad \otimes v_{i+m} \otimes \dots \otimes v_{m+n-1}) \notag \\
    &= (\widetilde{\sigma} \otimes_H 1) Y^{\Gamma_0} \bigl(\sigma(v_1 \otimes \dots \otimes v_{i-1} \otimes x_\alpha(v_i \otimes \dots \otimes v_{i+m-1}) \notag \\
    & \quad \otimes v_{i+m} \otimes \dots \otimes v_{m+n-1})\bigr) \notag \\
    &= \epsilon(\sigma) \sum_{\beta} (g_{\beta\,\widetilde{\sigma}^{-1}(1)} \otimes \dots \otimes g_{\beta\,\widetilde{\sigma}^{-1}(t)}) \otimes_H y_\beta(v_{\sigma^{-1}(1)} \otimes \dots \otimes v_{\sigma^{-1}(\sigma(i)-1)} \notag \\
    & \quad \otimes x_\alpha(v_i \otimes \dots \otimes v_{i+m-1}) \otimes v_{\sigma^{-1}(\sigma(i)+1)} \otimes \dots \otimes v_{\sigma^{-1}(m+n-1)}). \label{ysigma}
\end{align}
Recall that $\epsilon(\sigma)$ and $\epsilon(\tau)$ are given by (\ref{sign3}); the subscripts are suppressed since the tensor products these permutations act on are clear.
Then we evaluate $Y^\sigma \circ_i X^\tau$ on $\Gamma$ as follows:
\begin{align}\label{circlekpermuted}
    & (Y^\sigma \circ_i X^\tau)^{\Gamma}(v_1 \otimes \dots \otimes v_{m+n-1}) \notag \\
    &= (-1)^{\overline{p}_{i}^X} \epsilon(\sigma) \epsilon(\tau) \sum_{\alpha,\beta} \rho^\Gamma_{i}\bigl(( 
    \underbrace{1 \otimes \dots \otimes 1}_{\widetilde{\sigma}(q) -1} \otimes \widetilde{\tau}(f_{\alpha1(1)} \otimes \dots \otimes f_{\alpha s(1)}) \otimes 1 \notag \\ 
    & \;\;\otimes \dots\otimes 1)\,(g_{\beta\,\widetilde{\sigma}^{-1}(1)} \otimes \dots \otimes g_{\beta\,\widetilde{\sigma}^{-1}(\widetilde{\sigma}(q) -1)} 
    \otimes \Delta^{(s-1)} (g_{\beta q})\otimes \dots \otimes g_{\beta\,\widetilde{\sigma}^{-1}(t)})\bigr)  \notag \\
    & \;\;\otimes_H y_\beta\bigl((f_{\alpha 1(-2)} \otimes \dots\otimes f_{\alpha s(-2)}) \cdot (v_{\sigma^{-1}(1)} \otimes \dots \otimes v_{\sigma^{-1}(\sigma(i)-1)}  \notag \\
    &\;\;\otimes x_\alpha(v_{\tau^{-1}(i)} \otimes \dots \otimes v_{\tau^{-1}(i+m-1)}) 
    \otimes \dots \otimes v_{\sigma^{-1}(m+n-1)})\bigr),
\end{align}
  where $g_{\beta q}$ in $\widetilde{\sigma}(g_{\beta 1} \otimes \dots \otimes g_{\beta t})$ corresponds to the connected component of $\Gamma_0$ that contains the vertex clasped from $\Gamma_i$, and
\begin{align*}
    \overline{p}_{i}^X = p(X) (p(v_{\sigma^{-1}(1)}) + \dots + p(v_{\sigma^{-1}(\sigma(i)-1)})).
\end{align*}


For the right-hand side $((Y \circ_{\sigma(i)} X)^{\sigma \circ_i \tau})^\Gamma$,
by Proposition \ref{graphcoequivariance}, we will evaluate $X$ and $Y$ on the graphs 
\begin{align*}
\Delta_{\sigma(i)}^{1 \dots 1m1 \dots 1}((\sigma \circ_i \tau)(\Gamma)) = \Gamma_i \quad\text{and}\quad
\Delta_0^{1 \dots 1m1 \dots 1}((\sigma \circ_i \tau)(\Gamma)) = \Gamma_0,
\end{align*}
respectively, where $m$ appears at the $\sigma(i)$-th position. Meanwhile, recall that \cite[(2.17)]{BDHK19}:
\begin{align}
    (\sigma &\circ_i \tau)(v_1 \otimes \dots \otimes v_{m+n-1}) \notag \\
    &=  \sigma (v_1 \otimes \dots \otimes v_{i-1} \otimes \tau(v_i \otimes \dots \otimes v_{i+m-1}) \otimes v_{i+m} \otimes \dots \otimes v_{m+n-1}) \notag \\
    &=  \epsilon(\sigma) \epsilon(\tau) \, v_{\sigma^{-1}(1)} \otimes \dots \otimes v_{\sigma^{-1}(\sigma(i)-1)} \otimes v_{\tau^{-1}(i)} \otimes \dots \otimes v_{\tau^{-1}(i+m-1)} \notag \\
    & \quad \otimes v_{\sigma^{-1}(\sigma(i)+1)} \otimes \dots \otimes v_{\sigma^{-1}(m+n-1)}.
\end{align}
Hence, we compute the right-hand side:
\begin{align}\label{circlekpermuted2}
    & ((Y \circ_{\sigma(i)} X)^{\sigma \circ_i \tau})^{\Gamma}(v_1 \otimes \dots \otimes v_{m+n-1}) \notag \\
    &= (\widetilde{(\sigma \circ_i \tau)} \otimes_H 1)(Y \circ_{\sigma(i)} X)^{(\sigma \circ_i \tau)\Gamma}((\sigma \circ_i \tau)(v_1 \otimes \dots \otimes v_{m+n-1})) \notag \\
    &= \epsilon(\sigma) \epsilon(\tau) (\widetilde{(\sigma \circ_i \tau)} \otimes_H 1) (Y \circ_{\sigma(i)} X)^{(\sigma \circ_i \tau)\Gamma}(v_{\sigma^{-1}(1)} \otimes \dots \otimes v_{\sigma^{-1}(\sigma(i)-1)} \notag \\
    & \quad\otimes v_{\tau^{-1}(i)} \otimes \dots \otimes v_{\tau^{-1}(i+m-1)} \otimes v_{\sigma^{-1}(\sigma(i)+1)} \otimes\dots \otimes v_{\sigma^{-1}(m+n-1)}) \notag \\
    &= (-1)^{\overline{p}_{i}^X} \epsilon(\sigma) \epsilon(\tau) \sum_{\alpha,\beta}\widetilde{(\sigma \circ_i \tau)} \bigl(\rho^{(\sigma \circ_i \tau)\Gamma}_{\sigma(i)}
    (g_{\beta 1} \otimes \dots \otimes g_{\beta\,q-1} \otimes f_{\alpha1(1)} g_{\beta q(1)} \notag \\
    & \quad\otimes \dots \otimes f_{\alpha s(1)} g_{\beta q(s)} \otimes g_{\beta\,q+1} \otimes \dots \otimes g_{\beta t})\bigr)
    \otimes_H y_\beta\bigl((f_{\alpha 1(-2)} \otimes\dots \otimes f_{\alpha s(-2)}) \notag \\
    & \quad\cdot (v_{\sigma^{-1}(1)} \otimes \dots \otimes v_{\sigma^{-1}(\sigma(i)-1)} \otimes x_\alpha(v_{\tau^{-1}(i)} \otimes \dots \otimes v_{\tau^{-1}(i+m-1)}) \notag \\
    & \quad\otimes v_{\sigma^{-1}(\sigma(i)+1)} \otimes \dots \otimes v_{\sigma^{-1}(m+n-1)})\bigr).
\end{align}

We see immediately that the vector parts are the same in the right-hand sides of \eqref{circlekpermuted} and \eqref{circlekpermuted2}.
For the coefficient parts, without loss of generality, we assume that $g_{\beta q} = 1$. Then it suffices to show that
\begin{align}\label{permutecommute}
    \rho^\Gamma_{i} (\widetilde{\sigma} \circ_{\widetilde{\sigma}(q)} \widetilde{\tau}) = \widetilde{(\sigma \circ_i \tau)}  \rho^{(\sigma \circ_i \tau)\Gamma}_{\sigma(i)} \in S_{s+t-1}
\end{align} 
when acting on any vector $h\in H^{\otimes (s+t-1)}$; in particular, for
\begin{align*}
h= g_{\beta 1} \otimes \dots \otimes g_{\beta\, q-1} \otimes f_{\alpha 1(1)} \otimes \dots \otimes f_{\alpha s(1)} \otimes g_{\beta\, q+1} \otimes \dots \otimes g_{\beta t}.
\end{align*}
In order to prove this, recall from \eqref{tildesigma} that $\widetilde{\sigma}$ identifies the $k$-th connected component of $\sigma\Gamma$ with the $\widetilde{\sigma}(k)$-th connected component of $\Gamma$. Consider the right-hand side of (\ref{permutecommute}), and pick a tensor factor in $h$ that represents a connected component of $\Gamma_i$ or $\Gamma_0$. It is first identified by $\rho_{\sigma(i)}^{(\sigma \circ_i \tau)\Gamma}$ with a connected component of $(\sigma \circ_i \tau)\Gamma$; then identified by $\widetilde{(\sigma \circ_i \tau)}$ with a connected component of $\Gamma$. Now consider the left-hand side of (\ref{permutecommute}), and note that $(\widetilde{\sigma} \circ_{\widetilde{\sigma}(q)} \widetilde{\tau})$ identifies this connected component in $h$ with a connected component in either $\Delta_0^{1 \dots 1m1 \dots 1}(\Gamma)$ or $\Delta_i^{1 \dots 1m1 \dots 1}(\Gamma)$, where $m$ appears at the $i$-th position. If it is a connected component in $\Gamma_0$, it will be identified as a connected component in $\Delta_0^{1 \dots 1m1 \dots 1}(\Gamma)$; while if it is a connected component in $\Gamma_i$, it will be identified as a connected component in $\Delta_i^{1 \dots 1m1 \dots 1}(\Gamma)$. Then $\rho^\Gamma_{i}$ identifies this connected component with a connected component of $\Gamma$. Hence, both sides identify connected components in $h$ with those of $\Gamma$, and they must be equal.
This proves the lemma.
\end{proof}

Combining the results of Lemmas \ref{operadeq1}, \ref{operadeq2} and \ref{operadeq3} concludes the proof of Theorem \ref{firstmainthm}.

\section{Poisson $H$-Pseudoalgebras}\label{sec4}

In this section, as an application of the construction of the generalized classical operad $\mathcal{P}^{cl}_H$,
we introduce the notion of a Poisson $H$-pseudoalgebra, define the variational and classical cohomology of Poisson $H$-pseudoalgebras, and provide examples.
As before, $H$ will be a cocommutative Hopf algebra.

\subsection{Lie pseudoalgebras}
Since every Poisson vertex algebra is in particular a Lie conformal algebra, let us start by recalling the notion of a Lie pseudoalgebra,
which generalizes Lie conformal algebras.

\begin{definition}[\cite{BDK01}]\label{defofLie pseudoalgebra}
A \emph{Lie $H$-pseudoalgebra} is a pair $(L, \beta)$, where $L$ is a superspace with parity $p$, which is a left $H$-module (with $H$ purely even), and $\beta$ is an even element of $\mathrm{Hom}_{H \otimes H}(L \otimes L, (H \otimes H) \otimes_H L)$ satisfying the axioms below. Writing the \emph{pseudobracket} $\beta$ as $\beta(a \otimes b) = [a*b]$,
the axioms are ($a,b,c \in L$, $f,g \in H$, $\sigma = (12) \in S_2$):
\begin{itemize}
    \item \textbf{$H$-bilinearity:} \begin{align}[fa*gb] = ((f \otimes g) \otimes _H 1)[a*b]; \label{hbil} \end{align} 
    \item \textbf{Skewsymmetry:} \begin{align}[b*a] = -(-1)^{p(a)p(b)}(\sigma \otimes _H \text{id})[a*b]; \label{skewsym} \end{align}
    \item \textbf{Jacobi identity:} \begin{align}\label{jacid}
    [a*[b*c]] - (-1)^{p(a)p(b)}((\sigma \otimes \text{id}) \otimes_H \text{id})[b*[a*c]] = [[a*b]*c].\end{align}
\end{itemize}
\end{definition}

\begin{remark}\label{bracketcomp}
In the Jacobi identity \eqref{jacid}, the composition $[[a*b]*c]$ is defined as follows \cite{BDK01}. 
Let us write
\begin{align}\label{bracketcomp3}
[a*b] &= \sum_i (f_i \otimes g_i) \otimes_H e_i, \\
[e_i*c] &= \sum_j (f_{ij} \otimes g_{ij}) \otimes_H e_{ij}, \label{bracketcomp4}
\end{align}
for some $f_i,g_i,f_{ij},g_{ij} \in H$ and $e_i,e_{ij} \in L$. Then
\begin{align}\label{bracketcomp2}
        [[a*b]*c] = \sum_{i,j} (f_if_{ij(1)} \otimes g_if_{ij(2)} \otimes g_{ij}) \otimes_H e_{ij}.
\end{align}
We point out that, while expressions \eqref{bracketcomp3}, \eqref{bracketcomp4} are not unique (as they involve $\otimes_H$),
the right-hand side of \eqref{bracketcomp2} depends only on $[a*b]$ and $c$.
Similarly, if 
\begin{align}\label{bracketcomp5}
[b*c] &= \sum_i (h_i \otimes l_i) \otimes_H d_i, \\
[a*d_i] &= \sum_j (h_{ij} \otimes l_{ij}) \otimes_H d_{ij}, \label{bracketcomp6}
\end{align}
then
\begin{align}\label{bracketcomp1}
    [a*[b*c]] = \sum_{i,j} (h_{ij} \otimes h_il_{ij(1)} \otimes l_il_{ij(2)}) \otimes_H d_{ij}.
\end{align}
\end{remark}

\begin{remark}
If $[a*b]$ is given by \eqref{bracketcomp3}, then the $H$-bilinearity \eqref{hbil} is equivalent to:
\begin{align}\label{hbil2}
[fa*gb] = \sum_i (ff_i \otimes gg_i) \otimes_H e_i, 
\end{align}
while the skewsymmetry \eqref{skewsym} is equivalent to:
\begin{align}\label{skewsym2}
[b*a] = -(-1)^{p(a)p(b)} \sum_i (g_i \otimes f_i) \otimes_H e_i.
\end{align}
Finally, note that $p(e_i)=p(a)+p(b)$ for all $i$, because the pseudobracket is even.
\end{remark}


As an example, we recall the Lie pseudoalgebra $W(\mathfrak{d})$, which is closely related to the Lie--Cartan algebra of vector fields
$W_N = \mathrm{Der}\,\mathbb{F}[[t_1, \dots, t_N]]$ (see \cite{BDK01} for more details).

\begin{example}[\textbf{Lie pseudoalgebra $W(\mathfrak{d})$}]\label{typew}
Let $H = U(\mathfrak{d})$ be the universal enveloping algebra of a finite-dimensional Lie algebra $\mathfrak{d}$. We define $W(\mathfrak{d})$ as the free left $H$-module $H \otimes \mathfrak{d}$ (where $H$ acts by multiplication on the first factor),
with the following pseudobracket:
\begin{align}\label{wbracket}
    [(f \otimes a)&*(g \otimes b)] = (f \otimes g) \otimes_H (1 \otimes [a,b]) \notag \\
    &+(fb \otimes g) \otimes_H (1 \otimes a)-(f \otimes ga) \otimes_H(1 \otimes b),
\end{align}
for $f,g\in H$, $a,b \in\mathfrak{d}$.
\end{example}

We refer to \cite{BDK01} for further examples of Lie pseudoalgebras.
An important special case is when
\begin{align}\label{hpolyn}
H = \mathbb{F}[\partial_1, \dots, \partial_N], \quad\text{with }\;
\Delta(\partial_i) = \partial_i \otimes 1 + 1 \otimes \partial_i,
\end{align}
is the universal enveloping algebra of an $N$-dimensional abelian Lie algebra.
In this case, the pseudobracket can be expressed equivalently as a \emph{$\vec{\lambda}$-bracket}
$L \otimes L \to L[\vec{\lambda}] := L[\lambda_1,\dots,\lambda_N]$, where
$\vec{\lambda} = (\lambda_1, \dots, \lambda_N)$. Explicitly \cite{BDK01}:
\begin{align}\label{twobracketid}
[a*b] =  \sum_i (f_i(\vec{\partial}) \otimes 1) \otimes_H e_i \;\;\Leftrightarrow\;\; [a_{\vec{\lambda}}b] = \sum_i f_i(-\vec{\lambda})e_i,
\end{align}
where $\vec{\partial}=(\partial_1, \dots, \partial_N)$ (recall that we can always arrange that all $g_i=1$ in \eqref{bracketcomp3}).
Then the axioms of a Lie pseudoalgebra coincide with the axioms \eqref{sesq-comp}--\eqref{jacN} of a \emph{Lie conformal algebra} in dimension $N$ \cite{BKV99}.

\subsection{Definition of a Poisson pseudoalgebra}

Motivated by the connection between the classical operad $\mathcal{P}^{cl}$ and the notion of a Poisson vertex algebra (see \cite[Sect.\ 10]{BDHK19}), 
our construction of the generalized classical operad $\mathcal{P}^{cl}_H$ leads to the following definition of a Poisson $H$-pseudoalgebra.

\begin{definition}
A \emph{Poisson $H$-pseudoalgebra} $V$ is a Lie $H$-pseudoalgebra with a pseudobracket $[a*b]$, equipped with a supercommutative associative product $ab \in V$
for $a,b\in V$, satisfying the following axioms:  
\begin{itemize}
\item \textbf{$H$-differential:}
\begin{align}\label{hdiff}
h(ab) = (h_{(1)} a)(h_{(2)} b), \qquad h\in H, \;\; a,b\in V;
\end{align}
\item \textbf{Leibniz rule:}
\begin{align}\label{leibniz}
    [a*bc] = [a*b]c + (-1)^{p(b)p(c)}
    [a*c]b,
\end{align}
\end{itemize}
where $[a*b]c$ is defined by
\begin{align}
    [a*b]c := \sum_i (f_i \otimes g_{i(1)}) \otimes_H 
    e_i(g_{i(-2)}c) \label{interaction}
\end{align}
for $[a*b]$ written in the form \eqref{bracketcomp3}.
\end{definition}

We remark that \eqref{hdiff} means that $V$ is an \emph{$H$-differential algebra}, i.e., the product $V\otimes V\to V$ is an $H$-module homomorphism.

\begin{lemma}\label{lintwd}
The right-hand side of \eqref{interaction} is well defined, i.e., it only depends on $[a*b]$ and $c$ but not on the expression \eqref{bracketcomp3}.
\end{lemma}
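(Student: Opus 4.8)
The plan is to reinterpret \eqref{interaction} as the value of a fixed bilinear map and then to verify that this map is compatible with the defining relations of the balanced tensor product $(H\otimes H)\otimes_H V$. Regard $H\otimes H$ as a right $H$-module via the coproduct, so that $(H\otimes H)\otimes_H V$ is the quotient in which
\[
(fh_{(1)}\otimes gh_{(2)})\otimes_H v = (f\otimes g)\otimes_H(hv), \qquad f,g,h\in H,\ v\in V.
\]
Define a bilinear map $\beta_c\colon (H\otimes H)\times V\to (H\otimes H)\otimes_H V$ by
\[
\beta_c(f\otimes g,\,e):=(f\otimes g_{(1)})\otimes_H e\,(g_{(-2)}c),
\]
with $g_{(1)}\otimes g_{(-2)}$ as in \eqref{eq3.3}. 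Applying $\beta_c$ termwise to \eqref{bracketcomp3} reproduces \eqref{interaction}, so by the universal property of $\otimes_H$ the lemma reduces to showing that $\beta_c$ is $H$-balanced, i.e.
\[
\beta_c\bigl(fh_{(1)}\otimes gh_{(2)},\,e\bigr)=\beta_c\bigl(f\otimes g,\,he\bigr)
\]
for all $f,g,h\in H$ and $e\in V$; this guarantees that $\beta_c$ descends to $(H\otimes H)\otimes_H V$, whence $[a*b]c$ depends only on the element $[a*b]$ and on $c$.

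The main computation is to expand the left-hand side. Since $\Delta$ is an algebra map and $S$ an algebra anti-homomorphism, expanding the coproduct of $gh_{(2)}$ and applying coassociativity \eqref{coas} yields
\[
\beta_c\bigl(fh_{(1)}\otimes gh_{(2)},\,e\bigr)=(fh_{(1)}\otimes g_{(1)}h_{(2)})\otimes_H e\bigl(S(h_{(3)})S(g_{(2)})c\bigr),
\]
where $h_{(1)}\otimes h_{(2)}\otimes h_{(3)}$ is the iterated coproduct. The coefficient factors as $(f\otimes g_{(1)})\Delta(k)$, where $\Delta(k)=h_{(1)}\otimes h_{(2)}$ for the element $k$ determined by $k\otimes h_{(3)}=\Delta(h)$, so the balancing relation pulls $k$ across the $\otimes_H$. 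I would then invoke the $H$-differential axiom \eqref{hdiff} to distribute the action of $k$ over the product $e\,(S(h_{(3)})S(g_{(2)})c)$, producing a factor $h_{(1)}e$ together with $h_{(2)}$ acting on $S(h_{(3)})S(g_{(2)})c$, i.e.\ the operator $h_{(2)}S(h_{(3)})$ acting on $S(g_{(2)})c$. Finally, the identity $h_{(1)}\otimes h_{(2)}S(h_{(3)})=h\otimes 1$ from \eqref{identity1} (together with \eqref{antipax}) collapses the summation over $h$ to $(he)(S(g_{(2)})c)$, which is precisely $\beta_c(f\otimes g,\,he)$.

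The routine but delicate part will be the coproduct bookkeeping: tracking which Sweedler leg of $h$ multiplies $f$, which one is transported across the $\otimes_H$, and which one is consumed by the antipode. The crux is to observe that, once \eqref{hdiff} has been applied, the factor $h_{(2)}S(h_{(3)})$ appears in exactly the shape required by \eqref{identity1}, so that the antipode and counit annihilate the spurious $H$-action introduced when moving $k$ across the tensor product, leaving the clean expression $\beta_c(f\otimes g,\,he)$. No sign subtleties intervene, since $e$ and $g_{(-2)}c$ are never transposed in this manipulation.
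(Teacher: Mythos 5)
Your proof is correct and follows essentially the same route as the paper's: both define the map on the free tensor product $H\otimes H\otimes V$, reduce to checking the $H$-balancing relation, and verify it by expanding the coproduct of $gh_{(2)}$ via coassociativity, pulling a coproduct leg of $h$ across $\otimes_H$, applying the $H$-differential axiom \eqref{hdiff}, and collapsing with the antipode identity \eqref{identity1}. Your explicit $S(h_{(3)})S(g_{(2)})$ is just the paper's $h_{(-3)}g_{(-2)}$ in different notation, so there is nothing substantive to change.
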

\begin{proof}
We need to check that, for any fixed $c\in V$, we have a well-defined linear map
\begin{align}\label{welldefined}
\Phi\colon (H \otimes H) \otimes_H V &\to (H \otimes H) \otimes_H V, \notag \\
(f \otimes g) \otimes_H e &\mapsto (f \otimes g_{(1)}) \otimes_H e(g_{(-2)}c).
\end{align}
Since the right-hand side of \eqref{welldefined} depends linearly on $f$, $g$ and $e$, it defines a linear map
\begin{align*}
\overline\Phi\colon H \otimes H \otimes V \to (H \otimes H) \otimes_H V.
\end{align*}
For $\overline\Phi$ to induce a map $\Phi$ on the quotient $(H \otimes H) \otimes_H V$ of $H \otimes H \otimes V$,
it is necessary and sufficient that 
\begin{align*}
\overline\Phi(f \otimes g \otimes he) = \overline\Phi(f h_{(1)} \otimes g h_{(2)} \otimes e) \qquad\text{for all }\; h\in H.
\end{align*}
To verify this, we compute using \eqref{coas}, \eqref{identity1} and \eqref{hdiff}:
\begin{align*}
\overline\Phi(f h_{(1)} &\otimes g h_{(2)} \otimes e) = (f h_{(1)} \otimes (g h_{(2)})_{(1)}) \otimes_H e( (g h_{(2)})_{(-2)}c) \\
&= (f h_{(1)} \otimes g_{(1)} h_{(2)}) \otimes_H e(h_{(-3)} g_{(-2)}c) \\
&= (f \otimes g_{(1)}) \otimes_H h_{(1)} \bigl( e(h_{(-2)} g_{(-2)}c) \bigr) \\
&= (f \otimes g_{(1)}) \otimes_H (h_{(1)} e) (h_{(2)} h_{(-3)} g_{(-2)}c) \\
&= (f \otimes g_{(1)}) \otimes_H (he)(g_{(-2)}c)
= \overline\Phi(f \otimes g \otimes he).
\end{align*}
This completes the proof.
\end{proof}

Similarly to \eqref{interaction}, we define the product
\begin{align}\label{right1}
    a[b*c] := \sum_i (h_{i(1)} \otimes l_i) \otimes_H (h_{i(-2)}a) d_i,
\end{align} 
for $[b*c]$ written in the form \eqref{bracketcomp5}. As in Lemma \ref{lintwd}, one can show that \eqref{right1} is well defined.
Then from the (left) Leibniz rule \eqref{leibniz} and the skewsymmetry \eqref{skewsym} of the pseudobracket, one can derive the following:

\begin{lemma}[\textbf{Right Leibniz rule}]
In any Poisson pseudoalgebra $V$, we have the right Leibniz rule
\begin{align}\label{leibnizr}
    [ab*c] = a[b*c]+(-1)^{p(a)p(b)}b[a*c].
\end{align}
\end{lemma}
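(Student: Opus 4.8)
The plan is to deduce the right Leibniz rule \eqref{leibnizr} from the \emph{left} Leibniz rule \eqref{leibniz} by conjugating with the skewsymmetry \eqref{skewsym}. First I would apply skewsymmetry to rewrite the left-hand side as
$$[ab*c] = -(-1)^{(p(a)+p(b))p(c)}(\sigma \otimes_H \text{id})[c*ab],$$
and then expand the inner pseudobracket using the left Leibniz rule \eqref{leibniz} (with first argument $c$):
$$[c*ab] = [c*a]b + (-1)^{p(a)p(b)}[c*b]a.$$
This reduces the whole statement to understanding how the swap $(\sigma \otimes_H \text{id})$ interacts with the right-multiplication product \eqref{interaction}.

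The key step is the identity, valid for all $a,b,c \in V$,
$$(\sigma \otimes_H \text{id})\bigl([c*a]b\bigr) = -(-1)^{(p(a)+p(c))p(b)+p(a)p(c)}\, b[a*c],$$
together with its $a\leftrightarrow b$ counterpart. To establish it I would write $[c*a] = \sum_i (p_i \otimes q_i) \otimes_H r_i$, compute $[c*a]b$ by \eqref{interaction}, apply $(\sigma\otimes_H\text{id})$ to swap the two $H$-factors, and then recognize the outcome as the left product $b[a*c]$ via the skewsymmetry form \eqref{skewsym2} and the definition \eqref{right1}. The two expressions differ only by reordering the monomial $r_i(q_{i(-2)}b)$ into $(q_{i(-2)}b)r_i$; this is exactly where supercommutativity of the product enters, producing the sign $(-1)^{(p(a)+p(c))p(b)}$ (using $p(r_i)=p(a)+p(c)$ and that $H$ is purely even, so $p(q_{i(-2)}b)=p(b)$), while the factor $-(-1)^{p(a)p(c)}$ comes from skewsymmetry \eqref{skewsym2} relating $[a*c]$ to $(\sigma\otimes_H\text{id})[c*a]$.

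Substituting this identity (and its counterpart) back into the expansion, the argument collapses to checking that the accumulated Koszul signs simplify modulo $2$. For the $b[a*c]$ term the total exponent is $(p(a)+p(b))p(c)+(p(a)+p(c))p(b)+p(a)p(c)\equiv p(a)p(b)$, and for the $a[b*c]$ term it is $(p(a)+p(b))p(c)+p(a)p(b)+(p(b)+p(c))p(a)+p(b)p(c)\equiv 0$, yielding precisely $[ab*c]=a[b*c]+(-1)^{p(a)p(b)}b[a*c]$. I expect the main obstacle to be this sign bookkeeping combined with the well-definedness subtleties already addressed for \eqref{interaction} in Lemma \ref{lintwd} and for \eqref{right1}: one must ensure that applying $(\sigma\otimes_H\text{id})$ and reordering the supercommutative product are carried out on representatives in a manner compatible with passage to the quotient $(H\otimes H)\otimes_H V$, relying on the $H$-differential property \eqref{hdiff} and the antipode identities \eqref{identity1} to guarantee that the two products \eqref{interaction} and \eqref{right1} agree after the swap.
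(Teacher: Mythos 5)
Your proposal is correct and follows essentially the same route as the paper's proof: your key swap identity is precisely the paper's equation \eqref{mediumstep} (up to relabeling of $a,b,c$), derived in the same way from \eqref{skewsym2}, \eqref{interaction}, \eqref{right1}, supercommutativity of the product, and the evenness of the pseudobracket. The remaining steps---applying skewsymmetry to $[ab*c]$, expanding $[c*ab]$ by the left Leibniz rule \eqref{leibniz}, and collecting the Koszul signs---coincide with the paper's argument, and your sign bookkeeping checks out.
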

\begin{proof}
Suppose that $[a*b]$ is written again in the form \eqref{bracketcomp3}; then due to skewsymmetry $[b*a]$ is given by \eqref{skewsym2}.
Thus, by \eqref{interaction},
\begin{align*}
    [b*a]c = -(-1)^{p(a)p(b)} \sum_i (g_i \otimes f_{i(1)}) \otimes_H e_i(f_{i(-2)}c).
\end{align*}
Note that $p(e_i)=p(a)+p(b)$ for all $i$, because the pseudobracket is even. Since $V$ is supercommutative,
we have
\begin{align*}
e_i(f_{i(-2)}c) = (-1)^{(p(a)+p(b))p(c)} (f_{i(-2)}c) e_i.
\end{align*}
Using this, we can relate \eqref{interaction} and \eqref{right1}:
\begin{align}\label{mediumstep}
    -(-1)^{p(a)p(b)}(\sigma &\otimes_H 1)([b*a]c) = \sum_i (f_{i(1)} \otimes g_i) \otimes_H  e_i (f_{i(-2)}c)\notag \\
    &= (-1)^{(p(a)+p(b))p(c)}c[a*b].
\end{align}
Now we derive the right Leibniz rule from the left Leibniz rule \eqref{leibniz} and the skewsymmetry \eqref{skewsym}:
\begin{align*}
    [ab*c] &= -(-1)^{(p(a)+p(b))p(c)} (\sigma \otimes_H 1) [c*ab] \notag \\
    &= -(-1)^{(p(a)+p(b))p(c)} (\sigma \otimes_H 1) ([c*a]b) \notag \\
    &\quad\,- (-1)^{(p(a)+p(b))p(c)+p(a)p(b)} (\sigma \otimes_H 1) ([c*b]a) \notag \\
    &= (-1)^{p(a)p(b)}b[a*c] + a[b*c],
\end{align*}
where we use (\ref{mediumstep}) in the last equation. 
\end{proof}

We will provide examples of Poisson pseudoalgebras in Sect.\ \ref{Poisson pseudoalgebra4examples} below.
In the special case when $H$ is the algebra of polynomials in $N$ variables as in \eqref{hpolyn},
the pseudobracket is equivalent to the $\vec{\lambda}$-bracket \eqref{twobracketid}.
Then the Leibniz rule \eqref{leibniz} can be written as:
\begin{align}\label{Poisson pseudoalgebraleibnizrule}
        [a_{\vec\lambda}bc] = [a_{\vec\lambda}b]c+ (-1)^{p(b)p(c)}[a_{\vec\lambda}c]b.
\end{align}
Thus, for $N=1$, $H=\mathbb{F}[\partial]$, our notion of a Poisson pseudoalgebra coincides
with that of a \emph{Poisson vertex algebra} (see \cite[Sect.\ 16]{BDK01} and \cite{BDK09}). 

Note that for Poisson vertex algebras, the right Leibniz rule looks more complicated than the left one
(cf.\ \cite[(1.26)]{BDK09}), while in our approach the left and right versions are symmetric.
We will derive a formula for the pseudobracket of two products, generalizing
the corresponding formula for Poisson vertex algebras \cite[(1.34)]{BDK09}.
In order to state the result, we first need to check that the two products defined by 
\eqref{interaction} and \eqref{right1} satisfy associativity.

\begin{lemma}
Define products
\begin{align*}
V \otimes \bigl( (H \otimes H) \otimes_H V \bigr) &\to (H \otimes H) \otimes_H V, \\
\bigl( (H \otimes H) \otimes_H V \bigr) \otimes V &\to (H \otimes H) \otimes_H V,
\end{align*}
by extending linearly the formulas
\begin{align*}
aB &:= (f_{(1)} \otimes g) \otimes_H (f_{(-2)}a)b, \\
Bc &:= (f \otimes g_{(1)}) \otimes_H b(g_{(-2)}c),
\end{align*}
respectively, where $a,c\in V$ and $B=(f \otimes g) \otimes_H b \in (H \otimes H) \otimes_H V$.
Then $(aB)c=a(Bc)$.
\end{lemma}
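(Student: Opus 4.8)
The plan is to verify the identity directly on a chosen representative. By Lemma \ref{lintwd} and its stated analogue for the right product \eqref{right1}, both the left multiplication $a(-)$ and the right multiplication $(-)c$ are well-defined maps on $(H \otimes H) \otimes_H V$, so it suffices to pick a single representative $B = (f \otimes g) \otimes_H b$ and check that the two ways of bracketing produce the same element of $(H \otimes H) \otimes_H V$. No genuine choice issues arise because the well-definedness has already been settled.

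First I would compute the left-hand side $(aB)c$. Applying the left formula gives $aB = (f_{(1)} \otimes g) \otimes_H (f_{(-2)}a)b$, an element whose first tensor slot is $f_{(1)}$, second slot is $g$, and whose $V$-component is $(f_{(-2)}a)b$. Applying the right formula \eqref{interaction} to this element splits the second slot $g$ via the coproduct while leaving the first slot $f_{(1)}$ untouched, so that
\begin{align*}
(aB)c &= (f_{(1)} \otimes g_{(1)}) \otimes_H \bigl((f_{(-2)}a)b\bigr)(g_{(-2)}c).
\end{align*}
Next I would compute $a(Bc)$ in the opposite order. Here $Bc = (f \otimes g_{(1)}) \otimes_H b(g_{(-2)}c)$, and applying $a(-)$ via \eqref{right1} splits the first slot $f$ while leaving $g_{(1)}$ fixed, giving
\begin{align*}
a(Bc) &= (f_{(1)} \otimes g_{(1)}) \otimes_H (f_{(-2)}a)\bigl(b(g_{(-2)}c)\bigr).
\end{align*}

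Comparing the two outputs, the coefficient $(f_{(1)} \otimes g_{(1)})$ in $H \otimes H$ is identical on both sides, and the two $V$-components differ only by the bracketing $\bigl((f_{(-2)}a)b\bigr)(g_{(-2)}c)$ versus $(f_{(-2)}a)\bigl(b(g_{(-2)}c)\bigr)$, which agree by the associativity of the supercommutative product on $V$. This finishes the argument.

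The only point requiring care, and what I regard as the main (though minor) obstacle, is the Sweedler bookkeeping: one must confirm that the coproduct applied to $f$ in the left multiplication and the coproduct applied to $g$ in the right multiplication act on disjoint tensor factors, so these two operations commute and may be performed in either order without reindexing. Because $f$ and $g$ are distinct factors, this is immediate and requires neither coassociativity nor cocommutativity; the entire content of the lemma reduces to associativity of the product on $V$.
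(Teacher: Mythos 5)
Your proof is correct and follows essentially the same route as the paper: compute both bracketings on a representative $(f\otimes g)\otimes_H b$, observe that the $H\otimes H$ coefficients $(f_{(1)}\otimes g_{(1)})$ coincide because the two coproducts act on distinct tensor slots, and reduce the claim to associativity of the product on $V$. Your explicit appeal to Lemma \ref{lintwd} for well-definedness is a sound (and slightly more careful) framing of a point the paper leaves implicit.
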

\begin{proof}
By a straightforward calculation, we have:
\begin{align*}
(aB)c &= \Bigl( (f_{(1)} \otimes g) \otimes_H (f_{(-2)}a)b \Bigr) c \\
&= (f_{(1)} \otimes g_{(1)}) \otimes_H \bigl((f_{(-2)}a)b \bigr) (g_{(-2)}c),
\end{align*}
and
\begin{align*}
a(Bc) &= a \Bigl( (f \otimes g_{(1)}) \otimes_H b(g_{(-2)}c) \Bigr) \\
&= (f_{(1)} \otimes g_{(1)}) \otimes_H (f_{(-2)}a) \bigl( b(g_{(-2)}c) \bigr).
\end{align*}
These two are equal since the product in $V$ is associative.
\end{proof}

Then we have the following:

\begin{proposition}[\textbf{Iterated Leibniz rule}]
For any Poisson pseudoalgebra $V$ and $a_i,b_j\in V$, we have
\begin{align}\label{iterleib}
[&(a_1 \cdots a_m)*(b_1 \cdots b_n)] 
\notag \\
&= \sum_{i=1}^m \sum_{j=1}^n
\epsilon_{ij} \, (a_1 \cdots a_{i-1} a_{i+1} \cdots a_m) \, [a_i*b_j] \, (b_1 \cdots b_{j-1} b_{j+1} \cdots b_n) \,,
\end{align}
where the sign
\begin{align*}
\epsilon_{ij} := (-1)^{ p(a_i)(p(a_{i+1}) + \dots + p(a_m)) + p(b_j)(p(b_1) + \dots + p(b_{j-1})) } \,.
\end{align*}
\end{proposition}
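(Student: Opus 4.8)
The plan is to prove \eqref{iterleib} by a \emph{double induction}, first on $n$ (expanding the second argument with the left Leibniz rule \eqref{leibniz}) and then on $m$ (expanding the first argument with the right Leibniz rule \eqref{leibnizr}). A preliminary point is that every mixed product occurring in \eqref{iterleib} is unambiguous: by the preceding lemma an expression $aBc$ with $a,c\in V$ and $B\in (H\otimes H)\otimes_H V$ may be read either as $(aB)c$ or $a(Bc)$, and by iterating \eqref{interaction} and \eqref{right1} one obtains well-defined products $(a_1\cdots a_k)B$ and $B(c_1\cdots c_l)$. Since \eqref{interaction} and \eqref{right1} are $\mathbb F$-bilinear, left and right multiplication distribute over the finite sums produced below, so it is legitimate to expand term by term.

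First I would establish the case $m=1$ by induction on $n$. For $n=1$ the identity is trivial. For the inductive step I write $b_1\cdots b_n=(b_1\cdots b_{n-1})b_n$ and apply \eqref{leibniz}:
\begin{align*}
[a*(b_1\cdots b_{n-1})b_n]
= [a*(b_1\cdots b_{n-1})]\,b_n
+ (-1)^{p(b_1\cdots b_{n-1})p(b_n)}\,[a*b_n]\,(b_1\cdots b_{n-1}).
\end{align*}
Expanding the first summand by the induction hypothesis and right-multiplying each resulting term by $b_n$ gives precisely the $j<n$ terms of \eqref{iterleib} (with $m=1$), while the second summand is the $j=n$ term; matching the signs $\epsilon_{1j}=(-1)^{p(b_j)(p(b_1)+\dots+p(b_{j-1}))}$ is a short computation using $p(b_1\cdots b_{n-1})=p(b_1)+\dots+p(b_{n-1})$.

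Next, for general $m$ I would induct on $m$, the base case being the one just proved. Writing $a_1\cdots a_m=(a_1\cdots a_{m-1})a_m$ and setting $B:=b_1\cdots b_n\in V$, the right Leibniz rule \eqref{leibnizr} gives
\begin{align*}
[(a_1\cdots a_{m-1})a_m*B]
= (a_1\cdots a_{m-1})\,[a_m*B]
+ (-1)^{p(a_1\cdots a_{m-1})p(a_m)}\,a_m\,[(a_1\cdots a_{m-1})*B].
\end{align*}
I expand $[a_m*B]$ by the case $m=1$ and $[(a_1\cdots a_{m-1})*B]$ by the induction hypothesis on $m$. The first term yields exactly the $i=m$ summands of \eqref{iterleib}, since the empty sum $p(a_{m+1})+\dots+p(a_m)$ makes the $a$-part of $\epsilon_{mj}$ trivial. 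In the second term each summand contains the factor $a_m(a_1\cdots a_{i-1}a_{i+1}\cdots a_{m-1})$ for some $i<m$, and I move $a_m$ into its correct slot at the end of the product using supercommutativity of the product in $V$, which contributes the sign $(-1)^{p(a_m)(p(a_1)+\dots+p(a_{i-1})+p(a_{i+1})+\dots+p(a_{m-1}))}$.

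The main obstacle is purely this sign bookkeeping in the last step. One must check that the right-Leibniz factor $(-1)^{p(a_1\cdots a_{m-1})p(a_m)}$, the transposition sign just mentioned, and the sign $\epsilon_{ij}^{(m-1)}$ supplied by the induction hypothesis recombine to $\epsilon_{ij}$ for every $i<m$. Concretely, every parity $p(a_\ell)$ with $\ell\le m-1$ and $\ell\ne i$ appears once in the right-Leibniz factor and once in the transposition sign, so these cancel modulo $2$ and leave $(-1)^{p(a_i)p(a_m)}$; multiplying this into $\epsilon_{ij}^{(m-1)}$ promotes its $a$-part $p(a_i)\bigl(p(a_{i+1})+\dots+p(a_{m-1})\bigr)$ to $p(a_i)\bigl(p(a_{i+1})+\dots+p(a_m)\bigr)$, while its $b$-part is untouched, giving exactly $\epsilon_{ij}$. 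Verifying this cancellation, together with the analogous but simpler $b$-side computation in the $m=1$ case, completes the proof.
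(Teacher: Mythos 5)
Your proof is correct and takes essentially the same route as the paper, whose entire proof is the one-line remark that the identity ``follows from applying iteratively the left and right Leibniz rules \eqref{leibniz}, \eqref{leibnizr}.'' Your double induction, with the verification that the right-Leibniz sign, the supercommutativity transposition sign, and the inductive sign $\epsilon_{ij}^{(m-1)}$ recombine to $\epsilon_{ij}$, is a faithful and accurate elaboration of exactly that sketch.
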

\begin{proof}
Follows from applying iteratively the left and right Leibniz rules \eqref{leibniz}, \eqref{leibnizr}.
\end{proof}

\subsection{Construction of Poisson $H$-pseudoalgebras from $\mathcal{P}_{H}^{cl}$}\label{Poisson pseudoalgebraConstruction}

In this subsection, we generalize \cite[Theorem 10.7]{BDHK19}, which established a connection between the classical operad and Poisson vertex algebra structures
on an $\mathbb{F}[\partial]$-module $V$.

As before, let $H$ be a cocommutative Hopf algebra, which is purely even, and $V$ be a vector superspace with parity $p$, which is also a left $H$-module. Let $\bar{p} = 1-p$ denote the reverse parity of $p$, and $\Pi V$ denote the same space as $V$ but with parity $\bar{p}$. Note that $\Pi V$ is a left $H$-module too.
Consider the generalized classical operad $\mathcal{P}^{cl}_H(\Pi V)$ corresponding to $\Pi V$
(before we were denoting $\mathcal{P}^{cl}_H(V)$ as $\mathcal{P}^{cl}_H$ for short).
We denote by
\begin{align*}
    W^{cl}_H(\Pi V) = W(\mathcal{P}^{cl}_H(\Pi V)) = \bigoplus_{n \geq -1}W_{H,n}^{cl}(\Pi V) 
\end{align*} 
the universal Lie superalgebra associated to the operad $\mathcal{P}^{cl}_H(\Pi V)$; see Sect.\ \ref{UniversalLie}.
Note that both the operad $\mathcal{P}^{cl}_H(\Pi V)$ and the Lie superalgebra $W^{cl}_H(\Pi V)$
are considered with the parity $\bar{p}$.

\begin{theorem}\label{operadPoisson pseudoalgebra}
 There is a bijective correspondence between odd elements $X\in W_{H,1}^{cl}(\Pi V)$ 
 such that $[X,X]=0$ and Poisson pseudoalgebra structures on $V$, given explicitly by:
\begin{align}\label{ppabrackets}
         ab = (-1)^{p(a)}X\hspace{1mm}^{\begin{tikzpicture}
\tikzset{>={Latex[width=2mm,length=1mm]}}
\tikzstyle{vertex} =[circle, fill = black,inner sep=0pt,minimum size=1mm,scale=1.2]
\node[vertex](v1) at (1,0){};
\node[vertex](v2) at (1.45,0){};
\draw [->,thick] (v1) to (v2);
\end{tikzpicture}}(a \otimes b), \quad [a*b] = (-1)^{p(a)}X\hspace{1mm}^{\begin{tikzpicture}
\tikzset{>={Latex[width=2mm,length=1mm]}}
\tikzstyle{vertex} =[circle, fill = black,inner sep=0pt,minimum size=1mm,scale=1.2]
\node[vertex](v1) at (1,0){};
\node[vertex](v2) at (1.35,0){};
\end{tikzpicture}}(a \otimes b),
\end{align}
     for $a,b \in V$.
\end{theorem}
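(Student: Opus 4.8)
The plan is to unwind both sides of the claimed bijection into explicit data and then match them axiom-by-axiom, organized according to the number of edges appearing in the relevant graphs.

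First I would describe the space $W_{H,1}^{cl}(\Pi V)$ concretely. An element $X$ is an $S_2$-invariant element of $\mathcal{P}^{cl}_H(\Pi V)(2)$, hence by the first cycle condition it is determined by the three maps $X^{\Gamma}$ for $\Gamma \in G_0(2)$: the edgeless graph $\Gamma_0$ and the two single-edge graphs $\Gamma_{\rightarrow}$ (edge $1\to 2$) and $\Gamma_{\leftarrow}$ (edge $2\to 1$). The second cycle condition, applied to the oriented $2$-cycle $1\to 2\to 1$, forces $X^{\Gamma_{\rightarrow}} = -X^{\Gamma_{\leftarrow}}$, while the invariance $X^{(12)}=X$, spelled out through \eqref{snaction}, relates $X^{\Gamma_{\rightarrow}}$ and $X^{\Gamma_{\leftarrow}}$ by the flip; together these translate into supercommutativity of the product defined by the first formula in \eqref{ppabrackets}, and independently the action on $\Gamma_0$ gives the skewsymmetry \eqref{skewsym} of the pseudobracket defined by the second formula. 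The componentwise $H$-linearity of $X^{\Gamma_{\rightarrow}}$ (one connected component, so a diagonal $H$-action) is exactly the $H$-differential property \eqref{hdiff}, and that of $X^{\Gamma_0}$ (two components) is the $H$-bilinearity \eqref{hbil}. This yields a bijection between odd $X\in W_{H,1}^{cl}(\Pi V)$ and pairs consisting of a supercommutative $H$-differential product and a skewsymmetric $H$-bilinear pseudobracket on $V$; the factors $(-1)^{p(a)}$ in \eqref{ppabrackets} absorb the parity shift between $V$ and $\Pi V$, which is also what makes $X$ odd.

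Next I would translate the remaining condition $[X,X]=0$. Since $X$ is odd and $\operatorname{char}\mathbb F=0$, we have $[X,X]=2\,X\square X$, so the condition is equivalent to $X\square X=0$, where $X\square X = X\circ_1 X + X\circ_2 X + (X\circ_2 X)^{(12)}$ by \eqref{square1}. As $X\square X\in W_{H,2}^{cl}(\Pi V)$ is $S_3$-invariant and lies in the operad, its vanishing is equivalent to the vanishing of $(X\square X)^\Gamma$ on one representative from each $S_3$-orbit in $G_0(3)$, since graphs with a cycle contribute nothing by the first cycle condition. Graphs in $G_0(3)$ are forests on three vertices, so they have at most two edges, and up to the $S_3$-action there are exactly three shapes: the edgeless graph, a single edge, and a two-edge path. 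I would compute $(X\square X)^\Gamma$ on each of these three representatives using the explicit $\circ_k$-product formulas \eqref{circle1} and \eqref{circleatk}, noting that each cocomposition distributes the edges of $\Gamma$ (via Lemma \ref{edgecorrespondence}) between an inner and an outer $2$-graph, on which $X$ evaluates either to the product $X^{\Gamma_{\rightarrow}}$ or to the bracket $X^{\Gamma_0}$.

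The heart of the proof, and the main obstacle, is matching these three evaluations to the Poisson axioms through the Hopf-algebraic bookkeeping. On the edgeless $3$-graph all three terms of $X\square X$ involve only $X^{\Gamma_0}$, and after identifying the operadic compositions with the bracket compositions of Remark \ref{bracketcomp} one should recover precisely the Jacobi identity \eqref{jacid}. On a single-edge graph one factor of $X$ is evaluated on $\Gamma_{\rightarrow}$ (the product) and the other on $\Gamma_0$ (the bracket); here carefully tracking the external connectedness, the iterated coproducts $\Delta^{(s-1)}$, and the antipode-twisted actions $f_{(-2)}$ in \eqref{circle1} should reproduce the interaction products \eqref{interaction} and \eqref{right1}, hence the Leibniz rule \eqref{leibniz} (its right form \eqref{leibnizr} being automatic from skewsymmetry). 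On the two-edge path both factors of $X$ are evaluated on single-edge graphs, and the computation should collapse to associativity of the product. The delicate points throughout are the Sweedler and antipode manipulations, the Koszul--Quillen signs \eqref{signcircle}, and the component-relabeling permutations $\rho^\Gamma_k$ and $\widetilde{\sigma}$; cocommutativity of $H$ is exactly what lets the disjoint actions of the various $f_{(-2)}$'s commute, as recorded in Lemmas \ref{disjoint} and \ref{disjointk}. Because each graph evaluation is equivalent to a single axiom, the two directions of the bijection and the equivalence $[X,X]=0 \Leftrightarrow$ Poisson pseudoalgebra follow simultaneously.
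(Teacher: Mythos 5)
Your proposal is correct and takes essentially the same approach as the paper's own proof: the same unpacking of $X$ into its values on the two acyclic $2$-graphs (cycle conditions plus $S_2$-invariance giving supercommutativity, skewsymmetry, $H$-bilinearity, and the $H$-differential property), and the same reduction of $[X,X]=2\,X\square X=0$ to evaluations of $X\square X = X\circ_1 X + X\circ_2 X + (X\circ_2 X)^{(12)}$ on the edgeless $3$-graph, a one-edge $3$-graph, and the path $1\to 2\to 3$, which match the Jacobi identity, the Leibniz rule, and associativity, respectively. One shared imprecision worth noting: the oriented two-edge acyclic $3$-graphs fall into three distinct $S_3$-orbits (head-to-tail, head-to-head, tail-to-tail), so reducing to the single path $1\to2\to3$ requires the second cycle condition in addition to $S_3$-invariance --- a one-line fix that applies equally to the paper's own wording.
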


\begin{proof}
We adapt the proof of Theorem 10.7 from \cite{BDHK19}. Recall that $W_{H,1}^{cl}(\Pi V)$ consists of all permutation invariant elements
$X \in \mathcal{P}^{cl}_H(\Pi V)(2)$, that is $X^\sigma = X$ where $\sigma = (12) \in S_2$. Elements $X \in \mathcal{P}^{cl}_H(\Pi V)(2)$
correspond to collections of linear maps (cf.\ \eqref{pclhdef}):
\begin{align*}
X^{\Gamma} \colon V \otimes V \rightarrow H^{\otimes s(\Gamma)} \otimes_H V, \qquad \Gamma \in G_0(2),
\end{align*}
satisfying the second cycle condition \eqref{cycle2} and the componentwise $H$-linearity \eqref{complinear}.
The second cycle condition gives that
\begin{align*}
    X\hspace{1mm}^{\begin{tikzpicture}
\tikzset{>={Latex[width=2mm,length=1mm]}}
\tikzstyle{vertex} =[circle, fill = black,inner sep=0pt,minimum size=1mm,scale=1.2]
\node[vertex](v1) at (1,0){};
\node[vertex](v2) at (1.45,0){};
\draw [->,thick] (v1) to (v2);
\end{tikzpicture}}
+ X^{\begin{tikzpicture}
\tikzset{>={Latex[width=2mm,length=1mm]}}
\tikzstyle{vertex} =[circle, fill = black,inner sep=0pt,minimum size=1mm,scale=1.2]
\node[vertex](v1) at (1,0){};
\node[vertex](v2) at (1.45,0){};
\draw [->,thick] (v2) to (v1);
\end{tikzpicture}}
= 0;
\end{align*}
therefore, $X$ is uniquely determined by the two maps
\begin{align*}
X\hspace{1mm}^{\begin{tikzpicture}
\tikzset{>={Latex[width=2mm,length=1mm]}}
\tikzstyle{vertex} =[circle, fill = black,inner sep=0pt,minimum size=1mm,scale=1.2]
\node[vertex](v1) at (1,0){};
\node[vertex](v2) at (1.35,0){};
\end{tikzpicture}}
&\colon V \otimes V \rightarrow (H \otimes H) \otimes_H V,
\\
X\hspace{1mm}^{\begin{tikzpicture}
\tikzset{>={Latex[width=2mm,length=1mm]}}
\tikzstyle{vertex} =[circle, fill = black,inner sep=0pt,minimum size=1mm,scale=1.2]
\node[vertex](v1) at (1,0){};
\node[vertex](v2) at (1.45,0){};
\draw [->,thick] (v1) to (v2);
\end{tikzpicture}}
&\colon V \otimes V \rightarrow H \otimes_H V \cong V.
\end{align*}
Then the componentwise $H$-linearity means that the map
$X\hspace{1mm}^{\begin{tikzpicture}
\tikzset{>={Latex[width=2mm,length=1mm]}}
\tikzstyle{vertex} =[circle, fill = black,inner sep=0pt,minimum size=1mm,scale=1.2]
\node[vertex](v1) at (1,0){};
\node[vertex](v2) at (1.35,0){};
\end{tikzpicture}}$
is $H$-bilinear, and
$X\hspace{1mm}^{\begin{tikzpicture}
\tikzset{>={Latex[width=2mm,length=1mm]}}
\tikzstyle{vertex} =[circle, fill = black,inner sep=0pt,minimum size=1mm,scale=1.2]
\node[vertex](v1) at (1,0){};
\node[vertex](v2) at (1.45,0){};
\draw [->,thick] (v1) to (v2);
\end{tikzpicture}}$
is $H$-linear.
Moreover, these two maps are even with respect to the parity $p$, because $\bar{p}(X)=\bar{1}$.

 Next, we claim that the condition $X^{(12)} = X$ is equivalent to the supercommutativity of the product and the skewsymmetry of the pseudobracket for the parity $p$.
 Indeed, if we evaluate $X^{(12)} = X$ on the connected $2$-graph, we get
\begin{align*}
    X\hspace{1mm}^{\begin{tikzpicture}
\tikzset{>={Latex[width=2mm,length=1mm]}}
\tikzstyle{vertex} =[circle, fill = black,inner sep=0pt,minimum size=1mm,scale=1.2]
\node[vertex](v1) at (1,0){};
\node[vertex](v2) at (1.45,0){};
\draw [->,thick] (v1) to (v2);
\end{tikzpicture}}(a \otimes b) &= (X^{(12)})^{\begin{tikzpicture}
\tikzset{>={Latex[width=2mm,length=1mm]}}
\tikzstyle{vertex} =[circle, fill = black,inner sep=0pt,minimum size=1mm,scale=1.2]
\node[vertex](v1) at (1,0){};
\node[vertex](v2) at (1.45,0){};
\draw [->,thick] (v1) to (v2);
\end{tikzpicture}}(a \otimes b) = (-1)^{\bar{p}(a)\bar{p}(b)}X^{\begin{tikzpicture}
\tikzset{>={Latex[width=2mm,length=1mm]}}
\tikzstyle{vertex} =[circle, fill = black,inner sep=0pt,minimum size=1mm,scale=1.2]
\node[vertex](v1) at (1,0){};
\node[vertex](v2) at (1.45,0){};
\draw [->,thick] (v2) to (v1);
\end{tikzpicture}}(b \otimes a) \notag \\
&= (-1)^{p(a)+p(b)+p(a)p(b)}X^{\begin{tikzpicture}
\tikzset{>={Latex[width=2mm,length=1mm]}}
\tikzstyle{vertex} =[circle, fill = black,inner sep=0pt,minimum size=1mm,scale=1.2]
\node[vertex](v1) at (1,0){};
\node[vertex](v2) at (1.45,0){};
\draw [->,thick] (v1) to (v2);
\end{tikzpicture}}(b \otimes a),
\end{align*}
which implies
\begin{align*}
    ab = (-1)^{p(a)}X\hspace{1mm}^{\begin{tikzpicture}
\tikzset{>={Latex[width=2mm,length=1mm]}}
\tikzstyle{vertex} =[circle, fill = black,inner sep=0pt,minimum size=1mm,scale=1.2]
\node[vertex](v1) at (1,0){};
\node[vertex](v2) at (1.45,0){};
\draw [->,thick] (v1) to (v2);
\end{tikzpicture}}(a \otimes b) = (-1)^{p(b)+p(a)p(b)}X^{\begin{tikzpicture}
\tikzset{>={Latex[width=2mm,length=1mm]}}
\tikzstyle{vertex} =[circle, fill = black,inner sep=0pt,minimum size=1mm,scale=1.2]
\node[vertex](v1) at (1,0){};
\node[vertex](v2) at (1.45,0){};
\draw [->,thick] (v1) to (v2);
\end{tikzpicture}}(b \otimes a) = (-1)^{p(a)p(b)}ba.
\end{align*}
If we evaluate $X^{(12)} = X$ on the disconnected $2$-graph, we get
\begin{align*}
    X\hspace{1mm}^{\begin{tikzpicture}
\tikzset{>={Latex[width=2mm,length=1mm]}}
\tikzstyle{vertex} =[circle, fill = black,inner sep=0pt,minimum size=1mm,scale=1.2]
\node[vertex](v1) at (1,0){};
\node[vertex](v2) at (1.35,0){};
\end{tikzpicture}}(a \otimes b) &=  (X^{(12)})\hspace{1mm}^{\begin{tikzpicture}
\tikzset{>={Latex[width=2mm,length=1mm]}}
\tikzstyle{vertex} =[circle, fill = black,inner sep=0pt,minimum size=1mm,scale=1.2]
\node[vertex](v1) at (1,0){};
\node[vertex](v2) at (1.35,0){};
\end{tikzpicture}}(a \otimes b) = (-1)^{\bar{p}(a)\bar{p}(b)}(\sigma \otimes_H 1)X\hspace{1mm}^{\begin{tikzpicture}
\tikzset{>={Latex[width=2mm,length=1mm]}}
\tikzstyle{vertex} =[circle, fill = black,inner sep=0pt,minimum size=1mm,scale=1.2]
\node[vertex](v1) at (1,0){};
\node[vertex](v2) at (1.35,0){};
\end{tikzpicture}}(b \otimes a),
\end{align*}
which implies
\begin{align*}
[a*b]  = (-1)^{p(a)}X\hspace{1mm}^{\begin{tikzpicture}
\tikzset{>={Latex[width=2mm,length=1mm]}}
\tikzstyle{vertex} =[circle, fill = black,inner sep=0pt,minimum size=1mm,scale=1.2]
\node[vertex](v1) at (1,0){};
\node[vertex](v2) at (1.35,0){};
\end{tikzpicture}}(a \otimes b) = -(-1)^{p(a)p(b)}(\sigma \otimes_H 1)[b*a].
\end{align*}

Now we will show that $[X,X]=2X \square X =0$ is equivalent to three properties: associativity of the product $ab$, the Jacobi identity of the pseudobracket $[a*b]$ and the Leibniz rule. We will do so by evaluating $X \square X$ on the $3$-graphs below, because $X \square X$ is invariant under the action of the symmetric group and any acyclic $3$-graph is obtained from one of these under this action (cf.\ Example \ref{exg012}):

\begin{figure}[h]
     \begin{subfigure}[b]{0.3\textwidth}
          \centering
          \resizebox{1.8cm}{!}{
          \begin{tikzpicture}
          \tikzset{>={Latex[width=2mm,length=2mm]}}
          \tikzstyle{vertex} =[circle, fill = black,inner sep=0pt,minimum size=2mm,scale=1.4]
          \node[vertex,label=below:$1$](v1) at (1,0){};
          \node[vertex,label=below:$2$](v2) at (2,0){};
          \node[vertex,label=below:$3$](v3) at (3,0){};
          \end{tikzpicture}
          }
          \caption{$G_1$}
     \end{subfigure}
     \begin{subfigure}[b]{0.3\textwidth}
          \centering
          \resizebox{1.8cm}{!}{\begin{tikzpicture}
          \tikzset{>={Latex[width=2mm,length=2mm]}}
          \tikzstyle{vertex} =[circle, fill = black,inner sep=0pt,minimum size=2mm,scale=1.4]
          \node[vertex,label=below:$1$](v1) at (1,0){};
          \node[vertex,label=below:$2$](v2) at (2,0){};
          \node[vertex,label=below:$3$](v3) at (3,0){};
          \draw [->,thick] (v2) to (v3);
          \end{tikzpicture}} 
          \caption{$G_2$}
     \end{subfigure}
     \begin{subfigure}[b]{0.3\textwidth}
          \centering
          \resizebox{1.8cm}{!}{\begin{tikzpicture}
          \tikzset{>={Latex[width=2mm,length=2mm]}}
          \tikzstyle{vertex} =[circle, fill = black,inner sep=0pt,minimum size=2mm,scale=1.4]
          \node[vertex,label=below:$1$](v1) at (1,0){};
          \node[vertex,label=below:$2$](v2) at (2,0){};
          \node[vertex,label=below:$3$](v3) at (3,0){};
          \draw [->,thick] (v1) to (v2);
          \draw [->,thick] (v2) to (v3);
          \end{tikzpicture}}  
          \caption{$G_3$}
     \end{subfigure}
 \end{figure}

\noindent
Recall formula (\ref{square1}):
\begin{align*}
    X \square X = X \circ_1 X + X \circ_2 X + (X \circ_2 X)^{(12)}.
\end{align*}
Evaluating each summand above on graph $G_1$, we have:
\begin{align*}
(X \circ_1 X&)^{\begin{tikzpicture}
\tikzset{>={Latex[width=2mm,length=1mm]}}
\tikzstyle{vertex} =[circle, fill = black,inner sep=0pt,minimum size=1mm,scale=1.2]
\node[vertex](v1) at (1,0){};
\node[vertex](v2) at (1.35,0){};
\node[vertex](v3) at (1.7,0){};
\end{tikzpicture}}(a \otimes b \otimes c) 
= (-1)^{p(b)}[[a*b]*c], \\
(X \circ_2 X&)^{\begin{tikzpicture}
\tikzset{>={Latex[width=2mm,length=1mm]}}
\tikzstyle{vertex} =[circle, fill = black,inner sep=0pt,minimum size=1mm,scale=1.2]
\node[vertex](v1) at (1,0){};
\node[vertex](v2) at (1.35,0){};
\node[vertex](v3) at (1.7,0){};
\end{tikzpicture}}(a \otimes b \otimes c)
= (-1)^{1+p(b)}[a*[b*c]],  \\
((X \circ_2 X&)^{(12)})^{\begin{tikzpicture}
\tikzset{>={Latex[width=2mm,length=1mm]}}
\tikzstyle{vertex} =[circle, fill = black,inner sep=0pt,minimum size=1mm,scale=1.2]
\node[vertex](v1) at (1,0){};
\node[vertex](v2) at (1.35,0){};
\node[vertex](v3) at (1.7,0){};
\end{tikzpicture}} (a \otimes b \otimes c) \\
&= (-1)^{\bar{p}(a)\bar{p}(b)}((\sigma \otimes 1) \otimes_H 1)(X \circ_2 X)^{\begin{tikzpicture}
\tikzset{>={Latex[width=2mm,length=1mm]}}
\tikzstyle{vertex} =[circle, fill = black,inner sep=0pt,minimum size=1mm,scale=1.2]
\node[vertex](v1) at (1,0){};
\node[vertex](v2) at (1.35,0){};
\node[vertex](v3) at (1.7,0){};
\end{tikzpicture}} 
(b \otimes a \otimes c) \notag \\
&= (-1)^{p(b)+p(a)p(b)}((\sigma \otimes 1) \otimes_H 1)[b*[a*c]].
\end{align*}
Hence, $(X \square X)^{G_1} =0$ is equivalent to the Jacobi identity \eqref{jacid}.

Next, we evaluate $X \square X$ on graph $G_2$. Similarly to the previous case, we have:
\begin{align*}
    (X \circ_1 X)^{\begin{tikzpicture}
\tikzset{>={Latex[width=2mm,length=1mm]}}
\tikzstyle{vertex} =[circle, fill = black,inner sep=0pt,minimum size=1mm,scale=1.2]
\node[vertex](v1) at (1,0){};
\node[vertex](v2) at (1.4,0){};
\node[vertex](v3) at (1.8,0){};
\draw[->,thick] (v2) to (v3);
\end{tikzpicture}}(a \otimes b \otimes c)
&= (-1)^{p(b)}[a*b]c,  \\
(X \circ_2 X)^{\begin{tikzpicture}
\tikzset{>={Latex[width=2mm,length=1mm]}}
\tikzstyle{vertex} =[circle, fill = black,inner sep=0pt,minimum size=1mm,scale=1.2]
\node[vertex](v1) at (1,0){};
\node[vertex](v2) at (1.4,0){};
\node[vertex](v3) at (1.8,0){};
\draw[->,thick] (v2) to (v3);
\end{tikzpicture}} (a \otimes b \otimes c) &= (-1)^{\bar{p}(a)} X^{\begin{tikzpicture}
\tikzset{>={Latex[width=2mm,length=1mm]}}
\tikzstyle{vertex} =[circle, fill = black,inner sep=0pt,minimum size=1mm,scale=1.2]
\node[vertex](v1) at (1,0){};
\node[vertex](v2) at (1.4,0){};
\end{tikzpicture}} (a \otimes X^{\begin{tikzpicture}
\tikzset{>={Latex[width=2mm,length=1mm]}}
\tikzstyle{vertex} =[circle, fill = black,inner sep=0pt,minimum size=1mm,scale=1.2]
\node[vertex](v1) at (1,0){};
\node[vertex](v2) at (1.35,0){};
\draw[->,thick] (v1) to (v2);
\end{tikzpicture}} (b \otimes c)) \notag \\ 
&= (-1)^{1+p(b)}[a*bc], 
\end{align*}
For $(X \circ_2 X)^{(12)}$, we use the identity $(X \circ_2 X)^{(12)} =(X \circ_1 X)^{(23)}$ (cf.\ (\ref{square1})) to get:
\begin{align*}
    ((X \circ_2 X)^{(12)})^{\begin{tikzpicture}
\tikzset{>={Latex[width=2mm,length=1mm]}}
\tikzstyle{vertex} =[circle, fill = black,inner sep=0pt,minimum size=1mm,scale=1.2]
\node[vertex](v1) at (1,0){};
\node[vertex](v2) at (1.4,0){};
\node[vertex](v3) at (1.8,0){};
\draw[->,thick] (v2) to (v3);
\end{tikzpicture}} (a \otimes b \otimes c) &= ((X \circ_1 X)^{(23)})^{\begin{tikzpicture}
\tikzset{>={Latex[width=2mm,length=1mm]}}
\tikzstyle{vertex} =[circle, fill = black,inner sep=0pt,minimum size=1mm,scale=1.2]
\node[vertex](v1) at (1,0){};
\node[vertex](v2) at (1.4,0){};
\node[vertex](v3) at (1.8,0){};
\draw[->,thick] (v2) to (v3);
\end{tikzpicture}} (a \otimes b \otimes c)  \notag \\
&= (-1)^{\bar p (b) \bar p(c)}(X \circ_1 X)^{\begin{tikzpicture}
\tikzset{>={Latex[width=2mm,length=1mm]}}
\tikzstyle{vertex} =[circle, fill = black,inner sep=0pt,minimum size=1mm,scale=1.2]
\node[vertex](v1) at (1,0){};
\node[vertex](v2) at (1.4,0){};
\node[vertex](v3) at (1.8,0){};
\draw[->,thick] (v3) to (v2);
\end{tikzpicture}} (a \otimes c \otimes b)  \notag \\
&= -(-1)^{\bar p (b) \bar p(c)}(X \circ_1 X)^{\begin{tikzpicture}
\tikzset{>={Latex[width=2mm,length=1mm]}}
\tikzstyle{vertex} =[circle, fill = black,inner sep=0pt,minimum size=1mm,scale=1.2]
\node[vertex](v1) at (1,0){};
\node[vertex](v2) at (1.4,0){};
\node[vertex](v3) at (1.8,0){};
\draw[->,thick] (v2) to (v3);
\end{tikzpicture}} (a \otimes c \otimes b)  \notag \\
&= (-1)^{p(b)+p(b)p(c)}[a*c]b.
\end{align*}
Thus, $(X \square X)^{G_2} =0$ is equivalent to the Leibniz rule \eqref{leibniz}.

Finally, we evaluate $X \square X$ on graph $G_3$:
\begin{align*}
    (X \circ_1 X&)^{\begin{tikzpicture}
\tikzset{>={Latex[width=2mm,length=1mm]}}
\tikzstyle{vertex} =[circle, fill = black,inner sep=0pt,minimum size=1mm,scale=1.2]
\node[vertex](v1) at (1,0){};
\node[vertex](v2) at (1.4,0){};
\node[vertex](v3) at (1.8,0){};
\draw[->,thick] (v1) to (v2);
\draw[->,thick] (v2) to (v3);
\end{tikzpicture}}(a \otimes b \otimes c) = X^{\begin{tikzpicture}
\tikzset{>={Latex[width=2mm,length=1mm]}}
\tikzstyle{vertex} =[circle, fill = black,inner sep=0pt,minimum size=1mm,scale=1.2]
\node[vertex](v1) at (1,0){};
\node[vertex](v2) at (1.4,0){};
\draw[->,thick] (v1) to (v2);
\end{tikzpicture}} (X^{\begin{tikzpicture}
\tikzset{>={Latex[width=2mm,length=1mm]}}
\tikzstyle{vertex} =[circle, fill = black,inner sep=0pt,minimum size=1mm,scale=1.2]
\node[vertex](v1) at (1,0){};
\node[vertex](v2) at (1.35,0){};
\draw[->,thick] (v1) to (v2);
\end{tikzpicture}} (a \otimes b) \otimes c)
= (-1)^{p(b)}(ab)c, \\
(X \circ_2 X&)^{\begin{tikzpicture}
\tikzset{>={Latex[width=2mm,length=1mm]}}
\tikzstyle{vertex} =[circle, fill = black,inner sep=0pt,minimum size=1mm,scale=1.2]
\node[vertex](v1) at (1,0){};
\node[vertex](v2) at (1.4,0){};
\node[vertex](v3) at (1.8,0){};
\draw[->,thick] (v1) to (v2);
\draw[->,thick] (v2) to (v3);
\end{tikzpicture}} (a \otimes b \otimes c) = (-1)^{\bar{p}(a)} X^{\begin{tikzpicture}
\tikzset{>={Latex[width=2mm,length=1mm]}}
\tikzstyle{vertex} =[circle, fill = black,inner sep=0pt,minimum size=1mm,scale=1.2]
\node[vertex](v1) at (1,0){};
\node[vertex](v2) at (1.4,0){};
\draw[->,thick] (v1) to (v2);
\end{tikzpicture}} (a \otimes X^{\begin{tikzpicture}
\tikzset{>={Latex[width=2mm,length=1mm]}}
\tikzstyle{vertex} =[circle, fill = black,inner sep=0pt,minimum size=1mm,scale=1.2]
\node[vertex](v1) at (1,0){};
\node[vertex](v2) at (1.35,0){};
\draw[->,thick] (v1) to (v2);
\end{tikzpicture}} (b \otimes c)) \\
&= (-1)^{1+p(b)}a(bc), \\
((X \circ_2 X&)^{(12)})^{\begin{tikzpicture}
\tikzset{>={Latex[width=2mm,length=1mm]}}
\tikzstyle{vertex} =[circle, fill = black,inner sep=0pt,minimum size=1mm,scale=1.2]
\node[vertex](v1) at (1,0){};
\node[vertex](v2) at (1.4,0){};
\node[vertex](v3) at (1.8,0){};
\draw[->,thick] (v1) to (v2);
\draw[->,thick] (v2) to (v3);
\end{tikzpicture}} (a \otimes b \otimes c) = (-1)^{\bar{p}(a)\bar{p}(b)}(X \circ_2 X)^{\begin{tikzpicture}
\tikzset{>={Latex[width=2mm,length=1mm]}}
\tikzstyle{vertex} =[circle, fill = black,inner sep=0pt,minimum size=1mm,scale=1.2]
\node[vertex](v1) at (1,0){};
\node[vertex](v2) at (1.4,0){};
\node[vertex](v3) at (1.8,0){};
\draw[->,thick] (v1) to [out=45,in=135] (v3);
\draw[->,thick] (v2) to (v1);
\end{tikzpicture}} (b \otimes a \otimes c)  \notag \\ &= (-1)^{\bar{p}(b)+\bar{p}(a)\bar{p}(b)}X^{\begin{tikzpicture}
\tikzset{>={Latex[width=2mm,length=1mm]}}
\tikzstyle{vertex} =[circle, fill = black,inner sep=0pt,minimum size=1mm,scale=1.2]
\node[vertex](v1) at (1,0){};
\node[vertex](v2) at (1.4,0){};
\draw[->,thick] (v1) to [out=45,in=135] (v2);
\draw[->,thick] (v2) to [out=-135,in=-45] (v1);
\end{tikzpicture}} (\cdots)
= 0,
\end{align*}
using the first cycle condition for $X$.
Thus, $(X \square X)^{G_3} =0$ is equivalent to the associativity of the product: $(ab)c = a(bc)$.
\end{proof}

\begin{remark}
As an application of the proof of Theorem \ref{operadPoisson pseudoalgebra}, one can also derive the right Leibniz rule (\ref{leibnizr}) by evaluating $X \square X$ on the graph \begin{tikzpicture}
\tikzset{>={Latex[width=2mm,length=1mm]}}
\tikzstyle{vertex} =[circle, fill = black,inner sep=0pt,minimum size=1mm,scale=1.2]
\node[vertex](v1) at (1,0){};
\node[vertex](v2) at (1.4,0){};
\node[vertex](v3) at (1.8,0){};
\draw[->,thick] (v1) to (v2);
\end{tikzpicture}\,.
\end{remark}

\subsection{Examples of Poisson pseudoalgebras}\label{Poisson pseudoalgebra4examples}

Every Poisson pseudoalgebra is in particular a Lie pseudoalgebra. Conversely, given a Lie pseudoalgebra $L$, we can generate a Poisson pseudoalgebra from it 
by taking the symmetric superalgebra $S(L)$ over $L$. Note that $S(L)$ has an associative supercommutative product. The left action of $H$ on $L$ extends uniquely to $S(L)$,
so that $S(L)$ is an $H$-differential algebra (see \eqref{hdiff}). The pseudobracket on $L$ extends uniquely to $S(L)$ via the iterated Leibniz rule \eqref{iterleib}.

\begin{proposition}
Given a Lie pseudoalgebra $L$, the symmetric superalgebra $S(L)$ has a canonical structure of a Poisson pseudoalgebra described above.
\end{proposition}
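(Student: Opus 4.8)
The plan is to equip $S(L)$ with the two pieces of structure indicated above and then verify the Poisson pseudoalgebra axioms one by one. First I would make $S(L)$ an $H$-differential algebra. On a monomial I define $h(a_1\cdots a_n) := (h_{(1)}a_1)\cdots(h_{(n)}a_n)$, where $\Delta^{(n-1)}(h) = h_{(1)}\otimes\cdots\otimes h_{(n)}$. Cocommutativity of $H$ together with supercommutativity of the product of $S(L)$ makes the right-hand side independent of the chosen ordering of the factors, so it descends to $S(L)$; coassociativity \eqref{coas} shows it is a genuine left $H$-module structure, and by construction it satisfies the $H$-differential axiom \eqref{hdiff}. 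Thus $S(L)$ is a supercommutative associative $H$-differential algebra and the inclusion $L \hookrightarrow S(L)$ is $H$-linear.

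Next I would extend the pseudobracket. Since $L$ generates $S(L)$ and the left and right Leibniz rules \eqref{leibniz}, \eqref{leibnizr} are required to hold, the extension is forced, and the cleanest way to construct it is via two applications of the universal property of the symmetric algebra. For fixed $a\in L$, the left Leibniz rule \eqref{leibniz} says that $[a*\cdot]$ is a derivation from $S(L)$ into $(H\otimes H)\otimes_H S(L)$ equipped with the right multiplication \eqref{interaction}; since this gives $(H\otimes H)\otimes_H S(L)$ the structure of a symmetric $S(L)$-module (associativity being the content of the preceding lemma, and graded-commutativity following from supercommutativity of $S(L)$ and cocommutativity of $H$), such a derivation exists and is uniquely determined by its restriction $b\mapsto[a*b]$ to $L$. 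Setting $E_B(a):=[a*B]$ for $a\in L$ and extending, for each fixed $B\in S(L)$, to a derivation in the first argument with respect to the left multiplication \eqref{right1} then produces a bilinear map $S(L)\otimes S(L)\to (H\otimes H)\otimes_H S(L)$, well defined on the quotient $S(L)$ by construction, and given explicitly by the iterated Leibniz formula \eqref{iterleib}. Verifying that \eqref{interaction} and \eqref{right1} indeed make $(H\otimes H)\otimes_H S(L)$ into a symmetric bimodule is the first genuinely technical step.

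With the bracket defined, the two Leibniz rules hold by construction, and I would check the remaining Lie-pseudoalgebra axioms by induction on degrees. The $H$-bilinearity \eqref{hbil} follows by induction on the bidegree, the base case being \eqref{hbil} on $L$ and the inductive step using the $H$-differential property \eqref{hdiff} and the explicit forms \eqref{interaction}, \eqref{right1}. Skewsymmetry \eqref{skewsym} I would prove by induction on the total degree: the base case is the skewsymmetry on $L$, and the inductive step lowers the degree in one argument by the Leibniz rule and matches it against the other by the right Leibniz rule, the key tool being the identity \eqref{mediumstep} relating the left and right one-sided products.

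The main obstacle is the Jacobi identity \eqref{jacid}. My plan is a generation argument. Consider the Jacobiator
\[
J(a,b,c) := [a*[b*c]] - (-1)^{p(a)p(b)}\bigl((\sigma\otimes\text{id})\otimes_H\text{id}\bigr)[b*[a*c]] - [[a*b]*c]
\]
as a trilinear expression on $S(L)$. Using the Leibniz rules and the already established skewsymmetry, one shows that $J$ is a derivation in each of its three arguments with respect to the appropriate one-sided product; consequently $J(a,b,c)$ vanishes on an arbitrary monomial in any one slot as soon as it vanishes when that slot is replaced by each generator, and iterating over the three slots reduces everything to the case $a,b,c\in L$, where $J(a,b,c)=0$ is precisely the Jacobi identity of the Lie pseudoalgebra $L$. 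Establishing the derivation property of $J$ — that is, checking that the Leibniz rules propagate compatibly through the nested brackets, with the correct Koszul--Quillen signs and the correct interaction between the iterated coproducts and $\otimes_H$ — is the heart of the argument and the step I expect to be most delicate. Once Jacobi is verified, all the axioms hold and $S(L)$ is a Poisson pseudoalgebra.
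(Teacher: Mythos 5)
Your proposal is correct and takes essentially the same route as the paper: the paper's own proof is a short remark deferring to the Poisson vertex algebra case (\cite[Theorem 1.15]{BDK09}) and to the classical Lie superalgebra argument, and what it defers to is precisely the extension-by-Leibniz-rules scheme you spell out --- the $H$-module-algebra structure on $S(L)$ via iterated coproducts, the derivation-extension of the pseudobracket using the $S(L)$-module structures \eqref{interaction}, \eqref{right1} (whose compatibility is the paper's unnamed associativity lemma), and the reduction of $H$-bilinearity, skewsymmetry, and the Jacobi identity to generators. Your outline supplies the details the paper omits and correctly identifies the genuinely delicate points, namely the well-definedness of the symmetric bimodule structure and the propagation of the Leibniz rules (with identity \eqref{mediumstep}) through nested brackets.
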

\begin{proof}
In the case of Poisson vertex algebras (corresponding to $H=\mathbb{F}[\partial]$), a detailed proof is given in \cite[Theorem 1.15]{BDK09}. As our approach is more symmetric and our formula \eqref{iterleib} is the same as in the well-known case of Lie superalgebras (when $H=\mathbb F$), the proof in our setting is simpler.
\end{proof}

For the rest of this subsection, we assume that $H=U(\mathfrak{d})$ is the universal enveloping algebra of an $N$-dimensional Lie algebra $\mathfrak{d}$. We fix a basis $\{\partial_1,\dots,\partial_N\}$ for $\mathfrak d$, and the Poincar\'e--Birkhoff--Witt basis of $H$ consisting of all ordered monomials
\begin{equation*}
\partial^I := \partial_1^{i_1} \cdots \partial_N^{i_N} \,, \qquad I=(i_1,\dots,i_N) \in \mathbb{Z}_{\ge0}^N \,.
\end{equation*}
The first three examples below are adapted from \cite{BDK20}, and the last two from \cite{BDK01}.

\begin{example}[\textbf{Free superboson}]\label{freeboson}
Let $\mathfrak{g}$ be a finite-dimensional vector superspace with parity $p$ and a supersymmetric nondegenerate bilinear map $\beta\colon \mathfrak{g} \times \mathfrak{g} \rightarrow \mathfrak{d}$. The supersymmetry of $\beta$ means that $\beta(a,b) = (-1)^{p(a)p(b)}\beta(b,a)$ for $a,b \in \mathfrak{g}$ and $\beta(a,b) =0$ whenever $p(a) \neq p(b)$. The \emph{free superboson Lie pseudoalgebra} is the left $H$-module
\begin{align*}
    L^\beta_{\mathfrak{g}} := (H \otimes \mathfrak{g}) \oplus \mathbb{F}K, \quad\text{where}\quad p(K) = \bar{0}, \quad \mathfrak{d} K = 0,
\end{align*}
with the pseudobracket given by:
\begin{align}\label{bosonbracket}
    [a*b] = (\beta(a,b) \otimes 1) \otimes_H K, \quad [a*K]=0, \qquad a,b \in \mathfrak{g},
\end{align}
uniquely extended to $L^\beta_{\mathfrak{g}}$ by $H$-bilinearity \eqref{hbil}. 
When $\mathfrak{g}$ is purely even, we get the \emph{free boson Lie pseudoalgebra}.
The symmetric superalgebra $S(L^\beta_{\mathfrak{g}})$ is a Poisson pseudoalgebra, in which the element $K$ is central, i.e., it has a trivial pseudobracket with
every other element. The quotient Poisson pseudoalgebra
\begin{align*}
    B^\beta_{\mathfrak{g}} := S(L^\beta_{\mathfrak{g}}) / S(L^\beta_{\mathfrak{g}}) (K-1) \cong S(H \otimes \mathfrak{g})
\end{align*}
is called the \emph{free superboson Poisson pseudoalgebra}.
The pseudobracket in $B^\beta_{\mathfrak{g}}$ is given by \eqref{bosonbracket} with $K=1$ on the generators from $\mathfrak{g}$, and then is extended uniquely
by $H$-bilinearity \eqref{hbil} and the iterated Leibniz rule \eqref{iterleib}. In the case when $\mathfrak{g}$ is purely even, $B^\beta_{\mathfrak{g}}$ is the space of polynomials
in $\partial^I u_l$ where $I \in \mathbb{Z}_{\ge0}^N$ and $\{u_l\}$ is a basis for $\mathfrak{g}$.
\end{example}

\begin{example}[\textbf{Free superfermion}]\label{fermionexample}
Let $\mathfrak{g}$ be a finite-dimensional vector superspace with parity $p$ and a super-skewsymmetric nondegenerate bilinear form
$\gamma\colon\mathfrak{g} \times \mathfrak{g} \rightarrow \mathbb{F}$. The super-skewsymmetry of $\gamma$ means that $\gamma(a,b) = -(-1)^{p(a)p(b)}\gamma(b,a)$ for $a,b \in \mathfrak{g}$ and $\gamma(a,b) = 0$ if $p(a) \neq p(b)$. The \emph{free superfermion Lie pseudoalgebra} is the left $H$-module
\begin{align*}
    L^\gamma_{\mathfrak{g}} := (H \otimes \mathfrak{g}) \oplus \mathbb{F}K, \quad\text{where}\quad p(K) = \bar{0}, \quad \mathfrak{d} K = 0,
\end{align*}
with the pseudobracket
\begin{align}\label{fermionbracket}
    [a*b] = (\gamma(a,b) \otimes 1) \otimes_H K, \quad [a*K]=0, \qquad a,b \in \mathfrak{g},
\end{align}
uniquely extended to $L^\gamma_{\mathfrak{g}}$ by $H$-bilinearity \eqref{hbil}. When $\mathfrak{g}$ is purely even, $L^\gamma_{\mathfrak{g}}$ is called the \emph{free fermion Lie pseudoalgebra}. 
The quotient Poisson pseudoalgebra
\begin{align*}
    F^\gamma_{\mathfrak{g}} := S(L^\gamma_{\mathfrak{g}}) / S(L^\gamma_{\mathfrak{g}}) (K-1) \cong S(H \otimes \mathfrak{g})
\end{align*}
is called the \emph{free superfermion Poisson pseudoalgebra}.
Its pseudobracket is given by \eqref{fermionbracket} with $K=1$ on the generators from $\mathfrak{g}$, and then is extended uniquely
by $H$-bilinearity \eqref{hbil} and the iterated Leibniz rule \eqref{iterleib}.
In the case when $\mathfrak{g}$ is purely odd, $F^\gamma_{\mathfrak{g}}$ is the exterior algebra in $\partial^I u_l$ where $I \in \mathbb{Z}_{\ge0}^N$ and $\{u_l\}$ is a basis for $\mathfrak{g}$.
\end{example}

\begin{example}[\textbf{Affine Poisson pseudoalgebra}]
Let $\mathfrak{g}$ be a Lie algebra with a nondegenerate invariant symmetric bilinear map $\beta\colon\mathfrak{g} \times \mathfrak{g} \rightarrow \mathfrak{d}$. 
The invariance of $\beta$ means that $\beta([a,b],c)=\beta(a,[b,c])$ for all $a,b,c\in\mathfrak{g}$.
The \emph{affine Lie pseudoalgebra} is defined as the purely even left $H$-module
\begin{align*}
    L^\beta_{\mathfrak{g}} := (H \otimes \mathfrak{g}) \oplus \mathbb{F}K, \quad\text{where}\quad \mathfrak{d} K = 0,
\end{align*}
with the pseudobracket $(a,b \in \mathfrak{g})$:
\begin{align}\label{affinebracket}
    [a*b] = (1 \otimes 1) \otimes_H [a,b] + (\beta(a,b) \otimes 1) \otimes_H K, \qquad [a*K]=0,
\end{align}
uniquely extended to $L^\beta_{\mathfrak{g}}$ by $H$-bilinearity \eqref{hbil}.
The quotient Poisson pseudoalgebra
\begin{align*}
    A^\beta_{\mathfrak{g}} := S(L^\beta_{\mathfrak{g}}) / S(L^\beta_{\mathfrak{g}}) (K-1) \cong S(H \otimes \mathfrak{g})
\end{align*}
is called the \emph{affine Poisson pseudoalgebra}.
Its pseudobracket is given by \eqref{affinebracket} with $K=1$ on the generators from $\mathfrak{g}$, and then is extended uniquely
by $H$-bilinearity \eqref{hbil} and the iterated Leibniz rule \eqref{iterleib}.
\end{example}

\begin{example}[\textbf{Poisson pseudoalgebra of type $W$}]\label{wmppa}
Let $\mathfrak{d}$ be a Lie algebra with a symmetric bilinear map $\beta\colon\mathfrak{d} \times \mathfrak{d} \rightarrow \mathfrak{d}$
such that $\beta([a,b],c)=\beta(a,[b,c])$ for $a,b,c\in\mathfrak{d}$.
Consider a central extension of the Lie pseudoalgebra $W(\mathfrak{d}) = H \otimes \mathfrak{d}$ from Example \ref{typew} by an even element $C$ with $\mathfrak{d} C = 0$
and the pseudobracket $(a,b \in \mathfrak{d})$:
\begin{align}\label{wmbracket}
    [(1 \otimes a)*(1 \otimes b)] &= (1 \otimes 1) \otimes_H (1 \otimes [a,b]) + (\beta(a,b) \otimes 1) \otimes_H C \notag \\
    &+(b \otimes 1) \otimes_H (1 \otimes a)-(1 \otimes a) \otimes_H(1 \otimes b).
\end{align}
Taking the quotient of its symmetric algebra by the ideal generated by $C-1$, we obtain the \emph{Poisson pseudoalgebra of type $W$}:
\begin{align*}
    P^\beta_W := S(W(\mathfrak{d}) \oplus \mathbb{F}C) / S(W(\mathfrak{d}) \oplus \mathbb{F}C)(C-1) \cong S(H \otimes \mathfrak{d}),
\end{align*}
with the pseudobracket \eqref{wmbracket} with $C=1$ on the generators from $\mathfrak{d}$, extended uniquely by $H$-bilinearity \eqref{hbil} and the iterated Leibniz rule \eqref{iterleib}.
\end{example}

\begin{example}[\textbf{Poisson pseudoalgebra of type $K$}]\label{kppa}
Let $\mathfrak{d}$ be the Heisenberg Lie algebra of dimension $N=2M+1$, with a basis $\partial_0,\partial_1,\dots,\partial_{2M}$ and 
\begin{align*}
[\partial_i,\partial_{M+i}] = - [\partial_{M+i},\partial_i] = \partial_0 \,, \qquad 1\le i\le M;
\end{align*}
all other brackets equal to zero. Then the free $H$-module $He$ has a Lie pseudoalgebra structure, given by \cite[Example 8.4]{BDK01}:
\begin{align*}
[e*e] &= \alpha \otimes_H e \,, \quad\text{where} \\
\alpha &:= 1\otimes \partial_0 - \partial_0 \otimes 1 + \sum_{i=1}^M \bigl( \partial_{i} \otimes \partial_{M+i} - \partial_{M+i} \otimes \partial_{i} \bigr).
\end{align*}
This Lie pseudoalgebra is denoted as $K(\mathfrak{d},\theta)$, where $\theta\in\mathfrak{d}^*$ is defined by $\theta(\partial_i) = \delta_{i,0}$.
There exists a central extension of $K(\mathfrak{d},\theta)$ by an even element $C$ with $\mathfrak{d} C = 0$
and the pseudobracket
\begin{align}\label{kmbracket}
[e*e] = \alpha \otimes_H e + (\partial_0 \otimes 1) \otimes_H C
\end{align}
(see the proof of \cite[Proposition 15.6]{BDK01}).
For any $c\in \mathbb{F}$, we have the \emph{Poisson pseudoalgebra of type $K$}:
\begin{align*}
    P^c_K := S(He \oplus \mathbb{F}C) / S(He \oplus \mathbb{F}C)(C-c) \cong S(He),
\end{align*}
with the pseudobracket \eqref{kmbracket} with $C=c$ on the generator $e$, extended uniquely by $H$-bilinearity \eqref{hbil} and the iterated Leibniz rule \eqref{iterleib}.
\end{example}

\begin{remark}
The central extensions of the Lie pseudoalgebras $W(\mathfrak{d})$ and $K(\mathfrak{d},\theta)$, described in Examples \ref{wmppa} and \ref{kppa}, are trivial
due to \cite[Theorem 15.2]{BDK01}. Nevertheless, it is still interesting to consider the corresponding Poisson pseudoalgebras $P^\beta_W$ and $P^c_K$, 
because any---trivial or not---central extension gives rise to \emph{compatible} Poisson pseudobrackets, which are essential for applications to bi-Hamiltonian systems (see e.g.\ \cite{BDK09}). Recall that, by definition, two Poisson pseudobrackets are called compatible if any linear combination of them is also a Poisson pseudobracket.
\end{remark}

\subsection{Cohomology of Poisson pseudoalgebras}\label{Poisson pseudoalgebra4coho}
In this section, we define two types of cohomology of Poisson pseudoalgebras. The first one is called the classical cohomology and the second is called the variational cohomology. For Poisson vertex algebras, these two types of cohomology have been defined and studied in \cite{DK13,BDHK19}. In particular, it was proved in \cite{BDHKV21} that
they are isomorphic when the Poisson vertex algebra, viewed as a differential algebra, is a finitely-generated algebra of differential polynomials.

Following the notation of Sect.\ \ref{Poisson pseudoalgebraConstruction}, for a left $H$-module $V$, an odd element $X \in W_{H,1}^{cl}(\Pi V)$ such that $[X, X] = 0$ defines on $V$ the structure of a Poisson $H$-pseudoalgebra, as in Theorem \ref{operadPoisson pseudoalgebra}. Since $\mathrm{ad}_X$ is odd, we have 
\begin{equation}\label{adx2}
2\,\mathrm{ad}_X^2 = [\mathrm{ad}_X,\mathrm{ad}_X] = \mathrm{ad}_{[X,X]} = 0.
\end{equation}
Thus, we obtain a cohomology complex 
$(W_H^{cl}(\Pi V), \mathrm{ad}_X)$,
whose cohomology is called the \emph{classical cohomology} of the Poisson pseudoalgebra $V$. 

The variational cohomology of $V$ can be defined following \cite[Sect.\ 11]{BDHK19} in the Poisson vertex algebra case.  
In order to do that, let us consider the operad $\mathcal{P}_H^*(\Pi V)$, which corresponds to the Lie $H$-pseudoalgebra structures on $V$
(cf.\ (\ref{p*hn}) and \cite{BDK01}):
\begin{align*}
\mathcal{P}_H^*(\Pi V)(n) = \mathrm{Hom}_{H^{\otimes n}} ((\Pi V)^{\otimes n}, H^{\otimes n} \otimes_H (\Pi V)) \,.
\end{align*}
This operad can be obtained by restricting $\mathcal{P}_H^{cl}(\Pi V)$ to graphs with no edges (cf.\ Remark \ref{pstarh}). 
Let 
\begin{align*}
    W^*_{H}(\Pi V) = W(\mathcal{P}^*_H(\Pi V)) = \bigoplus_{n \geq -1}W^*_{H,n}(\Pi V)
\end{align*} 
denote the universal Lie superalgebra associated to the operad $\mathcal{P}_H^*(\Pi V)$. 
Similarly to Theorem \ref{operadPoisson pseudoalgebra}, one can prove the following theorem, which is essentially the same as in the $\mathcal{C}hom$ case \cite{DK13, BDHK19}.

\begin{theorem}\label{operadLie pseudoalgebra}
 There is a bijective correspondence between odd elements $X^* \in W_{H,1}^{*}(\Pi V)$ 
 such that $[X^*,X^*]=0$ and Lie pseudoalgebra structures on $V$, given explicitly by:
\begin{align}\label{lpabracket}
[a*b] = (-1)^{p(a)}X^*(a \otimes b), \qquad a,b \in V.
\end{align}
\end{theorem}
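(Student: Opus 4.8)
The plan is to mirror the proof of Theorem \ref{operadPoisson pseudoalgebra}, but working in the suboperad $\mathcal{P}_H^*(\Pi V)$ of $\mathcal{P}_H^{cl}(\Pi V)$ obtained by retaining only the edgeless graphs (Remark \ref{pstarh}). Because every graph now under consideration has no edges, both cycle conditions are vacuous, and no product, Leibniz rule, or associativity can appear; only the data of a single bilinear map and the Jacobi identity survive.

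First I would unpack $W_{H,1}^*(\Pi V)$. An element $X^* \in \mathcal{P}_H^*(\Pi V)(2) = \mathrm{Hom}_{H^{\otimes 2}}((\Pi V)^{\otimes 2}, (H \otimes H) \otimes_H \Pi V)$ is, by the very definition of this operad, a single $H$-bilinear map, and being odd with respect to $\bar p$ renders the induced bracket $[a*b] := (-1)^{p(a)} X^*(a \otimes b)$ even with respect to $p$ and $H$-bilinear, giving \eqref{hbil}. The invariance $X^{*(12)} = X^*$ imposed by membership in $W_{H,1}^*$ can be evaluated on $a \otimes b$; tracking the Koszul--Quillen sign $(-1)^{\bar p(a)\bar p(b)}$ exactly as in the disconnected-graph computation of Theorem \ref{operadPoisson pseudoalgebra} yields the skewsymmetry \eqref{skewsym}. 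This establishes the first half of the correspondence and shows it is reversible.

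Next I would translate $[X^*, X^*] = 2\,X^* \square X^* = 0$ into the Jacobi identity. Using the $\square$-product formula \eqref{square1}, one has $X^* \square X^* = X^* \circ_1 X^* + X^* \circ_2 X^* + (X^* \circ_2 X^*)^{(12)}$, and I would evaluate this on $a \otimes b \otimes c \in V^{\otimes 3}$. Since $\mathcal{P}_H^*$ is the suboperad of $\mathcal{P}_H^{cl}$ supported on the edgeless graph, only the analogue of the graph $G_1$ contributes, and the $\circ_k$-products \eqref{circleatk} restricted to this graph reduce precisely to the pseudobracket compositions of Remark \ref{bracketcomp}. Thus the three summands become $(-1)^{p(b)}[[a*b]*c]$, $(-1)^{1+p(b)}[a*[b*c]]$, and $(-1)^{p(b)+p(a)p(b)}((\sigma\otimes\mathrm{id})\otimes_H\mathrm{id})[b*[a*c]]$, just as in the $G_1$ case of Theorem \ref{operadPoisson pseudoalgebra}. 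Setting their sum to zero is exactly \eqref{jacid}.

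The computation is essentially a restriction of one already carried out, so the main obstacle is not conceptual but bookkeeping: matching the $\circ$-product formula \eqref{circleatk} to the bracket-composition conventions \eqref{bracketcomp2}, \eqref{bracketcomp1}, and verifying that the signs produced by the $\bar p$-grading and the $S_3$-action collapse to the $p$-graded signs appearing in \eqref{jacid}. Because the edgeless restriction kills the graphs $G_2$ and $G_3$, the Leibniz-rule and associativity parts of the earlier proof never arise, so this theorem is strictly simpler than Theorem \ref{operadPoisson pseudoalgebra} and requires no genuinely new arguments.
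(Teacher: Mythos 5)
Your proposal is correct and is essentially the paper's own argument: the paper establishes this theorem by remarking that it follows in the same way as Theorem \ref{operadPoisson pseudoalgebra} (restricted to the edgeless suboperad $\mathcal{P}^*_H(\Pi V)\subset\mathcal{P}^{cl}_H(\Pi V)$ of Remark \ref{pstarh}), which is exactly the restriction you carry out, with $S_2$-invariance on the edgeless $2$-graph giving $H$-bilinearity and skewsymmetry, and $(X^*\square X^*)^{G_1}=0$ giving the Jacobi identity. There are no gaps; as you note, the graphs $G_2$ and $G_3$ play no role here, so the Leibniz and associativity parts of the earlier proof correctly drop out.
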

Comparing (\ref{lpabracket}) with (\ref{ppabrackets}), we have
$
    X^* = X\hspace{1mm}^{\begin{tikzpicture}
\tikzset{>={Latex[width=2mm,length=1mm]}}
\tikzstyle{vertex} =[circle, fill = black,inner sep=0pt,minimum size=1mm,scale=1.2]
\node[vertex](v1) at (1,0){};
\node[vertex](v2) at (1.35,0){};
\end{tikzpicture}}
$
in the case when $V$ is a Poisson pseudoalgebra.
For a Poisson pseudoalgebra $V$, we define
\begin{align*}
    W^{*,L}_{H}(\Pi V) = \bigoplus_{n \geq -1} W^{*,L}_{H,n}(\Pi V)
\end{align*}
to be the subspace of $W^*_{H}(\Pi V)$ consisting of all elements satisfying the following Leibniz rules:
\begin{align*}
    Y&(a_1 \otimes \dots \otimes b_ia_i \otimes \dots \otimes a_n) \notag \\  
    & = (-1)^{\bar{p}(b_i)(\bar{p}(Y)+\bar{p}(a_1)+ \dots +\bar{p}(a_{i-1}))} \, b_i \cdot_i Y(a_1 \otimes \dots \otimes a_i \otimes \dots \otimes a_n) \notag \\ 
    &+ (-1)^{\bar{p}(a_i)(\bar{p}(b_i)+\bar{p}(Y)+\bar{p}(a_1)+ \dots +\bar{p}(a_{i-1}))} \, a_i \cdot_i Y(a_1 \otimes \dots \otimes b_i \otimes \dots \otimes a_n),
\end{align*} 
for all $i=1, \dots, n$ and $a_1, \dots, a_n, b_i \in \Pi V$. Above we use the notation
\begin{align*}
    b &\cdot_i A = \sum_{j}(g_{j1} \otimes \dots \otimes g_{ji(1)} \otimes \dots \otimes g_{jn}) \otimes_H (g_{ji(-2)}b)v_j, 
\end{align*}
for any $b\in V$ and $A\in H^{\otimes n} \otimes_H V$, written in the form
\begin{align*}
    A = \sum_j (g_{j1} \otimes \dots \otimes g_{jn}) \otimes_H v_j \,, \qquad v_j\in V. 
\end{align*}

\begin{example}
One has:
\begin{align*}
    W^{*,L}_{H,-1}(\Pi V) &= \Pi V/H_+(\Pi V) = W^*_{H,-1}(\Pi V), \\
    W^{*,L}_{H,0}(\Pi V) &= \mathrm{Der}_H(\Pi V) \subset \mathrm{End}_H(\Pi V) = W^*_{H,0}(\Pi V),
\end{align*}
where $\mathrm{Der}_H(\Pi V)$ is the set of all $Y \in\mathrm{End}_H(\Pi V)$ such that
\begin{align*}
    Y(ab) = Y(a) b + (-1)^{\bar{p}(a)\bar{p}(Y)}a \, Y(b) 
\end{align*}
for all $a,b \in \Pi V$.
\end{example}

We will prove below that $W^{*,L}_{H}(\Pi V)$ is a subalgebra of the Lie superalgebra $W^*_{H}(\Pi V)$.
Note that
\begin{align*}
X^* = X\hspace{1mm}^{\begin{tikzpicture}
\tikzset{>={Latex[width=2mm,length=1mm]}}
\tikzstyle{vertex} =[circle, fill = black,inner sep=0pt,minimum size=1mm,scale=1.2]
\node[vertex](v1) at (1,0){};
\node[vertex](v2) at (1.35,0){};
\end{tikzpicture}}
\in W^{*,L}_{H,1}(\Pi V),
\end{align*}
due to the left and right Leibniz rules \eqref{leibniz}, \eqref{leibnizr}.
Since $X^*$ is odd and $[X^*,X^*]=0$, it follows that $\mathrm{ad}_{X^*}^2=0$ (cf.\ \eqref{adx2}).
The cohomology of the complex $(W^{*,L}_H(\Pi V), \mathrm{ad}_{X^*})$ 
is called the \emph{variational cohomology} of the Poisson pseudoalgebra $V$. 

We note that the operad $\mathcal{P}^{cl}_H(\Pi V)$ is graded:
\begin{align}\label{pclh_grading}
    \mathcal{P}^{cl}_H(\Pi V) = \bigoplus_{r\ge0}\mathrm{gr}^r\,\mathcal{P}^{cl}_H(\Pi V),
\end{align}
where $\mathrm{gr}^r \,\mathcal{P}^{cl}_H(\Pi V)$ consists of all elements $Y$ such that
\begin{align*}
    Y^\Gamma = 0 \quad \text{whenever} \quad |E(\Gamma)| \neq r,
\end{align*}
and $|E(\Gamma)|$ denotes the number of edges of the graph $\Gamma$. Then 
\begin{align*}
    \mathrm{gr}^r \,\mathcal{P}^{cl}_H(\Pi V) \circ_k \mathrm{gr}^s \,\mathcal{P}^{cl}_H(\Pi V) \subset \mathrm{gr}^{r+s} \,\mathcal{P}^{cl}_H(\Pi V),
\end{align*}
due to Lemma \ref{edgecorrespondence}. 
The grading \eqref{pclh_grading} induces a Lie superalgebra grading
\begin{align*}
    W^{cl}_H(\Pi V) = \bigoplus_{r\ge0} \mathrm{gr}^r \,W^{cl}_H(\Pi V),
\end{align*}
so that
\begin{align*}
    \bigl[\mathrm{gr}^r \,W^{cl}_H(\Pi V), \mathrm{gr}^s \,W^{cl}_H(\Pi V)\bigr] \subset \mathrm{gr}^{r+s} \,W^{cl}_H(\Pi V).
\end{align*}

Any $f \in W^{cl}_{H,n}(\Pi V)$ can be written as a finite sum
\begin{align*}
    f = \sum_{r = 0}^n f_r \,,
\qquad\text{where}\quad
    f_r^{\Gamma} = \begin{cases} 
      f^{\Gamma} \,, & \text{if } \;  |E(\Gamma)| = r, \\
      0 \,,& \text{otherwise.} 
   \end{cases}
\end{align*}
In particular, for an odd element $X \in W_{H,1}^{cl}(\Pi V)$ such that $[X, X] = 0$, one can write:
\begin{align*}
    X = X_0 + X_1,
\end{align*}
with $X_0$, $X_1$ homogeneous of degree $0$ and $1$, respectively. Moreover,
\begin{align*}
    [X_0, X_0] = [X_1, X_1] = [X_0, X_1] = 0,
\end{align*}
which imply (cf.\ \eqref{adx2}):
\begin{align*}
\mathrm{ad}_{X_0}^2 = \mathrm{ad}_{X_1}^2 = 0, \qquad
\mathrm{ad}_{X_0}\mathrm{ad}_{X_1} = -\mathrm{ad}_{X_1}\mathrm{ad}_{X_0}.
\end{align*}
The following result can be obtained using essentially the same proof as in the Poisson vertex algebra case (see \cite[Lemmas 11.2, 11.3]{BDHK19}).
\begin{lemma}\label{variationaldiff}
    There exists a natural Lie superalgebra isomorphism
        \begin{align}\label{isophi}
            \phi\colon W^*_{H}(\Pi V) \rightarrow \mathrm{gr}^0 W^{cl}_H(\Pi V), \quad \phi(f^*) = f,
        \end{align}   
    where $f$ is defined by
        \begin{align*}
            f^{\Gamma} = \begin{cases} 
                                f^* \,, & \text{if } \;  |E(\Gamma)| = 0, \\
                                0 \,,& \text{if } \;  |E(\Gamma)| > 0. 
                         \end{cases}
        \end{align*}
    Moreover, for $f^* \in W^{*}_{H}(\Pi V)$, $f=\phi(f^*)$, and an odd element $X \in W_{H,1}^{cl}(\Pi V)$ such that $[X, X] = 0$, we have$:$
    \begin{enumerate}
        \item $\mathrm{ad}_X f = 0  \;\;\Leftrightarrow\;\; \mathrm{ad}_{X_0} f = \mathrm{ad}_{X_1} f = 0;$
        \item $\mathrm{ad}_{X_0} f = 0 \;\Leftrightarrow\; \mathrm{ad}_{X^*} f^* = 0;$
        \item $\mathrm{ad}_{X_1} f = 0 \;\Leftrightarrow\; f^* \in W^{*,L}_{H}(\Pi V).$
    \end{enumerate}
    As a consequence,
        \begin{align*}
            f \in \mathrm{Ker} (\mathrm{ad}_X) \;\Leftrightarrow\; f^* \in \mathrm{Ker}\bigl(\mathrm{ad}_{X^*}\big|_{W^{*,L}_H(\Pi V)}\bigr).
        \end{align*}
\end{lemma}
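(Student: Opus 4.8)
The plan is to realize $\phi$ as the identification of $\mathcal{P}^*_H(\Pi V)$ with the bottom graded piece $\mathrm{gr}^0\mathcal{P}^{cl}_H(\Pi V)$, and then to use the edge-grading on $W^{cl}_H(\Pi V)$ to split $\mathrm{ad}_X f$ into its degree-$0$ and degree-$1$ homogeneous parts, handling each by one of the statements (2) and (3).

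First I would construct $\phi$ and verify it is an isomorphism of Lie superalgebras. By Remark \ref{pstarh}, $\mathcal{P}^*_H(\Pi V)$ is realized inside $\mathcal{P}^{cl}_H(\Pi V)$ as the suboperad of those $Y$ with $Y^\Gamma=0$ on every graph carrying an edge, and this suboperad is by definition $\mathrm{gr}^0\mathcal{P}^{cl}_H(\Pi V)$. For the unique edgeless $n$-graph, whose $n$ vertices are its connected components, the componentwise $H$-linearity \eqref{complinear} is exactly the $H^{\otimes n}$-linearity defining $\mathcal{P}^*_H$, the two cycle conditions are vacuous, and on edgeless graphs the $\circ_k$-product \eqref{circleatk} collapses (no external connections, so each $f_{ik(-2)}$ is removed by the counit) to the composition of $\mathcal{P}^*_H$. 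Since the symmetric-group actions preserve the number of edges, taking invariants commutes with the grading, so $\mathrm{gr}^0 W^{cl}_H(\Pi V)=W(\mathrm{gr}^0\mathcal{P}^{cl}_H(\Pi V))$; functoriality of the universal Lie superalgebra then promotes the operad isomorphism to the Lie superalgebra isomorphism $\phi$.

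Next I would dispose of the formal claims (1) and (2). For (1), since $f=\phi(f^*)\in\mathrm{gr}^0$, the grading gives $\mathrm{ad}_{X_0}f\in\mathrm{gr}^0$ and $\mathrm{ad}_{X_1}f\in\mathrm{gr}^1$; these are the distinct homogeneous components of $\mathrm{ad}_X f=\mathrm{ad}_{X_0}f+\mathrm{ad}_{X_1}f$, so the sum vanishes iff both do. For (2), I observe that under $\phi$ the element $X^*$ corresponds precisely to $X_0$: by the identification of $X^*$ with the value of $X$ on the edgeless $2$-graph recorded after Theorem \ref{operadLie pseudoalgebra}, and since for $2$-graphs ``edgeless'' coincides with ``disconnected'', we get $\phi(X^*)=X_0$. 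As $\phi$ is a Lie superalgebra isomorphism, $\mathrm{ad}_{X_0}f=\phi(\mathrm{ad}_{X^*}f^*)$, and injectivity of $\phi$ gives the equivalence.

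The hard part will be (3), namely $\mathrm{ad}_{X_1}f=0\Leftrightarrow f^*\in W^{*,L}_H(\Pi V)$. Here $X_1$ is the degree-one part of $X$, which by Theorem \ref{operadPoisson pseudoalgebra} encodes the commutative product of $V$. I would evaluate $[X_1,f]=X_1\square f-(-1)^{p(X_1)p(f)}f\square X_1$ on the graphs $\Gamma_i$ carrying a single edge that feeds the $i$-th argument of $f$, reducing to these canonical one-edge graphs by the symmetric-group invariance of $[X_1,f]$ and the fact that it lies in $\mathrm{gr}^1$. Because $f$ vanishes on every graph with an edge, expanding the $\square$- and $\circ_k$-products \eqref{square1}, \eqref{circleatk} leaves only the terms in which the single edge is absorbed into the multiplication $X_1$; on $\Gamma_i$ these assemble into the value $Y(a_1\otimes\dots\otimes b_ia_i\otimes\dots\otimes a_n)$ on one side and the derivation terms $b_i\cdot_i Y(a_1\otimes\dots\otimes a_i\otimes\dots)$ and $a_i\cdot_i Y(a_1\otimes\dots\otimes b_i\otimes\dots)$ on the other, the $\cdot_i$-actions coming from external connectedness, so that $[X_1,f]^{\Gamma_i}=0$ is exactly the $i$-th Leibniz identity. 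I expect the main obstacle to be precisely this unwinding: tracking the iterated-coproduct $H$-actions generated by external connectedness so that they match the operation $b\cdot_i A$, together with the Koszul--Quillen signs for the reversed parity $\bar p$. Finally, combining (1)--(3) shows $f\in\mathrm{Ker}(\mathrm{ad}_X)$ iff $\mathrm{ad}_{X^*}f^*=0$ and $f^*\in W^{*,L}_H(\Pi V)$, i.e.\ $f^*\in\mathrm{Ker}(\mathrm{ad}_{X^*}|_{W^{*,L}_H(\Pi V)})$, where the restriction makes sense because $X^*\in W^{*,L}_{H,1}(\Pi V)$ and $W^{*,L}_H(\Pi V)$ is a subalgebra.
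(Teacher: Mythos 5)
Your proposal is correct and takes essentially the same approach as the paper, which proves this lemma simply by citing \cite[Lemmas 11.2, 11.3]{BDHK19} and asserting that the Poisson vertex algebra proof carries over to general cocommutative $H$. Your argument—identifying $\mathcal{P}^*_H(\Pi V)$ with $\mathrm{gr}^0\,\mathcal{P}^{cl}_H(\Pi V)$ via Remark \ref{pstarh} and functoriality of $W(\cdot)$, using the edge-grading for claim (1), the identity $\phi(X^*)=X_0$ for claim (2), and evaluation of $[X_1,f]$ on one-edge graphs (where vanishing of $f$ on graphs with edges leaves exactly the terms assembling into the Leibniz identity, with the $\cdot_i$-actions arising from external connectedness) for claim (3)—is precisely that proof, carried out in the pseudoalgebra setting.
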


The following theorem follows directly from Lemma \ref{variationaldiff}. Its proof is the same as in the Poisson vertex algebra case in \cite[Sect.\ 11.2]{BDHK19}.
\begin{theorem}
    The isomorphism $\phi$ in \eqref{isophi} restricts to an embedding of Lie superalgebras $W^{*,L}_H(\Pi V) \hookrightarrow W^{cl}_H(\Pi V)$.
    This gives an embedding of the variational complex $(W^{*,L}_H(\Pi V), \mathrm{ad}_{X^*})$ as a subcomplex of the classical complex $(W^{cl}_H(\Pi V), \mathrm{ad}_X)$,
    and of the variational cohomology into the classical cohomology.
\end{theorem}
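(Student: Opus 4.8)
The plan is to deduce the theorem from Lemma~\ref{variationaldiff} together with the edge-number grading \eqref{pclh_grading}, writing $f=\phi(f^*)$ for $f^*\in W^*_H(\Pi V)$ and $X=X_0+X_1$ with $X_r$ homogeneous of edge-degree $r$.

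First I would verify that $W^{*,L}_H(\Pi V)$ is a Lie subalgebra of $W^*_H(\Pi V)$, which is what makes $\phi$ restrict to a \emph{Lie} embedding. By part~(3) of Lemma~\ref{variationaldiff}, an element $f^*$ lies in $W^{*,L}_H(\Pi V)$ precisely when $\mathrm{ad}_{X_1}\phi(f^*)=0$; thus $W^{*,L}_H(\Pi V)=\phi^{-1}\!\bigl(\ker\mathrm{ad}_{X_1}\cap\mathrm{gr}^0 W^{cl}_H(\Pi V)\bigr)$. Since $X_1$ is odd, $\mathrm{ad}_{X_1}$ is an odd derivation of the bracket, so $\ker\mathrm{ad}_{X_1}$ is a subalgebra; intersecting with the subalgebra $\mathrm{gr}^0 W^{cl}_H(\Pi V)$ and pulling back along the Lie isomorphism $\phi$ shows that $W^{*,L}_H(\Pi V)$ is a subalgebra. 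Hence $\phi$ restricts to an injective homomorphism of Lie superalgebras $W^{*,L}_H(\Pi V)\hookrightarrow \mathrm{gr}^0 W^{cl}_H(\Pi V)\subseteq W^{cl}_H(\Pi V)$, which is the first assertion.

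Next I would check that this restriction is a morphism of complexes. As $\phi$ is a Lie isomorphism carrying $X^*$ to the edge-degree-$0$ component $X_0$ of $X$, it intertwines $\mathrm{ad}_{X^*}$ with $\mathrm{ad}_{X_0}$. For $f^*\in W^{*,L}_H(\Pi V)$, part~(3) gives $\mathrm{ad}_{X_1}f=0$, so
\[
\mathrm{ad}_X f=\mathrm{ad}_{X_0}f+\mathrm{ad}_{X_1}f=\mathrm{ad}_{X_0}f=\phi(\mathrm{ad}_{X^*}f^*).
\]
Because $X^*\in W^{*,L}_H(\Pi V)$ and the latter is a subalgebra, $\mathrm{ad}_{X^*}f^*$ again lies in $W^{*,L}_H(\Pi V)$, so $\mathrm{ad}_X$ preserves $\phi(W^{*,L}_H(\Pi V))$ and $\phi$ is an injective morphism of complexes $(W^{*,L}_H(\Pi V),\mathrm{ad}_{X^*})\hookrightarrow(W^{cl}_H(\Pi V),\mathrm{ad}_X)$. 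This realizes the variational complex as a subcomplex of the classical one.

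It remains to see that the induced map on cohomology is injective, and this is the main obstacle; here I would follow \cite[Sect.~11.2]{BDHK19}. The consequence of Lemma~\ref{variationaldiff} already matches variational cocycles with the classical cocycles lying in $\mathrm{gr}^0$, so the difficulty is entirely at the level of coboundaries: one must show that a variational cocycle $f=\phi(f^*)$ which equals $\mathrm{ad}_X g$ for some $g\in W^{cl}_H(\Pi V)$ is already $\mathrm{ad}_{X^*}$-exact inside $W^{*,L}_H(\Pi V)$. Decomposing $g=\sum_{r\ge0}g_r$ by edge-degree, the equation $f=\mathrm{ad}_X g$ splits into $f=\mathrm{ad}_{X_0}g_0$ and $\mathrm{ad}_{X_0}g_r+\mathrm{ad}_{X_1}g_{r-1}=0$ for $r\ge1$; the task is to adjust $g_0$ by an $\mathrm{ad}_{X_0}$-exact term so that it lands in $\ker\mathrm{ad}_{X_1}=\phi(W^{*,L}_H(\Pi V))$ without altering $f$. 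Here one exploits that the edge-grading on $W^{cl}_{H,n}(\Pi V)$ is bounded (acyclic graphs are forests, so $0\le r\le n$) together with the anticommutation $\mathrm{ad}_{X_0}\mathrm{ad}_{X_1}=-\mathrm{ad}_{X_1}\mathrm{ad}_{X_0}$, which lets the associated filtration spectral sequence terminate after finitely many steps and yields the required correction by a finite diagram chase. Granting this, the induced map $\iota_*$ is injective, and the variational cohomology embeds into the classical cohomology, completing the proof.
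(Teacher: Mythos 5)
Your first two paragraphs are correct and coincide with the paper's own route: by Lemma \ref{variationaldiff}(3), $\phi(W^{*,L}_H(\Pi V)) = \mathrm{gr}^0\, W^{cl}_H(\Pi V) \cap \mathrm{Ker}\,\mathrm{ad}_{X_1}$ is an intersection of subalgebras, and since $\phi(X^*)=X_0$ and $\mathrm{ad}_{X_1}$ vanishes on this subspace, $\mathrm{ad}_X$ restricts there to $\phi\circ\mathrm{ad}_{X^*}\circ\phi^{-1}$, giving both the Lie superalgebra embedding and the subcomplex embedding. The genuine gap is in your third paragraph. The correction you need is an element $u\in\mathrm{Ker}\,\mathrm{ad}_{X_0}\cap\mathrm{gr}^0$ with $\mathrm{ad}_{X_1}u=\mathrm{ad}_{X_1}g_0=-\mathrm{ad}_{X_0}g_1$, and its existence is a nontrivial exactness statement that cannot be deduced from the ingredients you invoke (boundedness of the edge grading, $\mathrm{ad}_{X_0}^2=\mathrm{ad}_{X_1}^2=\mathrm{ad}_{X_0}\mathrm{ad}_{X_1}+\mathrm{ad}_{X_1}\mathrm{ad}_{X_0}=0$), nor from Lemma \ref{variationaldiff} itself. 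Indeed, all of these hold in an arbitrary bounded bigraded complex with a degree-preserving differential $d_0$ and a degree-raising differential $d_1$, and there the desired injectivity is false: take $W_{-1}$ with basis $g_0$ (edge degree $0$), $g_1$ (edge degree $1$), take $W_0$ with basis $a$ (degree $0$), $b$ (degree $1$), set $W_n=0$ for $n\ge1$, and let $d_0g_0=a$, $d_0g_1=-b$, $d_1g_0=b$, with all other values zero. Every formal consequence of Lemma \ref{variationaldiff} used in your argument holds in this model (with ``variational complex'' $\mathrm{Ker}\,d_1\cap\mathrm{gr}^0$ and $\phi$ the identity); the element $a$ is a variational cocycle and is the classical coboundary $(d_0+d_1)(g_0+g_1)$, yet the variational complex vanishes in degree $-1$, so $[a]\neq0$ in variational cohomology. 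Hence the ``finite diagram chase'' you describe does not exist: the obstruction is a possibly nonzero class on the first page of your spectral sequence, and termination of the sequence is irrelevant.

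Accordingly, you are trying to prove more than the paper's proof actually delivers. What follows from Lemma \ref{variationaldiff} --- and what the cited argument of \cite[Sect.\ 11.2]{BDHK19} establishes --- is exactly your first two steps: the variational complex embeds as a subcomplex of the classical complex, whence a canonical induced map in cohomology. Injectivity of this map (let alone the isomorphism) is a substantive theorem requiring additional hypotheses on $V$; in the Poisson vertex algebra case $H=\mathbb{F}[\partial]$ it is the main result of \cite{BDHKV21}, valid for algebras of differential polynomials, and the introduction of the present paper explicitly places this comparison beyond its scope. So your instinct that injectivity in cohomology is ``the main obstacle'' is mathematically sound, but the argument you sketch for it would fail, and no purely formal argument of that type can exist.
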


\end{document}